\numberwithin{equation}{section}    % Gleichungen mit Sections nummeriert
\theoremstyle{plain}
\newtheorem{Theorem}{Theorem}[section]
\newtheorem{Proposition}[Theorem]{Proposition}
\newtheorem{Corollary}[Theorem]{Corollary}
\newtheorem{Lemma}[Theorem]{Lemma}
\theoremstyle{definition}
\newtheorem{Definition}[Theorem]{Definition}
\theoremstyle{remark}
\newtheorem{Remark}[Theorem]{Remark}
\newcommand{\RR}{\mathbb{R}}
\newcommand{\CC}{\mathbb{C}}
\newcommand{\NN}{\mathbb{N}}
\newcommand{\TT}{\mathbb{J}}
\newcommand{\ZZ}{\mathbb{Z}}
\newcommand{\graph}{\mbox{gr}}
\newcommand{\Graph}{\mbox{Gr}}
\newcommand{\rank}{\mbox{rk}}
\newcommand{\complexity}{\mathcal{C}}
\newcommand{\Hmm}[1]{\leavevmode{\marginpar{\tiny%
$\hbox to 0mm{\hspace*{-0.5mm}$\leftarrow$\hss}%
\vcenter{\vrule depth 0.1mm height 0.1mm width \the\marginparwidth}%
\hbox to 0mm{\hss$\rightarrow$\hspace*{-0.5mm}}$\\\relax\raggedright
#1}}}
\begin{document}

\title[Schreier graphs of Grigorchuk's group and the  subshift of $\mathcal{T}$ ]{Schreier graphs of
Grigorchuk's group and  a subshift associated to a non-primitive
substitution}

\author{Rostislav Grigorchuk}
\address{Mathematics Department, Texas A\&M University, College Station, TX 77843-3368, USA} \email{grigorch@math.tamu.edu}

\author{Daniel Lenz}
\address{Mathematisches Institut \\Friedrich Schiller
Universit{\"a}t Jena \\07743 Jena, Germany }
\email{daniel.lenz@uni-jena.de}

\author{Tatiana Nagnibeda}
\address{Section de Math\'{e}matiques, Universit\'{e} de Gen\`{e}ve, 2-4, Rue du
Li\`{e}vre, Case Postale 64 1211 Gen\`{e}ve 4, Suisse}
\email{Tatiana.Smirnova-Nagnibeda@unige.ch}

\keywords{substitutional subshift, self-similar group, Schreier
graph, Laplacian, spectrum of Schr\"odinger operators}
\date{\today}

\begin{abstract}
There is a recently discovered connection between  the spectral
theory of Schr\"o-dinger operators whose potentials exhibit
aperiodic order and  that of Laplacians associated with actions of
groups on regular rooted trees, as  Grigorchuk's group of
intermediate growth. We give an overview of corresponding results,
such as different spectral types in the isotropic and anisotropic
cases, including Cantor spectrum of Lebesgue measure zero and
absence of eigenvalues. Moreover, we discuss the relevant background
as well as the combinatorial and dynamical tools that allow one  to
establish the afore-mentioned connection. The main such tool is the
subshift associated to a substitution over a finite alphabet that
defines the group algebraically  via a recursive presentation by
generators and relators.

%We compute the word complexity of  the subshift, show that its index
%(supremum over occurring powers) is four and that it is an almost
%everywhere one-to-one extension of its maximal equicontinuous
%factor, which is  the binary odometer. We also use the connection to
%establish in the concrete situation  equality of the integrated
%density of states and the von-Neumann-Kesten-Serre trace introduced
%by Bartholdi / Grigorchuk.
\end{abstract}

\maketitle

%\begin{center}
%{\footnotesize{\bf Mathematics Subject Classification (2010):}
%82B20, 05A15, 20E08.\footnote{This research has been supported by
%the Swiss National Science Foundation.}}
%\end{center}

\tableofcontents

\unitlength=0,4mm \textwidth = 16.00cm \textheight = 22.00cm
\oddsidemargin= 0.12in \evensidemargin = 0.12in
\setlength{\parindent}{8pt} \setlength{\parskip}{5pt plus 2pt minus
1pt} \setloopdiam{10}\setprofcurve{7}

\section*{Introduction}
%In the recent paper \cite{GLN}, we addressed the question of
%dependence of the Laplacian spectrum on Schreier graphs of a group
%with respect to a finite generating set on the weights put on the
%generators, on the example of Grigorchuk's group of intermediate
%growth.
The study of spectra of graphs associated with finitely generated
groups, such as Cayley graphs or Schreier graphs (natural analogues
of Cayley graphs associated to not necessarily free group actions) has a long history and is related to many
problems in modern mathematics. Still, very little is known about
the dependence of the spectrum of the Laplacian on the choice of generators in the group and on the weight on these generators. In the
recent paper \cite{GLN}  we addressed  the  issue of dependence on the weights on generators in the example of Grigorchuk's group of intermediate growth.
More specifically, we determined the spectral type of the Laplacian
on the Schreier graphs describing the action of Grigorchuk's group
on the boundary of the infinite binary tree and showed that it is
different in the isotropic and anisotropic cases. In fact, the
spectrum is shown to be a Cantor set of Lebesgue measure zero in the
anisotropic case, whereas, as has been known for a long time, it consists of one or  two intervals in
the isotropic case. Moreover, we showed almost surely (with respect to
a natural measure on the boundary of the tree) the absence of eigenvalues for the Laplacians in
question.

Our investigation in \cite{GLN} provides (and relies on)  a
surprising link between discrete Schr\"odinger operators with
aperiodic order and the substitutional dynamics arising from a
presentation of the group by generators and relators.

The purpose of the present paper is two-fold. Firstly, we want to
survey  the  spectral theoretic results of \cite{GLN}. Secondly, we
want to put these results  in wider perspective by discussing the
background on aperiodic order in dimension one. In this context we
present a discussion of subshifts and aperiodic order in Section
\ref{Section-Subshifts} and of  Schr\"odinger operators arising in
models of (dis)ordered solids  in Section \ref{Schroedinger}.

 We also continue our study of the substitution that is
instrumental for the results in \cite{GLN}. It first appeared in
1985 in the presentation of Grigorchuk's group by generators and
relators found by Lysenok \cite{Lys} (such infinite recursive
presentations are now called L-presentations). A remarkable fact
 is that the same substitution also
describes basic dynamical properties of the group. Here we review
its  combinatorial properties and  carry out  a  detailed study of
the factor map to its maximal equicontinuous factor, which is the
binary odometer. This factor map is proven  to be one-to-one
everywhere except on three orbits.  This in turn can be  linked to
the phenomenon of pure point diffraction which is at the core of
aperiodic order.
%We also discuss  how to generate
%the fixed point of the substitution by an automaton.
%Moreover, we
%discuss how to express the subshift by a different substitution,
%which is primitive.
All these discussions concerning the substitution are contained in
Section \ref{The-substitution}.

The necessary background from graphs and dynamical systems is
discussed in Section \ref{Section-Background-Graphs-etc} and basics
on Grigorchuk's group and its Schreier graphs can be found in
Section \ref{Grigorchuk}.

The connection between Schr\"odinger operators and Laplacians on
Schreier graphs  revealed in \cite{GLN} (and reviewed in Sections
\ref{Connecting} and  \ref{Section-Spectral}) can also be used to
show that the Kesten-von-Neuman trace and the integrated density of
states agree. This seems to be somewhat folklore. We provide a proof
in Section \ref{IDS-Schreier}.

Our approach can certainly be carried out for various further
families of groups generalizing  Grigorchuk's group. In particular,
our results fully extend to a larger family of self-similar groups acting on the infinite binary tree considered by Sunic  in \cite{Sun}, as well as to similar families of self-similar groups acting on regular rooted trees of arbitrary degrees that also have linear Schreier graphs for their action on the boundary of the tree. Moreover, each of these self-similar groups belongs to an uncountable family of groups parametrized by sequences in a certain finite alphabet, in the same way as Grigorchuk's group belongs to the family of groups $(G_\omega)_{\omega\in\{0,1,2\}^{\bf N}}$ constructed by the first named author in \cite{Gri84}. The Schreier graphs of the groups in the same family look the same, but their labelling by generators, and thus their spectra, depend on the sequence $\omega$. The associated subshifts are also different, in particular, they don't have to come from a substitution in the case when the corresponding group is not self-similar. This more general setup will be considered in a separate paper.

\textbf{Acknowledgments.} R. G. was partially supported by the NSF
grant DMS-1207669,  by ERC AG COMPASP and by the  NSA grant
H98230-15-1-0328. The authors acknowledge support of the Swiss
National Science Foundation. Part of this research was carried out
while D.L. and R. G. were visiting the Department of mathematics of
the University of Geneva. The hospitality of the department is
gratefully acknowledged. The authors also thank Fabien Durand and
Ian Putnam for most enlightening discussions concerning  the
material gathered in Section \ref{section-recoding},  Yaroslav
Vorobets for allowing them to use his figures $3$ and $4$ and Olga
Klimecki for help in preparing figure $1$. Finally, the authors
would like to thank the anonymous referee for a very  careful
reading of the manuscript and several helpful suggestions.

\section{Subshifts and aperiodic order in one dimension
}\label{Section-Subshifts}
 Long range aperiodic order (or aperiodic
order for short)  denotes an intermediate regime of order between
periodicity and randomness. It has received a lot of attention over
the last thirty years or so, see e.g. the article collections and
monographs \cite{BGr,BM,KLS,Moody,Sen}.

This interest in aperiodic order  has various reasons. On the one
hand it is due  to the many remarkable and previously unknown
features and phenomena arising from aperiodic order in various
branches of mathematics. On the other hand, it is also due to the
relevance of aperiodic order in physics and chemistry. Indeed,
aperiodic order provides a mathematical theory for the description
of a new type of matter discovered in 1982 by Shechtman via
diffraction experiments \cite{SBGC}. These experiments showed sharp
peaks in the diffraction pattern indicating long range order and at
the same time a five fold symmetry indicating absence of
periodicity. The discovery of solids combining both long range order
with aperiodicity came as a complete surprise to physicists and
chemists and  Shechtman was honored with a  Nobel prize in 2011. By
now, the solids in question are known as quasicrystals.

In one dimension aperiodic order  is commonly modeled by subshifts
of low complexity. In higher dimensions it is modeled by dynamical
systems consisting of point sets with suitable regularity features
(which are known as Delone dynamical systems). Here, we present the
necessary notation in order to deal with the one-dimensional
situation. \textbf{When  we speak about aperiodic order
subsequently,  this will always mean that we have a  subshift  with
suitable minimality features  at our disposal.}

Let a finite set $\mathcal{A}$ be given. We call $\mathcal{A}$ the
\textit{alphabet} and refer to its elements as \textit{letters}. We
will consider the set $\mathcal{A}^\ast$ of finite words (including
the empty word) over the alphabet $\mathcal{A}$, viewed as a free
monoid (with the multiplication given by concatenation of words and
the empty word representing the identity). Elements of
$\mathcal{A}^\ast$ will often be written as $w = w_1\ldots w_n$ with
$w_j\in \mathcal{A}$. The length of a word is the number of its
letters. It will be denoted by $|\cdot|$. The empty word has length
zero. Then,  $\mathcal{A}^\ZZ$ denotes the set of functions from
$\ZZ$ to $\mathcal{A}$. We think of the elements of
$\mathcal{A}^\ZZ$ as of bi-infinite words over the alphabet
$\mathcal{A}$. The set $\mathcal{A}^\NN$ denotes the set of
functions from $\NN$ to $\mathcal{A}$. We think of its elements of
one-sided infinite words over $\mathcal{A}$. They will often be
denoted by $\xi = \xi_1\xi_2\ldots...$.

 If $v,w$ are finite words and $\omega\in \mathcal{A}^\ZZ$ satisfies
$$ \omega_1 \ldots \omega_{|v|} = v\:\; \mbox{and}\:\:
\omega_{-|w|+1}\ldots \omega_0 = w$$ we write
$$\omega = ... w|v...$$
and say that $|$ \textit{denotes the position of the origin}.

We   equip $\mathcal{A}$ with the  discrete topology and
$\mathcal{A}^{\ZZ}$ with the  product topology. By the Tychonoff
theorem, $\mathcal{A}^\ZZ$ is then compact. In fact, it is
homeomorphic to the Cantor set.  A pair $(\varOmega,T)$ is called a
\textit{subshift} over $\mathcal{A}$ if $\varOmega$ is a closed
subset of $\mathcal{A}^{\ZZ}$ which is invariant under the
\textit{shift transformation}
$$T :
\mathcal{A}^{\ZZ} \longrightarrow \mathcal{A}^{\ZZ}, \;\: (T \omega)
(n):= \omega (n+1).$$

If there exists a natural number $N\neq 0 $ with $T^N \omega =
\omega$ for all $\omega\in \varOmega$ then $(\varOmega,T)$ is called
\textit{periodic} otherwise it is called \textit{non-periodic}.

Whenever $\omega$ is a word over $\mathcal{A}$ (finite or infinite,
indexed by $\NN$ or by $\ZZ$) we define
$$\mbox{Sub} (\omega) := \mbox{Finite subwords of $\omega$}.$$
By convention, the set of finite subwords includes the empty word.
Every subshift  $(\Omega,T)$ comes naturally with the set
$\mbox{Sub} (\Omega)$ of associated finite words given by

 $$ \mbox{Sub} (\Omega)
:= \bigcup_{\omega\in \Omega} \mbox{Sub}(\omega).$$ A word $v \in
\mbox{Sub} (\Omega) $ is said to \textit{occur with bounded gaps} if
there exists an $L_v >0$ such that every $w\in \mbox{Sub} (\Omega)$
with $|w|\geq L_v$ contains a copy of $v$. As is well known (and not
hard to see)  $(\Omega,T)$ is minimal  if and only if every $v\in
\mbox{Sub} (\Omega)$ occurs with bounded gaps. For proofs and
further discussion we refer to standard textbooks such as
\cite{LindM,Wal}. We will be concerned here with the following
strengthening of the bounded gaps condition. It concerns the case
that $L_v$ can be chosen as $C |v|$ with  fixed $C$ (independent of
$v$).

\begin{Definition}[Linearly repetitive]  A subshift $(\varOmega,T)$ is
called \textit{linearly repetitive} (LR), if there exists a constant
$C>0$ such that any word $v \in \mbox{Sub}(\varOmega)$  occurs  in
any word $w\in \mbox{Sub}(\varOmega)$ of length at least $C |v|$.
\end{Definition}

\medskip

\begin{Remark} This notion has been discussed under various names by various
people. In particular it was studied  by Durand, Host and Skau
\cite{DHS} in the setting of subshifts  (under the name 'linearly
recurrent'). For Delone dynamical systems it was brought forward at
about the same time by Lagarias and Pleasants \cite{LP} under the
name 'linearly repetitive'. It has also featured in the work of
Boris Solomyak \cite{Sol} (under the name
 'uniformly repetitive').  It was also already discussed in an
unpublished work of Boshernitzan in the 90s. That work also contains
a characterization  in terms of positivity of weights. A
corresponding result for Delone systems was recently given in
\cite{BBL}.
\end{Remark}

\medskip

Durand \cite{Dur} gives a characterization of such subshifts in
terms of primitive $S$-adic systems and shows the following (which
was already known to Boshernitzan).

\begin{Theorem}\label{Theorem-unique-ergodicity} Let $(\varOmega,T)$ be a linearly repetitive
subshift. Then, the subshift is uniquely ergodic.
\end{Theorem}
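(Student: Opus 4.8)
The plan is to establish unique ergodicity directly from the linear repetitivity condition by controlling the frequencies of subwords uniformly. Recall that unique ergodicity of a subshift $(\varOmega,T)$ is equivalent to the statement that for every $v\in\mbox{Sub}(\varOmega)$ the number of occurrences of $v$ in a word $w\in\mbox{Sub}(\varOmega)$, divided by $|w|$, converges as $|w|\to\infty$, and that the convergence is uniform in the position of $w$ inside elements of $\varOmega$. So the goal is to show that these empirical frequencies form a Cauchy-type family with a limit independent of where one looks.

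First I would fix $v\in\mbox{Sub}(\varOmega)$ and, for a word $w\in\mbox{Sub}(\varOmega)$, write $N_v(w)$ for the number of occurrences of $v$ as a subword of $w$. The key quantitative input is linear repetitivity: with the constant $C$ from the definition, every subword of length at least $C|v|$ contains a copy of $v$, and moreover — this is the standard refinement one extracts from LR — there is a constant $C'$ (depending only on $C$) such that in any window of length $\ell$ the number of occurrences of $v$ lies between $\ell/(C'|v|)$ and $C'\ell/|v|$ once $\ell$ is large compared to $|v|$; the lower bound comes from bounded gaps with gap size $\le C|v|$, and the upper bound comes from the fact that two occurrences of $v$ cannot be too close without forcing, via LR applied to $v$ itself, a contradiction with non-degeneracy (or more simply, occurrences of $v$ starting at positions differing by less than, say, one are impossible, and a counting over disjoint blocks gives the bound). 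Then I would run the classical sub-additivity / "comparison of windows" argument: for two long words $w$ and $w'$ of comparable length, tile $w$ by disjoint copies of $w'$-length blocks (each of which is itself in $\mbox{Sub}(\varOmega)$) and compare occurrence counts, with the boundary error between consecutive blocks bounded by $|v|$ per block; dividing by the total length and using that there are $\asymp |w|/|w'|$ blocks shows $|N_v(w)/|w| - N_v(w')/|w'||$ is small once $|w'|$ is large, uniformly. This gives a well-defined limit $\mathrm{freq}(v)$ and uniform convergence, hence a unique invariant measure $\mu$ with $\mu([v]) = \mathrm{freq}(v)$.

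To conclude formally, I would invoke the standard criterion (as in \cite{Wal} or \cite{LindM}): a subshift is uniquely ergodic if and only if, for every $v\in\mbox{Sub}(\varOmega)$, the ergodic averages $\frac{1}{n}\sum_{k=0}^{n-1}\mathbf{1}_{[v]}(T^k\omega)$ converge uniformly in $\omega\in\varOmega$; since $\frac{1}{n}\sum_{k=0}^{n-1}\mathbf{1}_{[v]}(T^k\omega) = \frac{1}{n}N_v(\omega_1\cdots\omega_{n+|v|-1})$ up to an error of order $|v|/n$, the uniform convergence established above yields exactly this, and the common limit defines the unique invariant probability measure via the Riesz representation theorem (first on cylinder indicator functions, then extended by density to $C(\varOmega)$).

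The main obstacle is making the comparison-of-windows estimate genuinely uniform and clean, i.e. controlling the boundary errors when tiling one long word by translates of another so that the error term is $o(1)$ after normalization, and doing so simultaneously for the lower and upper occurrence bounds. This is where linear repetitivity is used in an essential way rather than mere minimality: the linear bound $C|v|$ on gaps is precisely what makes the number of blocks and the per-block error balance out. Everything else is bookkeeping with the free monoid structure on $\mathcal{A}^\ast$ and the definitions of $\mbox{Sub}(\varOmega)$ and the shift $T$ already set up above.
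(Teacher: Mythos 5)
Your overall strategy (reduce unique ergodicity to uniform existence of word frequencies via the standard Oxtoby-type criterion) is fine, but the central estimate is not proved, and as stated it cannot be. In the ``comparison of windows'' step you tile a long word $w$ by disjoint blocks of length $|w'|$ and then ``compare occurrence counts'' with $N_v(w')$; however, those blocks are merely some words of length $|w'|$ in $\mbox{Sub}(\varOmega)$, not copies of $w'$, and nothing in your argument controls the difference between $N_v$ of a block and $N_v(w')$. That difference is exactly what unique ergodicity is about: in a minimal but not uniquely ergodic subshift, two windows of the same large length can have $v$-frequencies staying a definite distance apart, while your bookkeeping (boundary error $|v|$ per block, about $|w|/|w'|$ blocks) applies verbatim there, so the argument would prove too much. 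What linear repetitivity actually gives through the tiling you describe is only a multiplicative comparison: every window of length $C|w'|$ contains a copy of $w'$, hence the minimal frequency of $v$ at large scales is at least $\frac{1}{C}$ times the maximal frequency at scale $|w'|$; similarly your two-sided bound $\ell/(C'|v|)\le N_v\le C'\ell/|v|$ pins frequencies down only up to a constant factor (and its upper half needs the bounded-power/index property, not the vague claim that occurrences cannot be close --- occurrences of $v$ may overlap). Comparability up to the factor $C$ does not make the difference of normalized counts tend to zero, and no per-block boundary accounting closes that gap.

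Closing it is the real content of the theorem, and the known proofs use genuinely more structure. The paper itself gives no proof but quotes Durand \cite{Dur}, who characterizes linearly repetitive (linearly recurrent) subshifts by primitive $S$-adic systems built from return words --- return words to any $w$ have length comparable to $|w|$ and there are boundedly many of them --- and unique ergodicity then follows from a Perron--Frobenius type contraction argument for the resulting primitive matrices. Alternatively (Boshernitzan's route, also alluded to in the paper), one shows that for any ergodic measure $\mu$ and any word $u$ of length $n$ one has $\mu([u])\ge \frac{1}{Cn}$ (bounded gaps at linear scale plus the ergodic theorem) and invokes Boshernitzan's unique ergodicity criterion; or one uses the uniform subadditive ergodic theorems of \cite{Len3}. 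If you want a self-contained argument, you should set up the return-word decomposition and prove a contraction estimate for frequency vectors across scales; the naive two-scale comparison you propose yields only comparability of frequencies, not their convergence to a common limit.
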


\begin{Remark} In fact, linear repetitivity implies a
strong form of subadditive ergodic theorem \cite{Len3}. Validity of
such a result  together with the fundamental  work of Kotani
\cite{Kot} is at the core of the approach to Cantor spectrum of
Lebesgue measure zero developed in \cite{Lenz}. Our considerations
 on Cantor spectrum rely on an extension of that approach worked out in \cite{BP}.
\end{Remark}

\bigskip

\section{Schr\"odinger operators with aperiodic order}\label{Schroedinger}
In this section, we present (parts of) the spectral theory of
discrete  Schr\"odinger operators associated to minimal dynamical
systems.

Schr\"odinger operators occupy a prominent position in the theory of
aperiodic order. Indeed, they arise in the quantum mechanical
description of conductance properties of quasicrystals and exhibit
quite interesting mathematical properties. In fact, already the
first two papers on them written by physicists suggest that the
corresponding spectral measures are purely singular continuous and
the spectrum is a Cantor set of Lebesgue measure zero
\cite{KKT,OPRSS}. By now these features as well as other
conductance-related properties known as anomalous transport  have
been thoroughly studied in a variety of models by various authors,
see the survey articles \cite{Sut,Dam,DEG}. The phenomenon that the
underlying spectrum is a Cantor set of Lebesgue measure zero is
usually referred to as \textit{Cantor spectrum of Lebesgue measure
zero}  and this is how we will refer to it subsequently.

The Laplacians on Schreier graphs discussed later will turn out to
be  unitarily equivalent to certain such  Schr\"odinger operators.

Subsequently,  we first discuss basic mathematical  features of
Schr\"odinger operators associated to dynamical systems in the next
Section \ref{basic-schroedinger}, then turn to absolutely continuous
spectrum and the spectrum as a set for subshifts  in Section
\ref{spectrum-set-ac} and finally give some background from physics
in  Section \ref{aperiodic-order-schroedinger}.

\subsection{Constancy of the spectrum and the integrated density of states
(IDS)}\label{basic-schroedinger} In this section we review some
basic theory of discrete Schr\"odinger operators (or rather Jacobi
matrices) associated to minimal topological dynamical systems. This
framework is slightly more general than the framework of minimal
subshifts. The results we discuss  include constancy of the spectrum
and uniform convergence of the so-called integrated density of
states. All these  results are well known.

\medskip

Whenever $T$ is a homeomorphism of the compact space $\varOmega$ we
will refer to $(\varOmega,T)$ as a \textit{topological dynamical
system}. Later we will meet an even more general definition of
dynamical system. To continuous functions
 $f,g
: \varOmega \longrightarrow \RR$ we then  associate a family of
\textit{discrete operators} $(H_\omega)_{\omega\in \varOmega}$.
Specifically, for each $\omega \in\varOmega$,  $H_\omega$ is a
bounded selfadjoint operator from $\ell^2 (\ZZ)$ to $\ell^2 (\ZZ)$
acting via
$$(H_\omega u ) (n) = f(T^n \omega) u (n-1) +
f(T^{n+1}\omega)  u (n+1) + g(T^n \omega) u (n)$$ for $u \in \ell^2
(\ZZ)$ and $n\in\ZZ$.

In the case $f\equiv 1$ the above operators are known  as
\textit{discrete Schr\"odinger operators}. For general $f$ the name
\textit{Jacobi matrices} is often used in the literature. Here, we
will deal with the case $f\neq 1$ but still mostly refer to  the
arising operators as Schr\"odinger operators.

As the operator $H_\omega$ is selfadjoint, the operator $H_\omega -
z$ is bijective with continuous inverse $(H_\omega - z)^{-1}$ for
any $z\in\CC\setminus \RR$. Moreover, for any $\varphi \in \ell^2
(\ZZ)$ there exists a unique positive  Borel measure
$\mu_\omega^\varphi$ on $\RR$ with
$$\int_\RR \frac{1}{t -z} d\mu_\omega^\varphi (t) = \langle \varphi,
(H_\omega-z)^{-1} \varphi\rangle$$ for any $z\in\CC\setminus \RR$.
This measure if finite and assigns the value $\|\varphi\|^2$ to the
set $\RR$.

\smallskip

For fixed $\omega\in\varOmega$, the measures $\mu_\omega^\varphi$,
$\varphi\in\ell^2 (\ZZ)$, are called the \textit{spectral measures}
of $H_\omega$. The spectrum of $H_\omega$ is defined as
$$\sigma (H_\omega) :=\{ z\in\CC :  (H_\omega - z I) \;\mbox{lacks a
bounded two-sided inverse.}\}$$ It is the smallest set containing
the support of any $\mu_\omega^\varphi$  (see e.g. \cite{Wei}). The
spectrum is said to be \textit{purely absolutely continuous} if the
spectral measures for all $\varphi\in \ell^2 (\ZZ)$  are absolutely
continuous with respect to  Lebesgue measure. The spectrum is said
to be \textit{purely singular continuous} if all spectral measures
are both continuous (i.e. do not have discrete parts) and singular
with respect to Lebesgue measure.

The following result is well-known. It can be found in various
places, see e.g.   \cite{Lenz2}. Recall that $(\varOmega,T)$ is
called \textit{minimal} if  $\{T^n \omega : n\in\ZZ\}$ is dense in
$\varOmega$ for any $\omega \in\varOmega$.

\begin{Theorem}[Constancy of the spectrum] \label{Constancy}Let  $(\varOmega,T)$ be  minimal and $f,g
:\varOmega\longrightarrow \RR$ continuous. Then, there exists a
closed  subset $\Sigma \subset \RR$ such that the spectrum $\sigma
(H_\omega)$ of $H_\omega$ equals $\Sigma$ for any
$\omega\in\varOmega$.
\end{Theorem}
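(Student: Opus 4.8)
The plan is to exploit minimality, which makes the orbit $\{T^n\omega : n\in\ZZ\}$ dense for every $\omega$, together with a covariance relation between the operators $H_\omega$ and the shift. First I would establish the \emph{covariance identity}: if $U_\omega : \ell^2(\ZZ)\to\ell^2(\ZZ)$ denotes the unitary shift $(U u)(n) = u(n-1)$, then a direct computation from the definition of $H_\omega$ gives
\[
U H_\omega U^{-1} = H_{T\omega}.
\]
Indeed, writing out $(U H_\omega U^{-1} u)(n)$ and relabelling indices, the coefficient $f(T^n\omega)$ attached to the hop from $n$ to $n-1$ becomes $f(T^{n}(T\omega))$ after the shift, and similarly for the diagonal term $g$. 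Since unitary conjugation preserves the spectrum, this yields $\sigma(H_\omega) = \sigma(H_{T\omega})$, and hence $\sigma(H_\omega) = \sigma(H_{T^n\omega})$ for all $n\in\ZZ$. So the spectrum is constant along orbits; the content of the theorem is upgrading this to constancy on the orbit closure, i.e. on all of $\varOmega$.

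The second ingredient is \emph{continuity of $\omega\mapsto\sigma(H_\omega)$ in the Hausdorff metric}. Here I would use that $\omega\mapsto H_\omega$ is continuous from $\varOmega$ to the bounded operators on $\ell^2(\ZZ)$ in the strong (indeed, on finite-dimensional truncations, norm) sense: if $\omega_k\to\omega$ in $\varOmega$, then $f(T^n\omega_k)\to f(T^n\omega)$ and $g(T^n\omega_k)\to g(T^n\omega)$ for each fixed $n$ by continuity of $f,g$ and of $T$, so $H_{\omega_k}\to H_\omega$ strongly, and together with uniform boundedness (the coefficients are bounded uniformly in $\omega$ since $f,g$ are continuous on the compact space $\varOmega$) this gives strong resolvent convergence. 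Strong resolvent convergence of self-adjoint operators implies that the spectrum cannot suddenly expand in the limit: more precisely, for any $z\notin\sigma(H_\omega)$ one has $z\notin\sigma(H_{\omega_k})$ for large $k$, i.e. $\sigma(H_\omega)\supseteq \limsup_k \sigma(H_{\omega_k})$ in the appropriate sense. This one-sided semicontinuity is exactly what is needed.

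Combining the two facts finishes the argument. Fix a point $\omega_0\in\varOmega$ and set $\Sigma := \sigma(H_{\omega_0})$. Given an arbitrary $\omega\in\varOmega$, by minimality choose $n_k$ with $T^{n_k}\omega_0 \to \omega$. By covariance, $\sigma(H_{T^{n_k}\omega_0}) = \Sigma$ for all $k$, and by the semicontinuity above, $\sigma(H_\omega) \subseteq \Sigma$ (no spectrum is gained in the limit). Running the same argument with the roles reversed — by minimality also choose $m_j$ with $T^{m_j}\omega \to \omega_0$, so that $\sigma(H_{\omega_0}) \subseteq \sigma(H_\omega)$ — gives the reverse inclusion, hence $\sigma(H_\omega) = \Sigma$ for every $\omega\in\varOmega$. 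Finally, $\Sigma$ is closed because the spectrum of a bounded self-adjoint operator is always a closed (indeed compact) subset of $\RR$. The main obstacle, and the only point requiring genuine care, is the semicontinuity step: one must argue correctly that spectrum cannot "jump in" under strong resolvent convergence, which is where the full two-sided inclusion (and hence both directions of minimality) is really used — a single density statement would only give one inclusion.
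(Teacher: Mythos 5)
Your overall architecture is the standard elementary one (the paper itself does not prove this theorem but refers to the literature, e.g. \cite{Lenz2}): covariance of the family under the shift gives constancy of $\sigma(H_\omega)$ along orbits, and a spectral semicontinuity statement for strongly convergent operators upgrades this to the orbit closure, with minimality applied in both directions to get both inclusions. That skeleton is correct, and the final two-sided argument is exactly right. (A minor point: with your convention $(Uu)(n)=u(n-1)$ the identity is $U H_\omega U^{-1}=H_{T^{-1}\omega}$ rather than $H_{T\omega}$; this is immaterial, since either way the spectrum is constant along the $T$-orbit.)

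However, the one step you yourself flag as requiring care is stated incorrectly, and as written the deduction does not follow from it. Under strong resolvent convergence it is \emph{false} that $z\notin\sigma(H_\omega)$ forces $z\notin\sigma(H_{\omega_k})$ for large $k$; that implication (equivalently $\limsup_k\sigma(H_{\omega_k})\subseteq\sigma(H_\omega)$) is a norm-resolvent-convergence statement. For strong convergence it fails: take $A_k$ to be multiplication by the characteristic function of $\{k\}$ on $\ell^2(\ZZ)$; then $A_k\to 0$ strongly, yet $1\in\sigma(A_k)$ for every $k$ while $1\notin\sigma(0)$. Moreover, your stated lemma would yield $\Sigma\subseteq\sigma(H_\omega)$, which is the opposite of the inclusion $\sigma(H_\omega)\subseteq\Sigma$ you then claim to extract from it. What is true under strong resolvent convergence -- and what your phrase ``no spectrum is gained in the limit'' actually describes -- is the other semicontinuity: every $\lambda\in\sigma(H_\omega)$ is the limit of points $\lambda_k\in\sigma(H_{\omega_k})$; equivalently, if an open interval is disjoint from $\sigma(H_{\omega_k})$ for all large $k$, it is disjoint from $\sigma(H_\omega)$, i.e. $\sigma(H_\omega)\subseteq\liminf_k\sigma(H_{\omega_k})$ (see e.g. Reed--Simon, Theorem VIII.24, or Weidmann \cite{Wei}). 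With the lemma stated in this correct form, your argument goes through verbatim: choosing $T^{n_k}\omega_0\to\omega$ gives $\sigma(H_\omega)\subseteq\Sigma$, and exchanging the roles of $\omega$ and $\omega_0$ via minimality gives the reverse inclusion. So the gap is not in the strategy but in the formulation of the key lemma, which must be corrected before the argument is valid.
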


We will refer to the set $\Sigma$ in the previous theorem as the
\textit{spectrum of the Schr\"odinger operator associated to
$(\varOmega,T)$ (and (f,g))}. The spectrum $\Sigma$ is a one of the
main objects of interest in our study.

\medskip

Before turning to a finer  analysis of the spectrum we will
introduce a further quantity of interest the so called integrated
density of states. In order to do so, we will assume that the
underlying dynamical system $(\varOmega,T)$ is not only minimal but
also \textit{uniquely ergodic} i.e. possesses a   unique invariant
probability measure, which we call  $\lambda$.  Then, we can
associate to the family $(H_\omega)$ the positive  measure $k$ on
$\RR$ defined via
$$\int_{\RR}  F (x) dk (x) :=\int_\varOmega \langle f(H_\omega)
\delta_0, \delta_0\rangle d\lambda (\omega)$$ (for $F$ any
continuous function on $\RR$ with compact support). Here,
$\delta_0\in \ell^2 (\ZZ)$ is just the characteristic function of
$0\in \ZZ$.  This measure $k$ is called the \textit{integrated
density of states}. Let
$$N : \RR\longrightarrow [0,1], N(E):=\int_{(-\infty,E]} d k,$$
be the distribution function of $k$.

\smallskip

There is a direct relation between the measure $k$ and the spectrum
of the $H_\omega$.

\begin{Theorem}\label{theorem-support-and-atomfree}
Let $(\varOmega,T)$ be minimal and uniquely ergodic. Then, the set
$\Sigma$ is the support of the measure $k$. If the function $f$ does
not vanish anywhere then $k$ does not have atoms (i.e. it assigns
the value zero to any set containing only one element).
\end{Theorem}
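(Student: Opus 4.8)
The claim has two parts. For the first part (that $\Sigma$ is the support of $k$), the plan is to relate the spectrum of each $H_\omega$ to the support of the density of states measure $k$ via the spectral measures $\mu_\omega^{\delta_0}$ and the ergodic average defining $k$. First I would observe that $k$, by its very definition, is the $\lambda$-average of the spectral measures $\mu_\omega^{\delta_0}$, since $\langle F(H_\omega)\delta_0,\delta_0\rangle = \int_\RR F\, d\mu_\omega^{\delta_0}$. Hence $\mathrm{supp}\, k \subseteq \overline{\bigcup_\omega \mathrm{supp}\,\mu_\omega^{\delta_0}} \subseteq \Sigma$, using Theorem \ref{Constancy}. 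For the reverse inclusion, the point is that $\mathrm{supp}\, k$ is a closed shift-invariant-in-spirit object: if $E_0 \in \Sigma$ but $E_0 \notin \mathrm{supp}\, k$, then there is an open interval $I \ni E_0$ with $k(I) = 0$, i.e. $\int_\varOmega \mu_\omega^{\delta_0}(I)\, d\lambda(\omega) = 0$, so $\mu_\omega^{\delta_0}(I) = 0$ for $\lambda$-a.e.\ $\omega$. Using translation covariance $H_{T\omega} = U H_\omega U^{-1}$ with $U$ the shift on $\ell^2(\ZZ)$, one gets $\mu_{T\omega}^{\delta_0} = \mu_\omega^{\delta_{-1}}$, and iterating, $\mu_\omega^{\delta_n}(I) = 0$ for all $n$ for $\lambda$-a.e.\ $\omega$; since $\{\delta_n\}$ is total in $\ell^2(\ZZ)$, this forces the spectral projection $\chi_I(H_\omega) = 0$, hence $I \cap \sigma(H_\omega) = \emptyset$, contradicting $E_0 \in \Sigma = \sigma(H_\omega)$.

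For the second part (absence of atoms when $f$ vanishes nowhere), the plan is to rule out that $k(\{E_0\}) > 0$ for some $E_0$. By the averaging identity, $k(\{E_0\}) = \int_\varOmega \mu_\omega^{\delta_0}(\{E_0\})\, d\lambda(\omega)$, so $k(\{E_0\}) > 0$ would give a positive-measure set of $\omega$ with $\mu_\omega^{\delta_0}(\{E_0\}) > 0$, i.e.\ $E_0$ is an eigenvalue of $H_\omega$ with an eigenfunction $u$ satisfying $u(0) \neq 0$. The key structural input is that for a Jacobi matrix with off-diagonal entries $f(T^{n}\omega)$ that never vanish, the eigenvalue equation $(H_\omega - E_0)u = 0$ is a second-order recursion whose solution is uniquely determined by the two consecutive values $u(0), u(1)$ (and one can solve forward and backward precisely because $f \neq 0$ lets us divide). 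Hence the $E_0$-eigenspace is at most one-dimensional; moreover, by shift covariance, if $E_0$ is an eigenvalue for $\omega$ it is one for $T\omega$, and the corresponding eigenfunctions are related by a shift, which — combined with $\ell^2$-normalization and a standard Poincar\'e-recurrence / invariance argument on the map $\omega \mapsto$ (position of the "center of mass" of the normalized eigenfunction) — leads to a contradiction with unique ergodicity and non-periodicity. Concretely, I would invoke the well-known fact (going back to the analysis of the integrated density of states for ergodic Jacobi matrices) that $N(E) = \lambda(\{\omega : E \in \text{spectrum below}\,\ldots\})$-type formulas together with finiteness of $\sum_\omega$ over an orbit of $|u(0)|^2$ along a generic orbit force $k(\{E_0\})=0$; alternatively, one uses that the map assigning to $\omega$ (in the positive-measure eigenvalue set) the normalized eigenfunction is measurable and shift-equivariant, so $\sum_{n} |u_\omega(n)|^2 = \sum_n |u_{T^n\omega}(0)|^2 = \infty \cdot (\text{average}) $ unless the average is zero — contradicting $u \in \ell^2$.

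The main obstacle is the absence-of-atoms argument, specifically making rigorous the step that a shift-equivariant, $\lambda$-a.e.\ defined family of $\ell^2$-eigenfunctions cannot exist unless the eigenvalue set has measure zero. The clean way to do this is the classical observation that if $E_0$ were an eigenvalue on a set $A \subseteq \varOmega$ with $\lambda(A) > 0$, then by the ergodic theorem the orbit of $\lambda$-a.e.\ $\omega \in A$ returns to $A$ with positive frequency; each return contributes a fixed positive amount $\int_A |u_\omega(0)|^2\,d\lambda > 0$ (after suitable normalization) to $\|u_\omega\|^2$ via the identity $\|u_\omega\|_{\ell^2(\ZZ)}^2 = \sum_{n\in\ZZ} |u_{T^n\omega}(0)|^2$ and one-dimensionality of the eigenspace (which pins down $u_{T^n\omega}$ as the shift of $u_\omega$ up to scalar), yielding $\|u_\omega\|^2 = \infty$, a contradiction. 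I would present this, citing the standard references already invoked in the text (e.g.\ \cite{Lenz2}) for the precise formulation, since all these facts are well known; the only genuinely used hypothesis beyond minimality and unique ergodicity is $f \neq 0$, which guarantees both the one-dimensionality of eigenspaces and the covariance relation in the form needed.
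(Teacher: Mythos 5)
The paper does not actually prove this theorem; it only points to the literature (\cite{Lenz2}, Section~5 of \cite{Teschl}, Section~5 of \cite{LPV}) and remarks that the absence of atoms rests on amenability of $\ZZ$. Your proposal supplies precisely the standard argument those references contain: for the support, write $k=\int_\varOmega \mu_\omega^{\delta_0}\,d\lambda(\omega)$, use that each $\mu_\omega^{\delta_0}$ is supported in $\sigma(H_\omega)=\Sigma$ for one inclusion, and for the other use covariance $H_{T\omega}=UH_\omega U^{-1}$ plus $T$-invariance of $\lambda$ and totality of $\{\delta_n\}$ to show that $k(I)=0$ forces $\chi_I(H_\omega)=0$ for a.e.\ $\omega$, contradicting constancy of the spectrum (Theorem \ref{Constancy}); for the atoms, combine covariance, finite multiplicity of $\ell^2$-eigenvalues (this is where $f\neq 0$ enters) and an averaging/Birkhoff argument over $\{-N,\dots,N\}$ --- which is exactly the ``amenability of $\ZZ$'' alluded to in the paper's remark. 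So your route is the intended one, not a different one.

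Two steps should be tightened. First, the two-term recursion argument only shows that the solution space of $(H_\omega-E_0)u=0$ is at most two-dimensional, not that the $\ell^2$-eigenspace is one-dimensional; simplicity follows from constancy (in $n$) of the Wronskian $f(T^{n+1}\omega)\bigl(u(n+1)v(n)-u(n)v(n+1)\bigr)$, which must vanish if both solutions are $\ell^2$ and $f$ is bounded. Alternatively you can bypass eigenfunction selection and simplicity altogether: set $h(\omega):=\langle \chi_{\{E_0\}}(H_\omega)\delta_0,\delta_0\rangle$, note $h(T^n\omega)=\langle \chi_{\{E_0\}}(H_\omega)\delta_{n},\delta_{n}\rangle$ and $\sum_{n\in\ZZ}h(T^n\omega)=\operatorname{tr}\chi_{\{E_0\}}(H_\omega)\leq 2$, so the Birkhoff averages of $h$ tend to $0$, while they tend to $\int h\,d\lambda=k(\{E_0\})$ a.e.; hence $k(\{E_0\})=0$. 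Second, the phrase ``each return contributes a fixed positive amount $\int_A|u_\omega(0)|^2\,d\lambda$'' is not literally correct (a return at time $n$ contributes $|u_\omega(n)|^2$, which need not be bounded below); repair it either by the Birkhoff-average formulation just described or by restricting to $A_\varepsilon=\{\omega: |u_\omega(0)|^2\geq\varepsilon\}$, which has positive measure for small $\varepsilon$, so that each return does contribute at least $\varepsilon$. With these repairs your argument is complete and uses exactly the hypotheses as intended (minimality only through constancy of the spectrum, unique ergodicity through the invariant measure and the ergodic theorem, $f\neq 0$ through finite multiplicity).
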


\begin{Remark}
This is rather standard in the theory of random operators. Specific
variants  of it can be found in many places. In particular, the
first statement of the theorem can be found in \cite{Lenz2}. In the
case of $f$ which do not vanish anywhere the statements of the
theorem are contained in Section 5 of
 \cite{Teschl}. Given the  constancy of the spectrum,  Theorem \ref{Constancy},
 the statements   are also  very special
 cases of the results of Section 5 of
\cite{LPV}.  The key ingredient for the absence of atoms is
amenability of the underlying group $\ZZ$. The statement on the
support does not even need this property.
\end{Remark}

\medskip

As is well known, it is possible to 'calculate' $k$ via an
approximation procedure. This will be discussed next. For $\in \ZZ$
let
$$j_{n} : \ell^2 (\{1,\ldots, n\})\longrightarrow \ell^2 (\ZZ)$$
be the canonical inclusion and let $p_{n}$ be the adjoint of
$j_{n}$. Thus, $$p_{n} : \ell^2 (\ZZ)\longrightarrow \ell^2
(\{1,\ldots, n\})$$ is  the canonical projection. Define for
$\omega\in\varOmega$ then
$$H_\omega^{n} := p_{n} H_\omega j_{n}.$$
We will be concerned with the spectral theory of these operators.
Let the measure $k_\omega^{n}$ on $\RR$ be defined as
$$\int_{\RR} F (x) d k_{\omega}^{n} (x) := \frac{1}{n}
\sum_{k=1}^n \langle F (H_\omega^{n}) \delta_k, \delta_k\rangle$$
(for any  continuous $F$ on $\RR$ with compact support) and let
$$N_\omega^{n} : \RR \longrightarrow [0,1],\: N_\omega^{n}
(E):=\int_{(\infty,E]} d k_\omega^{n} (x),$$ be its distribution
function. Let $E_1,\ldots, E_{n}$ be the eigenvalues of
$H_\omega^{n}$ counted with multiplicity. Then, straightforward
linear algebra (diagonalization of $H_\omega^{n}$ and independence
of the trace of the chosen orthonormal basis) shows that
$$\int_{\RR} F (x) d k_{\omega}^{n} (x) = \frac{1}{n } \sum_{j}
F(E_j)$$ holds for any continuous $F$ on $\RR$ with compact support
and that the distribution functions of the measures $k_{\omega}^{n}
$  are given by
$$N_\omega^{n} (E) = \frac{\#\{ \mbox{Eigenvalues of
$H_\omega^{n}$ not exceeding $E$}\}}{n },$$ where $\sharp$ denotes
the cardinality of a set. In this sense, $k_\omega^{n}$ is just an
averaged eigenvalue counting.

\begin{Theorem}[Convergence of the integrated density of states] \label{theorem-convergence-ids}
Let $(\varOmega,T)$ be minimal and uniquely ergodic. Then, for any
continuous $F$ on $\RR$ with compact support and any $\varepsilon
>0$ there exists an $N\in\NN$ with
$$ \left| \int_\RR F(x) dk (x) - \int_\RR F (x) d k_\omega^{n} (x) \right|
\leq \varepsilon$$ for all $\omega\in\varOmega$ and all $n\in\ZZ$
with $n \geq N$.
\end{Theorem}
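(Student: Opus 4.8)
The plan is to prove a uniform ergodic theorem for the continuous functions $\omega \mapsto \langle F(H_\omega^n)\delta_k,\delta_k\rangle$ via a Weyl-type approximation argument, reducing everything to the classical fact that for a uniquely ergodic system the Birkhoff averages of a fixed continuous function converge uniformly. First I would observe that it suffices to treat $F$ a polynomial: by the Weierstrass approximation theorem any continuous $F$ with compact support can be uniformly approximated on the (uniformly bounded) spectrum $\bigcup_\omega \sigma(H_\omega) \cup \bigcup_{\omega,n}\sigma(H_\omega^n)$ by polynomials, and since $\|F(H_\omega^n)\| \le \sup|F|$ on that interval and likewise for $k$, a standard $\varepsilon/3$ argument transfers the uniform estimate from polynomials to general $F$. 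The boundedness of all these spectra is immediate from $\|f\|_\infty, \|g\|_\infty < \infty$ together with $\|H_\omega^n\| \le \|H_\omega\|$.

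So fix $F(x) = x^p$. The matrix entry $\langle (H_\omega^n)^p \delta_k,\delta_k\rangle$ is, by the tridiagonal structure of $H_\omega$, a finite sum over paths of length $p$ starting and ending at $k$ and staying inside $\{1,\dots,n\}$, with weights that are products of values $f(T^j\omega)$ and $g(T^j\omega)$. Writing $b(\omega):=\langle H_\omega^p\delta_0,\delta_0\rangle$ for the \emph{full}-line quantity, one has $b \in C(\varOmega)$, and the point is that $\langle (H_\omega^n)^p\delta_k,\delta_k\rangle = b(T^k\omega)$ \emph{as soon as $k$ is at distance more than $p$ from the endpoints $1$ and $n$}, because a path of length $p$ from $k$ to $k$ never leaves the window and hence cannot tell whether it sits on $\ZZ$ or on $\{1,\dots,n\}$. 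For the at most $2p$ boundary indices the entries differ, but they are still bounded in absolute value by a constant $M_p$ depending only on $p$, $\|f\|_\infty$, $\|g\|_\infty$. Therefore
$$\int_\RR F\, dk_\omega^n = \frac1n\sum_{k=1}^n \langle (H_\omega^n)^p\delta_k,\delta_k\rangle = \frac1n\sum_{k=1}^n b(T^k\omega) + R_\omega^n,\qquad |R_\omega^n|\le \frac{2p\,(M_p+\|b\|_\infty)}{n}.$$
On the other hand, by definition of $k$ via the invariant measure $\lambda$ and $T$-invariance of $\lambda$, $\int_\RR F\,dk = \int_\varOmega \langle H_\omega^p\delta_0,\delta_0\rangle\, d\lambda(\omega) = \int_\varOmega b\, d\lambda$. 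Since $(\varOmega,T)$ is uniquely ergodic, the Birkhoff averages $\frac1n\sum_{k=1}^n b\circ T^k$ converge to $\int_\varOmega b\,d\lambda$ \emph{uniformly} in $\omega\in\varOmega$; combining this with the $O(1/n)$ bound on $R_\omega^n$ gives, for each fixed $p$, an $N$ with $\bigl|\int F\,dk - \int F\,dk_\omega^n\bigr|\le \varepsilon$ for all $\omega$ and all $n\ge N$. Feeding this back through the polynomial-approximation step completes the proof.

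The step I expect to require the most care is the bookkeeping of the boundary correction $R_\omega^n$: one must be precise that the combinatorial path expansion of $\langle (H_\omega^n)^p\delta_k,\delta_k\rangle$ genuinely agrees with $b(T^k\omega)$ for interior $k$ (the "finite speed of propagation" of the operator $H_\omega$, which is tridiagonal so moves by one site per application), and that for the $O(p)$ exceptional indices a crude uniform bound suffices because they contribute $O(p/n)$ after dividing by $n$. The only genuinely quantitative input beyond this is the uniform convergence of Birkhoff sums for a single continuous function, which is exactly the content of unique ergodicity; everything else is the reduction and the $\varepsilon/3$ packaging. The hypothesis of unique ergodicity (guaranteed in our setting of linearly repetitive subshifts by Theorem~\ref{Theorem-unique-ergodicity}) is used only at this one point, and minimality plays no further role once constancy of the spectrum has been recorded.
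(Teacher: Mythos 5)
Your argument is correct and follows essentially the same route as the paper: the paper's proof simply cites the standard almost-everywhere weak convergence of the $k_\omega^{n}$ (Lemma 5.12 in \cite{Teschl}) and observes that replacing the Birkhoff ergodic theorem in its proof by the uniform (Oxtoby) ergodic theorem for uniquely ergodic systems yields uniformity in $\omega$, which is precisely the mechanism you use. You merely make the cited argument explicit (polynomial approximation on a common compact interval, finite propagation of the tridiagonal operator, the $O(p/n)$ boundary correction) before applying the uniform ergodic theorem to the continuous function $\omega\mapsto\langle H_\omega^p\delta_0,\delta_0\rangle$.
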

\begin{proof} It  is well-known that the  measures
 $(k_\omega^{n})_n$ converge weakly toward $k$ for $n\to\infty$ for almost every
 $\omega\in\varOmega$. This can be found in many places, see
 e.g. Lemma 5.12 in  \cite{Teschl}.
(That lemma assumes that  $f$ does not vanish anywhere but its proof
does not use this assumption.)
  A key step in the proof is the
 use of the Birkhoff ergodic theorem. The desired statement now
 follows by replacing the Birkhoff ergodic theorem with  the uniform
 ergodic theorem (Oxtoby Theorem)  valid for uniquely ergodic systems \cite{Wal}.
\end{proof}

The operators $H_\omega^{n}$ are sometimes thought of as arising out
of the $H_\omega$ by some form of 'Dirichlet boundary condition'.
The previous result is stable under taking different 'boundary
conditions'.  In fact, even a more general statement is true as we
will discuss next. (The more general statement will even save us
from saying what we mean by boundary condition.)  Let for any $n\in
\ZZ$  and $\omega \in \varOmega$ be a selfadjoint operator
$C_\omega^{n}$ on $\ell^2(\{1,\ldots, n\})$ be given. Then, the
statement of the theorem essentially continues to hold if the
operators $H_\omega^{n}$ are replaced by the operators
$$\widetilde{H}_\omega^{n}:=H_\omega^{n} + C_\omega^{n}$$
provided the rank of the $C's$ is not too big. Here, the rank  of an
operator $C$ on a finite dimensional space, denoted by $\rank (C)$,
is just  the dimension of the range of $C$.

In order to be more specific, we  introduce the measure
$\widetilde{k}_\omega^{n}$ on $\RR$ defined as
$$\int_{\RR} F (x) d \widetilde{k}_{\omega}^{n} (x) := \frac{1}{n }
\sum_{k=1}^n \langle F (\widetilde{H}_\omega^{n}) \delta_k,
\delta_k\rangle$$ (for any  continuous $F$ on $\RR$ with compact
support) and its distribution function  given by
$$\widetilde{N}_\omega^{n} : \RR\longrightarrow
[0,1],\; \widetilde{N}_\omega^{n} (E):= \frac{1}{n}
\sharp\{\mbox{Eigenvalues of $\widetilde{H}_\omega^{n}$ not
exceeding $E$} \}.$$

\begin{Corollary}\label{perturbation-ids}
Consider the situation just described. Let $\omega\in\varOmega$ be
given with
$$\frac{1}{n} \rank (C_\omega^{n})\to 0, n \to \infty.$$ Then,
 for
any continuous $F$ on $\RR$ with compact support and any
$\varepsilon
>0$ there exists an $N\in\NN$ with
$$ \left| \int_\RR F(x) dk (x) - \int_\RR F (x) d\widetilde{k}_\omega^{n} (x) \right|
\leq \varepsilon$$ for all  $n \geq N$.
\end{Corollary}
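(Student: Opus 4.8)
The plan is to deduce the Corollary from Theorem \ref{theorem-convergence-ids} by a rank perturbation argument for distribution functions, together with an approximation of the test function $F$. The key classical input is the interlacing inequality: if $A$ and $B$ are selfadjoint operators on the same $n$-dimensional space with $\rank(A-B)\le r$, then their eigenvalue counting functions satisfy
$$
|N_A(E) - N_B(E)| \le \frac{r}{n}\quad\text{for every }E\in\RR,
$$
where $N_A(E):=\frac1n\sharp\{\text{eigenvalues of }A\le E\}$. Applying this with $A=\widetilde H_\omega^{n}=H_\omega^{n}+C_\omega^{n}$ and $B=H_\omega^{n}$, and using $\rank(C_\omega^{n})=:r_n$, we get
$$
\big|\widetilde N_\omega^{n}(E) - N_\omega^{n}(E)\big| \le \frac{r_n}{n}\longrightarrow 0
$$
uniformly in $E$, by the hypothesis $\frac1n\rank(C_\omega^{n})\to 0$. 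Since the measures $\widetilde k_\omega^{n}$ and $k_\omega^{n}$ have these as their distribution functions and are supported in a fixed bounded interval (the operator norms of $H_\omega^{n}$ are bounded uniformly, and we may assume the $C_\omega^{n}$ are as well, or simply restrict attention to the part of $F$ on a large interval), this gives $\int F\,d\widetilde k_\omega^{n} - \int F\,d k_\omega^{n}\to 0$ for every continuous compactly supported $F$.

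More precisely, I would argue as follows. First, fix $F$ continuous with compact support and $\varepsilon>0$. Choose a bounded interval $I=[-R,R]$ containing the support of $F$ and all the spectra of $H_\omega^{n}$ and $\widetilde H_\omega^{n}$ (possible since $\|H_\omega^{n}\|\le\|H_\omega\|$ is bounded uniformly over $\omega$ and $n$; if the $C_\omega^{n}$ are not assumed bounded one replaces $F$ by a cutoff and notes the tails contribute nothing on $[-R,R]$ after enlarging $R$). On $I$, approximate $F$ uniformly by a step function, or simply integrate by parts: writing $\int_\RR F\,dk_\omega^{n} = -\int_\RR F'(E)\,N_\omega^{n}(E)\,dE$ when $F$ is $C^1$ (and approximating a general continuous $F$ by $C^1$ functions in sup norm first), one gets
$$
\Big|\int_\RR F\,d\widetilde k_\omega^{n} - \int_\RR F\,dk_\omega^{n}\Big|
= \Big|\int_I F'(E)\big(\widetilde N_\omega^{n}(E)-N_\omega^{n}(E)\big)\,dE\Big|
\le \|F'\|_\infty \cdot 2R \cdot \frac{r_n}{n}.
$$
Hence this quantity is $\le\varepsilon/2$ once $n\ge N_1$ for suitable $N_1$ depending on $F$ and $\varepsilon$. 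Second, by Theorem \ref{theorem-convergence-ids} there is $N_2$ with $|\int F\,dk - \int F\,dk_\omega^{n}|\le\varepsilon/2$ for all $n\ge N_2$ (and all $\omega$, though here we only need the fixed $\omega$ of the hypothesis). Combining via the triangle inequality with $N:=\max\{N_1,N_2\}$ yields the assertion.

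The main obstacle is a small bookkeeping point rather than a deep one: making sure the uniform bound $|\widetilde N_\omega^{n}-N_\omega^{n}|\le r_n/n$ is applied correctly, i.e. both operators must live on the \emph{same} finite-dimensional space $\ell^2(\{1,\dots,n\})$ — which they do by construction — and that the interlacing/eigenvalue-counting estimate is invoked in its correct form for a rank-$r$ difference (it follows from the min-max principle, or from iterating the rank-one case $n$ times). A secondary point worth a sentence in the write-up is justifying the reduction from continuous compactly supported $F$ to $C^1$ (or to step functions): this is a routine density argument, using that all measures involved are probability measures supported in the fixed interval $I$, so uniform approximation of $F$ transfers to an $\varepsilon$-estimate on the integrals uniformly in $n$ and $\omega$. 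One might also remark that, unlike Theorem \ref{theorem-convergence-ids}, the Corollary is stated only for the \emph{fixed} $\omega$ appearing in the rank hypothesis, so no uniformity in $\omega$ is required in the perturbation step — the uniformity over $\omega$ is inherited solely from Theorem \ref{theorem-convergence-ids}.
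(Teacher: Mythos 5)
Your argument is correct and is essentially the paper's own proof: the min--max (interlacing) bound $|\widetilde{N}_\omega^{n}(E)-N_\omega^{n}(E)|\leq \frac{1}{n}\rank(C_\omega^{n})$ uniformly in $E$, combined with Theorem \ref{theorem-convergence-ids} via the triangle inequality. The extra details you supply (integration by parts and the density reduction from continuous to $C^1$ test functions) are exactly the routine steps the paper compresses into ``this directly gives the desired statement.''
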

\begin{proof}
% By Theorem \ref{theorem-support-and-atomfree} the measure $k$ does
% not have any atoms. Thus,  Theorem \ref{theorem-convergence-ids}
%implies $|N_\omega^{n,m} (E) - N (E)|\to 0 $ for $m-n\to \infty$
%uniformly in $E\in\RR$ by standard measure theory.
A consequence of the min-max principle, see e.g. Theorem 4.3.6 in
\cite{HoJ} shows
$$|\widetilde{N}_\omega^{n} (E)  - N_\omega^{n} (E)|\leq   \frac{1}{n } \rank
(C_\omega^{n})$$ independent of $E\in \RR$. This directly gives
%Putting this together we infer $|\widetilde{N}_\omega^{n,m} (E) - N
%(E)|\to 0 $ uniformly in $E$ for $m-n\to \infty$.  This, in turn,
%easily implies
the desired statement.
\end{proof}

The  theorem allows one to obtain an inclusion formula for the
spectrum $\Sigma$. Denote the spectrum of the operator
$\widetilde{H}_\omega^{n}$ by $\widetilde{\Sigma}_\omega^{n}$. By
construction, $\widetilde{\Sigma}_\omega^{n}$ is just the support of
the measure $\widetilde{k}_\omega^{n}$.

\begin{Corollary}\label{Cor-inclusion} Assume the situation of the previous theorem.
Then, the inclusion
$$\Sigma \subset \bigcap_{ n} \overline{ \bigcup_{k\geq
n}\widetilde{\Sigma_\omega^{k}} }$$ holds for all $\omega\in
\varOmega$.
\end{Corollary}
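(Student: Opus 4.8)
The plan is to deduce the spectral inclusion from the convergence of the approximating measures $\widetilde{k}_\omega^{n}$ established in Corollary \ref{perturbation-ids}, together with the fact (recalled in Theorem \ref{theorem-support-and-atomfree}) that $\Sigma$ is the support of the integrated density of states $k$. The elementary observation driving everything is that if a sequence of positive measures $\mu_n$ converges weakly to a positive measure $\mu$, then $\mathrm{supp}(\mu) \subset \bigcap_n \overline{\bigcup_{k\geq n} \mathrm{supp}(\mu_k)}$. Indeed, if $E_0\in\mathrm{supp}(\mu)$ and $\varepsilon>0$, then $\mu((E_0-\varepsilon, E_0+\varepsilon))>0$, so by the portmanteau theorem $\liminf_n \mu_n((E_0-\varepsilon,E_0+\varepsilon)) \geq \mu((E_0-\varepsilon, E_0+\varepsilon)) > 0$ (applied to a continuous bump function supported in the open interval and bounded below by a positive constant on a smaller interval). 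Hence for infinitely many $n$ the interval $(E_0-\varepsilon,E_0+\varepsilon)$ meets $\mathrm{supp}(\mu_n)$, which forces $E_0$ into $\overline{\bigcup_{k\geq n}\mathrm{supp}(\mu_k)}$ for every $n$.

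First I would fix $\omega\in\varOmega$ as in the statement, so that $\frac{1}{n}\rank(C_\omega^{n})\to 0$, and invoke Corollary \ref{perturbation-ids}: for every continuous $F$ with compact support, $\int_\RR F\,d\widetilde{k}_\omega^{n} \to \int_\RR F\,dk$ as $n\to\infty$. This is precisely weak convergence $\widetilde{k}_\omega^{n}\to k$ of the positive measures on $\RR$. Next I would recall that by construction $\widetilde{\Sigma}_\omega^{n}$, the spectrum of the finite-rank self-adjoint operator $\widetilde{H}_\omega^{n}$, coincides with the support of $\widetilde{k}_\omega^{n}$ — this was already noted just before the statement. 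By Theorem \ref{theorem-support-and-atomfree} (which applies since $(\varOmega,T)$ is minimal and uniquely ergodic), $\Sigma = \mathrm{supp}(k)$. Then I would apply the elementary lemma from the previous paragraph with $\mu_n = \widetilde{k}_\omega^{n}$ and $\mu = k$, yielding
$$\Sigma = \mathrm{supp}(k) \subset \bigcap_n \overline{\bigcup_{k\geq n} \mathrm{supp}(\widetilde{k}_\omega^{k})} = \bigcap_n \overline{\bigcup_{k\geq n} \widetilde{\Sigma}_\omega^{k}},$$
which is the asserted inclusion, valid for all such $\omega$ (and in particular for all $\omega\in\varOmega$ in the unperturbed case $C\equiv 0$, where $\frac1n\rank(C_\omega^n)=0$).

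The only genuinely delicate point is the measure-theoretic lemma on supports under weak convergence, and even that is routine: the subtlety is merely to use a half-open or open interval (where the portmanteau inequality goes the right way for lower semicontinuity of mass on open sets) rather than a closed one, and to test against an honest continuous compactly supported bump function rather than an indicator, so as to stay within the class of test functions for which Corollary \ref{perturbation-ids} gives convergence. Everything else — the identification of $\Sigma$ with $\mathrm{supp}(k)$, and of $\widetilde{\Sigma}_\omega^{n}$ with $\mathrm{supp}(\widetilde{k}_\omega^{n})$ — has already been supplied. I do not anticipate any real obstacle; the content of the corollary is essentially a packaging of Corollary \ref{perturbation-ids} and Theorem \ref{theorem-support-and-atomfree}.
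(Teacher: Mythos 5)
Your argument is correct and is exactly what the paper has in mind: the corollary is stated there as an immediate consequence of Corollary \ref{perturbation-ids} (weak convergence of $\widetilde{k}_\omega^{n}$ to $k$) together with the identifications $\Sigma=\mathrm{supp}(k)$ from Theorem \ref{theorem-support-and-atomfree} and $\widetilde{\Sigma}_\omega^{n}=\mathrm{supp}(\widetilde{k}_\omega^{n})$. Your bump-function/portmanteau lemma just spells out the routine support argument the paper leaves implicit, so there is nothing to add.
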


The corollary is somewhat unsatisfactory in that it only gives an
inclusion. In certain cases more is known. This is further discussed
in Section \ref{ids-equal-KvNS-trace}. For a general result on how
to construct approximations whose spectra converge with respect to
the Hausdorff distance we refer the reader to  \cite{BBdN}.

\subsection{The spectrum as a set and the absolute continuity of spectral measures
}\label{spectrum-set-ac} In this section we consider Schr\"odinger
operators associated to locally constant functions on  minimal
subhifts. Here, a function $h$ on a subshift $\varOmega$ over a
finite alphabet is \textit{locally constant} if there exists an
$N>0$ such that the value of $h (\omega)$ only depends on the word
$\omega (-N)\ldots \omega (N)$.  A key distinction in our
considerations will  then be whether $(f,g)$ is periodic or not.

\medskip

The overall structure of the  spectrum in the periodic case is
well-known. This can be found in many references, see e.g. the
monograph \cite{Teschl}.

\begin{Theorem}[Periodic case]\label{Theorem-periodic-abstract}
Let  $(\varOmega,T)$ be  a minimal subshift  and $f,g
:\varOmega\longrightarrow \RR$  locally constant  with
$f(\omega)\neq 0$ for all $\omega\in \varOmega$. If $(f,g)$ is
periodic (with period $N$), then the spectra $\Sigma$ of the
associated Schr\"odinger operators consist of finitely many (and not
more than $N$) closed intervals of positive length and all spectral
measures are absolutely continuous with respect to Lebesgue measure.
\end{Theorem}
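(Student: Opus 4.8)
The plan is to bring the Floquet--Bloch theory of periodic Jacobi matrices to bear, since under the stated hypotheses every $H_\omega$ is precisely such an operator. By Theorem~\ref{Constancy} we have $\Sigma = \sigma(H_\omega)$ for any single $\omega\in\varOmega$, so fix one $\omega$, set $a_n := f(T^n\omega)$ and $b_n := g(T^n\omega)$, and record that $a_{n+N}=a_n$, $b_{n+N}=b_n$ and $a_n\neq 0$ for all $n\in\ZZ$. Thus $H_\omega$ commutes with the shift by $N$ on $\ell^2(\ZZ)$, and the first step is the associated direct integral (Floquet) decomposition: conjugating by the Floquet transform $\ell^2(\ZZ)\to L^2([0,2\pi),\CC^N)$ identifies $H_\omega$ with multiplication by a family $\theta\mapsto H_\omega(\theta)$ of Hermitian $N\times N$ matrices depending analytically on $\theta$, namely the cyclic period-$N$ Jacobi matrix with the two wrap-around hoppings multiplied by $e^{\pm i\theta}$. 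Consequently $\Sigma$ equals the union over $\theta\in[0,2\pi)$ of the spectra $\sigma(H_\omega(\theta))$.

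Next comes the band picture. For each $\theta$ the matrix $H_\omega(\theta)$ has $N$ real eigenvalues, and these can be organized into continuous, piecewise real-analytic functions $E_1(\theta)\le\cdots\le E_N(\theta)$ on $[0,2\pi)$ (Rellich's analytic perturbation theory for a self-adjoint analytic one-parameter family). Since $[0,2\pi)$ is connected (and $E_j$ extends periodically), each band $B_j:=E_j([0,2\pi))$ is a compact interval, so $\Sigma=\bigcup_{j=1}^N B_j$ is a union of at most $N$ closed intervals. That the bands have \emph{positive} length is the one place where $f\neq 0$ is genuinely used: writing $T_n(E)$ for the one-step transfer matrices and $\Delta_\omega(E):=\operatorname{tr}\bigl(T_N(E)\cdots T_1(E)\bigr)$ for the discriminant, one has $E\in\sigma(H_\omega(\theta))$ iff $\Delta_\omega(E)=2\cos\theta$, so $\bigcup_j B_j=\Delta_\omega^{-1}([-2,2])$. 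Because $a_n\neq 0$, each $T_n(E)$ is invertible with $\det T_n(E)=a_n/a_{n+1}$, hence the monodromy matrix has determinant $1$ and $\Delta_\omega$ is a real polynomial in $E$ of degree \emph{exactly} $N$, with nonzero leading coefficient $\bigl(\prod_{n=1}^N a_n\bigr)^{-1}$. For a non-constant discriminant of this kind the classical analysis of periodic Jacobi operators (see e.g.\ \cite{Teschl}) shows that $\Delta_\omega^{-1}([-2,2])$ consists of non-degenerate closed intervals, i.e.\ no band collapses to a point.

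For absolute continuity of every spectral measure, the Floquet decomposition exhibits $\mu_\omega^\varphi$, for any $\varphi\in\ell^2(\ZZ)$, as the push-forward under $(\theta,j)\mapsto E_j(\theta)$ of a measure $\sum_{j=1}^N w_j(\theta)\,d\theta$ with non-negative integrable densities $w_j$. Each band function $E_j$ is non-constant (it is a branch of the inverse of the non-constant polynomial $\Delta_\omega$, via $\Delta_\omega(E_j(\theta))=2\cos\theta$), hence piecewise strictly monotone and piecewise smooth with nonvanishing derivative off a finite set; a change of variables then shows the push-forward is absolutely continuous with respect to Lebesgue measure. As this holds for all $\varphi$, the spectrum is purely absolutely continuous, and the write-up will mostly consist of setting up the Floquet transform and quoting \cite{Teschl}.

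The only content-bearing point is the positivity of the band lengths: it is where the hypothesis $f\neq 0$ is indispensable, and it rests on the degree count for $\Delta_\omega$ together with the classical fact that the preimage $\Delta_\omega^{-1}([-2,2])$ of a non-constant discriminant has no isolated points (equivalently, a periodic Jacobi matrix with nonzero off-diagonal entries has no flat bands and no eigenvalues). Everything else — the direct-integral decomposition, continuity of the bands, and the coarea/change-of-variables yielding absolute continuity — is routine and standard.
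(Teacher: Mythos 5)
Your argument is correct and coincides with the route the paper itself relies on: the paper offers no proof of Theorem \ref{Theorem-periodic-abstract}, referring instead to the monograph \cite{Teschl}, where the statement is established by exactly this Floquet--Bloch direct-integral decomposition together with the discriminant analysis (degree-$N$ polynomial with leading coefficient determined by the nonvanishing off-diagonal entries, hence non-degenerate bands and purely absolutely continuous spectral measures). So your write-up is essentially the standard proof the citation points to, and no changes are needed.
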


\begin{Remark} Note that periodicity of $(f,g)$ may have its
origin in both properties of $(\varOmega,T)$ and properties of
$(f,g)$. For example periodicity always occurs if $f = g = 1$
irrespective of the nature of $(\varOmega,T)$. Also, periodicity
occurs for arbitrary $f,g$  if $(\varOmega,T)$ is periodic (i.e.
there exists a natural number $N\neq 0$ with $T^N \omega = \omega$
for all $\omega\in\varOmega$).
\end{Remark}

\medskip

The previous theorem gives rather complete information on $\Sigma$
in the periodic case. In order to deal with the non-periodic case,
we will need a further assumption on $(\varOmega,T)$. This condition
is linear repetitivity.

\begin{Theorem}[Aperiodic case \cite{BP}] \label{Theorem-aperiodic-abstract}
Let  $(\varOmega,T)$ be  a linearly repetitive subshift  and $f,g
:\varOmega\longrightarrow \RR$  locally constant  with
$f(\omega)\neq 0$ for all $\omega\in \varOmega$. If $(f,g)$ is
non-periodic, then  there exists a Cantor set $\Sigma$ of Lebesgue
measure zero in $\RR$ such that
$$\sigma (H_\omega) = \Sigma$$
for all $\omega \in \varOmega$.
\end{Theorem}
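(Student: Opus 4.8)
The plan is to establish the three assertions — that the spectrum is independent of $\omega$, that it is a Cantor set, and that it has Lebesgue measure zero — by combining the general structural results already available with the finer analysis of \cite{BP}. Constancy of $\sigma(H_\omega)$ is immediate: a linearly repetitive subshift is minimal (indeed every $v\in\mbox{Sub}(\varOmega)$ occurs with bounded gaps), so Theorem \ref{Constancy} applies and furnishes the uniform set $\Sigma$. The real content is to show that this $\Sigma$ is a Cantor set of measure zero.

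First I would set up the approximation machinery from Section \ref{basic-schroedinger}. By Theorem \ref{Theorem-unique-ergodicity} the subshift is uniquely ergodic, so all the statements about the integrated density of states $k$ and the finite-volume measures $k_\omega^n$ apply; in particular $\Sigma = \mbox{supp}(k)$ by Theorem \ref{theorem-support-and-atomfree}, and since $f$ does not vanish, $k$ has no atoms, hence $N$ is continuous. The strategy for showing $|\Sigma|=0$ is a trace-estimate / Thouless-type argument: one uses linear repetitivity to produce, along a sequence of scales $n_j\to\infty$ dictated by the $S$-adic (substitution-like) structure, transfer-matrix periodic approximants $\widetilde H_\omega^{n_j}$ obtained by periodization, whose spectra $\widetilde\Sigma_\omega^{n_j}$ are unions of at most $C$ bands (using Theorem \ref{Theorem-periodic-abstract} on the periodized operator). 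The periodization differs from $H_\omega^{n_j}$ by a finite-rank correction of rank bounded independently of $j$, so Corollary \ref{perturbation-ids} and Corollary \ref{Cor-inclusion} give $\Sigma\subset\bigcap_n\overline{\bigcup_{k\ge n}\widetilde\Sigma_\omega^{n_k}}$. One then estimates the total length $|\widetilde\Sigma_\omega^{n_j}|$ of the band edges: linear repetitivity forces the discriminant polynomials at consecutive scales to be related by composition with a fixed-degree polynomial whose critical structure shrinks the total bandwidth by a factor bounded away from $1$ at each step, so $|\widetilde\Sigma_\omega^{n_j}|\to 0$ geometrically, whence $|\Sigma|=0$. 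This is exactly the scheme of \cite{Lenz} as extended to Jacobi matrices in \cite{BP}, and I would cite those for the quantitative trace-map input rather than reprove it.

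To get that $\Sigma$ is a Cantor set one must show it is closed (already known), has empty interior, and has no isolated points. Empty interior is a consequence of Lebesgue measure zero together with $\Sigma$ being the support of the atomless measure $k$: a nonempty open interval in $\Sigma$ would have positive $k$-measure, hence positive Lebesgue measure by absolute-continuity-type control, contradicting $|\Sigma|=0$ — more robustly, empty interior follows directly once $|\Sigma|=0$ since an interval has positive length. Absence of isolated points is where non-periodicity of $(f,g)$ enters decisively: an isolated point of $\sigma(H_\omega)$ would be an eigenvalue of $H_\omega$ of finite multiplicity, and one rules this out by a Gordon-type / minimality argument — linear repetitivity produces arbitrarily long "almost-periods" of the potential around the origin, which via the transfer-matrix two-block trick forces any $\ell^2$ eigenfunction to vanish, so $H_\omega$ has no eigenvalues at all and in particular no isolated spectrum. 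Again this is the content of \cite{BP}, building on Kotani theory and the Gordon argument.

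The main obstacle is the quantitative trace-map estimate that drives $|\widetilde\Sigma_\omega^{n_j}|\to0$: one needs to exploit the precise $S$-adic presentation guaranteed by Durand's characterization of linearly repetitive subshifts, control how the periodic approximants nest across the (finitely many) substitution rules, and show the bandwidth contracts uniformly. I expect to present this by reducing to and quoting Theorem \ref{Theorem-aperiodic-abstract}'s source \cite{BP}, where the extension of \cite{Lenz} from Schrödinger operators ($f\equiv 1$) to the Jacobi setting ($f$ nonvanishing but nonconstant) is carried out; the only genuine work remaining in this excerpt's proof is to verify that our hypotheses — linear repetitivity plus locally constant nonvanishing $f$ and nonperiodic $(f,g)$ — are exactly the hypotheses of that theorem, which they are by construction.
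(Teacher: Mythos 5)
The paper itself gives no proof of this theorem: it is quoted from \cite{BP} (Corollary 4 there), with the accompanying remark explaining that the case $f\equiv 1$ goes back to \cite{Lenz} and \cite{DL1}, and that linear repetitivity implies Boshernitzan's condition (B) under which the cited corollary applies. Your final reduction --- check that the hypotheses match and quote \cite{BP} --- is therefore exactly the paper's route, and the constancy step via minimality and Theorem \ref{Constancy} is also how the uniform set $\Sigma$ arises.

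Two points in your sketch of the internal machinery would not survive as actual proof steps, however. First, the measure-zero mechanism in \cite{Lenz}/\cite{BP} is not a bandwidth-contraction estimate for periodized approximants driven by composition of discriminants (that is a trace-map picture special to substitution systems, and Corollary \ref{Cor-inclusion} only gives an inclusion anyway); it is Kotani theory \cite{Kot} combined with uniform existence of the Lyapunov exponent, the latter supplied by the strong subadditive ergodic theorem that linear repetitivity guarantees \cite{Len3} --- compare the Remark following Theorem \ref{Theorem-unique-ergodicity}. One identifies $\Sigma$ with the set where the Lyapunov exponent vanishes and invokes Kotani's result for aperiodic finite-valued ergodic families to get Lebesgue measure zero. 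Second, your route to absence of isolated points rests on the claim that linear repetitivity forces $H_\omega$ to have no eigenvalues for \emph{every} $\omega$ via a Gordon argument; this is false in general (the Gordon argument needs cubes aligned at the origin, which linear repetitivity does not provide for all $\omega$), it is not what \cite{BP} proves, and the theorem makes no such claim --- indeed the paper's absence-of-eigenvalues statements later are only almost sure or for special points. It is also unnecessary: since $\Sigma=\mathrm{supp}(k)$ and $k$ has no atoms when $f$ does not vanish (Theorem \ref{theorem-support-and-atomfree}), an isolated point of $\Sigma$ would be an atom of $k$, a contradiction; together with closedness and the fact that a set of Lebesgue measure zero has empty interior, this gives the Cantor structure directly.
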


\begin{Remark} The above theorem was first proven in \cite{Lenz}
in the case $f\equiv 1$. This result was then extended in \cite{DL1}
from linearly repetitive subshifts to arbitrary subshifts satisfying
a certain condition known as  Boshernitzan condition (B)  (again for
the case $f \equiv 1$). In the form stated above it can be inferred
from the recent work \cite{BP}, Corollary 4. This corollary treats
the even more general situation, where condition (B) is satisfied.
Condition (B) was introduced by Boshernitzan as a sufficient
condition  for unique ergodicity \cite{Bosh} (see \cite{DL1} for an
alternative approach as well). In our context, we do not actually
need its definition here. It suffices to know that linear
repetitivity implies (B) (see e.g. \cite{DL1}).
\end{Remark}

The previous result deals with the appearance of $\Sigma$ as a set.
It also gives some information on the spectral type.

\begin{Corollary}\label{Corollary-aperiodic-abstract}  Assume the situation of the previous theorem.
Then, no spectral measure  can be absolutely continuous with respect
to the Lebesgue measure.
\end{Corollary}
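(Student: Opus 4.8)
The plan is to combine the Cantor-set structure of $\Sigma$ from Theorem \ref{Theorem-aperiodic-abstract} with the zero-Lebesgue-measure property, and then invoke the absence of absolutely continuous spectrum via a general principle: the absolutely continuous part of a selfadjoint operator is supported on the essential support of the density of states (equivalently, on a set of positive Lebesgue measure). Concretely, since $\Sigma = \sigma(H_\omega)$ has Lebesgue measure zero, any spectral measure $\mu_\omega^\varphi$ is supported on the set $\Sigma$, which is a Lebesgue null set; hence $\mu_\omega^\varphi$ is singular with respect to Lebesgue measure. Therefore it cannot have a nonzero absolutely continuous component, and in particular it is not absolutely continuous (unless it is the zero measure, which it is not for $\varphi \neq 0$).

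First I would recall that, as stated in Section \ref{basic-schroedinger}, the spectrum $\sigma(H_\omega)$ is the smallest closed set containing the support of every spectral measure $\mu_\omega^\varphi$, $\varphi \in \ell^2(\ZZ)$. By the previous theorem, $\sigma(H_\omega) = \Sigma$ for all $\omega$, so $\mathrm{supp}\,\mu_\omega^\varphi \subset \Sigma$. Second, again by the previous theorem, $\Sigma$ has Lebesgue measure zero. Third, I would conclude that $\mu_\omega^\varphi(\RR \setminus \Sigma) = 0$ while the Lebesgue measure of $\Sigma$ is zero; by the very definition of mutual singularity, $\mu_\omega^\varphi \perp \mathrm{Leb}$. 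Finally, a nonzero measure that is singular with respect to Lebesgue measure cannot be absolutely continuous with respect to it, so no spectral measure is absolutely continuous.

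I do not expect any real obstacle here: the statement is essentially a one-line consequence of Theorem \ref{Theorem-aperiodic-abstract} together with the elementary measure-theoretic fact that a measure carried by a Lebesgue null set is Lebesgue-singular. The only point requiring a word of care is that the spectral measures are genuinely nonzero (so that "singular" really does exclude "absolutely continuous"): for $\varphi \neq 0$ one has $\mu_\omega^\varphi(\RR) = \|\varphi\|^2 > 0$, as noted in Section \ref{basic-schroedinger}. One should also note that this argument gives more than stated --- it shows the spectrum is purely singular in the sense that all spectral measures are Lebesgue-singular --- but the absence of absolutely continuous measures is the clean statement to record as a corollary. Combined with Corollary \ref{Cor-inclusion} and the general theory, this is the natural companion to the description of $\Sigma$ as a Cantor set of measure zero.
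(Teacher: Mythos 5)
Your proof is correct and is precisely the argument the paper intends (it leaves the corollary without an explicit proof as an immediate consequence of Theorem \ref{Theorem-aperiodic-abstract}): every spectral measure is supported on $\Sigma = \sigma(H_\omega)$, which has Lebesgue measure zero, hence is singular with respect to Lebesgue measure, and the nonvanishing total mass $\|\varphi\|^2$ for $\varphi \neq 0$ rules out absolute continuity. Your remark that this actually yields pure Lebesgue-singularity of all spectral measures is accurate and consistent with the paper's discussion.
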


\subsection{Aperiodic order and discrete  random Schr\"odinger
operators}\label{aperiodic-order-schroedinger} Schr\"odinger
operators with aperiodic order can be considered within the context
of random Schr\"odinger operators. Indeed, they arise in quantum
mechanical treatment of solids.  As this may be revealing we briefly
present this context in this section. Further discussion and
references can be found e.g. in the textbooks  \cite{CFSK,CL}.

\bigskip

Consider a subshift $(\varOmega,T)$ over the finite alphabet
$\mathcal{A}$. Assume without loss of generality that $\mathcal{A}$
is a subset of the real numbers. Let a $T$-invariant probability
measure $\mu$ on $\varOmega$ be given. To these data we can
associate the  family $(H_\omega)_{\omega\in\varOmega}$ of bounded
selfadjoint operators on $\ell^2 (\ZZ)$ acting via
$$ (H_\omega u)  (n) = u (n+1) + u (n-1) + \omega
(n) u (n).$$ Such operators are (slightly special) cases of the
operators considered in the previous section. They arise in the
quantum mechanical treatment of disordered solids. The solid in
question is modeled by the sequence $\omega\in \varOmega$. The
operator $H_\omega$ then describes the behavior of one electron
under the influence of this $\omega$. More specifically, if the
state of the electron is $u_0\in \ell^2 (\ZZ)$ at time $t = 0$ then
the time evolution is governed by the \textit{Schr\"odinger
equation}
$$ (\partial_t u) (t)  = -i H_\omega u (t),\; u(0) = u_0,$$
This equation has a unique solution given by
$$u (t) = e^{- i t H_\omega} u_0.$$
The behavior of this solution is then linked to the spectral
properties of $H_\omega$.

The two basic pieces of 'philosophy' underlying the investigations
are now the following:

\begin{itemize}

\item Increasing  regularity of the spectral measures  increases the
conductance properties of the solid in question.

\item The more disordered the subshift is the more singular the
spectral measures are.

\end{itemize}

Of course, this has to be taken with (more than) a grain of salt. In
particular, precise meaning has to be given to what is meant by
\textit{regularity and singularity of the spectral measures} and
\textit{conductance properties} and  \textit{disorder in the
subshift}. A large part of the theory is then devoted to giving
precise sense to these concepts and then prove (or disprove)
specific formulations of the mentioned two pieces of philosophy.

Regarding the first point of the philosophy we mention
\cite{Gua1,Gua2,GSB,Las} as basic references for proofs of  lower
bounds on transport via quantitative continuity of the spectral
measures.

As for the second point of the philosophy, the  two - in some sense
- most extremal cases are given by periodic subshifts representing
the maximally ordered case on the one hand and the Bernoulli
subshift (with uniform measure) on the other hand representing the
maximally disordered case.

The periodic situation can be thought of as one with maximal order.
As discussed in the previous section the spectral measures are all
absolutely continuous  (hence not at all singular) and the spectrum
consists of non-trivial intervals. These intervals are known in the
physics literature as (conductance) bands.

The Bernoulli subshift can be thought of as having maximal disorder.
In this case all  spectral measures turn out to be pure point
measures and the spectrum  is pure point spectrum with the
eigenvalues densely filling suitable intervals. We refer to the
monographs \cite{CFSK,CL} for details and further references.

The subshifts considered in the previous section are characterized
by some intermediate form of disorder. They are not periodic.
However, they are still minimal and uniquely ergodic and have very
low complexity. So they are close to the periodic situation (or
rather well  approximable by periodic models with bigger and bigger
periods). Accordingly, one can expect the following  spectral
features of the associated Schr\"odinger operators:

\begin{itemize}

\item Absence of absolutely continuous spectral measures (due to the
presence of disorder i.e. the lack of periodicity).

\item Absence of point spectrum (due to the closeness to the
periodic case).

\item Cantor spectrum of Lebesgue measure zero (as a consequence of
approximation by periodic models with bigger and bigger periods and,
hence, more and more gaps).

\end{itemize}

Indeed, a large part of the theory for Schr\"odinger operators with
aperiodic order  is devoted to proving these features (as well as
more subtle properties) for specific classes of models. Further
details and references can be found in the surveys  \cite{Dam,DEG}.
Here, we emphasize that also our discussion of the operators
associated to a certain substitution generated subshift below will
be focused on establishing the above features.

\section{The substitution $\tau$, its finite words $\mbox{Sub}_\tau$ and its subshift
$(\varOmega_\tau,T)$}\label{The-substitution}
 In this section we
study the two-sided subshift $(\varOmega_\tau, T)$  induced by a
particular substitution $\tau$ on  $\mathcal{A} = \{a,x,y,z\}$ with
$$\tau (a) = a x a, \tau (x) = y,\; \tau (y) = z, \;\tau (z) = x.$$
 The one-sided subshift induced by this substitution had
already been studied by Vorobets \cite{Vor1}. Some of our  results
can be seen as providing the two-sided counterparts to his
investigations. The key ingredient in the investigations of
\cite{Vor1} is that the arising one-sided sequences can be
considered as Toeplitz sequences. This is equally true in our case
of two-sided sequences. Thus, it  seems very likely that one could
also base our analysis of the corresponding features on the
connection to Toeplitz sequenes. Here, we will present a different
approach based the  $n$-decomposition and $n$-partition introduced
in \cite{GLN}.

The subshift $(\varOmega_\tau,T)$ will be of crucial importance for
us as it will turn out that the Schr\"odinger operators associated
to it are unitarily equivalent to the Laplacians on the Schreier
graphs of the Grigorchuk's group $G$.

While we do not use it in the sequel we would like to highlight that
the substitution in question has already earlier appeared in the
study of Grigorchuk's group $G$.  Indeed, it  is (a version of) the
substitution used by Lysenok \cite{Lys} for getting a presentation
of Grigorchuk's group $G$. More specifically, \cite{Lys} gives that
$$G = \langle a,b,c,d | 1 = a^2 = b^2 = c^2 = d^2 = \kappa^k ((ad)^4)
 =\kappa^k ((adacac)^4), k = 0,1,2,....\rangle,$$
where $\kappa$ is the substitution on $\{a,b,c,d\}$ obtained from
$\tau$ by replacing $x$ by $c$, $y$ by $b$ and $z$ by $d$.

\subsection{The  substitution  $\tau$ and its subshift: basic features}
Let the alphabet $\mathcal{A} = \{a,x,y,z\}$ be given and let $\tau$
be  the substitution mentioned above  mapping
 $a\mapsto a x a$,
$x\mapsto y$, $y\mapsto z$, $z\mapsto x$. Let $\mbox{Sub}_\tau$ be
the associated set of words given by
$$\mbox{Sub}_\tau =\bigcup_{w\in \mathcal{A}, n\in \NN\cup\{0\} } \mbox{Sub}(\tau^n
(w)).$$ Then, the following three properties obviously  hold true:

\begin{itemize}

\item The letter $a$ is a prefix of $\tau^n (a)$ for any $n\in
\NN\cup\{0\}$.

\item The lengths $|\tau^n (a)|$ converge to $\infty$ for $n\to
\infty$.

\item Any letter of $\mathcal{A}$ occurs in $\tau^n(a)$ for some  $n$.

\end{itemize}

By the first two properties $\tau^n (a)$ is a prefix of $\tau^{n+1}
(a) $ for any $n\in\NN\cup\{0\}$. Thus, there exists a unique
one-sided infinite word $\eta$ such that $\tau^n (a)$ is a prefix of
$\eta$ for any $n\in\NN\cup\{0\}$. This $\eta$ is then  a fixed
point of $\tau$
% the limit $$\omega_{\tau}:=\lim_{n\to \infty} \tau^n (a)\in\mathcal{A}^{\NN}$$
% exists.  Here, the limit means that $\tau^n(a)$ is a prefix of
% $\omega_{\tau}$ for an $n\in \NN$. This $\omega_{\tau}$ is a fixed
% point of $\tau$
i.e. $\tau (\eta) = \eta$. We will refer to it as \textit{the fixed
point of the substitution $\tau$}. Clearly, $\eta$ is then a fixed
point of $\tau^n$ as well for any natural number $n$.

By the third property we then have
$$\mbox{Sub}_\tau = \mbox{Sub} (\eta).$$
We can now associate to $\tau$ the subshift
$$\varOmega_\tau:=\{ \omega\in \mathcal{A}^\ZZ : \mbox{Sub} (\omega)\subset
\mbox{Sub}_\tau\}.$$ Note that every other letter of $\eta$ is an
$a$ (as can easily be seen). Thus, $a$ occurs in $\eta$ with bounded
gaps. This implies that any word of $\mbox{Sub}_\tau$ occurs with
bounded gaps (as the word is a subword of $\tau^n (a)$ and $\eta$ is
a fixed point of $\tau^n$). For this reason $(\varOmega_\tau,T)$ is
minimal and $\mbox{Sub}(\omega) = \mbox{Sub}_\tau$ holds for any
$\omega \in \varOmega_\tau$. We can then apply Theorem 1 of
\cite{DL2} to obtain the following.

\begin{Theorem}\label{Theorem-tau-linear-repetitive}
The subshift $(\varOmega_\tau,T)$ is linearly repetitive. In
particular, $(\varOmega_\tau,T)$ is uniquely ergodic and minimal.
\end{Theorem}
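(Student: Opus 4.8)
The plan is to obtain linear repetitivity as an application of the characterization of linearly repetitive substitution subshifts in Theorem~1 of \cite{DL2}; the ``in particular'' clause then follows at once, minimality having already been established above and unique ergodicity being a consequence of Theorem~\ref{Theorem-unique-ergodicity}. So the task reduces to recording the facts needed to invoke \cite{DL2}: that $\varOmega_\tau$ is the subshift of the substitution $\tau$, that $\tau$ admits the fixed point $\eta$ with $\mbox{Sub}_\tau = \mbox{Sub}(\eta)$, and that $(\varOmega_\tau,T)$ is minimal with $\mbox{Sub}(\omega) = \mbox{Sub}_\tau$ for every $\omega$ --- all of which is in place from the preceding discussion.

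The point worth emphasising, and the reason one appeals to \cite{DL2} rather than to the classical theory, is that $\tau$ is \emph{not} primitive: it permutes $x,y,z$ cyclically, so $|\tau^n (x)| = |\tau^n (y)| = |\tau^n (z)| = 1$ for all $n$ and the substitution matrix is reducible, whence the standard implication ``primitive $\Rightarrow$ linearly repetitive'' does not apply directly. What replaces primitivity here is the self-similar recursion $\tau^n (a) = \tau^{n-1}(a)\,\tau^{n-1}(x)\,\tau^{n-1}(a)$, in which the middle letter $\tau^{n-1}(x)$ cycles through $x,y,z$ with period three; hence $|\tau^n (a)| = 2^{n+1}-1$ grows geometrically, and since already $\tau^3 (a) = axayaxazaxayaxa$ contains every letter of $\mathcal{A}$, each letter occurs in every $\tau^n (a)$ with $n \geq 3$. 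Crucially, every occurrence of one of the non-growing letters $x,y,z$ in $\eta$ (and so in any $\omega \in \varOmega_\tau$) is isolated, wedged between two blocks containing $a$, which is precisely the structural feature that prevents the pathologies available to non-primitive substitutions. Checking that the explicit data just described meet the hypotheses of Theorem~1 of \cite{DL2} is the main --- and essentially the only --- step; I expect the sole difficulty to be the bookkeeping of matching those hypotheses, stated there for general substitutions, to the concrete behaviour of $\tau$.

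If one preferred to avoid \cite{DL2} altogether, the defining inequality of linear repetitivity can also be verified directly from the same structure: given $v \in \mbox{Sub}_\tau$, choose $n$ with $|\tau^n (a)|$ of the order of $|v|$; the recursion above forces $v$ to occur inside $\tau^{n+c}(a)$ for a bounded $c$, and along any $\omega \in \varOmega_\tau$ consecutive copies of the block $\tau^{n+c}(a)$ are separated by boundedly many letters --- this is the $n$-decomposition / $n$-partition of \cite{GLN} --- which yields a uniform constant $C$. Either way, it is the geometric growth $|\tau^n (a)| = 2^{n+1}-1$ together with the bounded-gap occurrence of the letter $a$ that makes $C$ independent of $v$. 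Finally, linear repetitivity gives unique ergodicity by Theorem~\ref{Theorem-unique-ergodicity}, and with minimality already known the ``in particular'' statement follows.
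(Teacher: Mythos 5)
Your proposal follows essentially the same route as the paper: minimality (and $\mbox{Sub}(\omega)=\mbox{Sub}_\tau$ for all $\omega$) is established from the bounded-gap occurrence of $a$ in the fixed point $\eta$, linear repetitivity is then obtained by invoking Theorem~1 of \cite{DL2}, which applies to minimal subshifts of arbitrary (in particular non-primitive) substitutions, and unique ergodicity follows from Theorem~\ref{Theorem-unique-ergodicity}. Your added remarks on the failure of primitivity and the sketched direct argument via the recursion $p^{(n+1)}=p^{(n)}s_np^{(n)}$ and the $n$-decomposition are correct but not needed beyond what the paper does.
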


\begin{Remark} It is well-known that subshifts associated to
primitive substitutions are linearly repetitive (see e.g.
\cite{DHS,DZ}). Theorem 1 of  \cite{DL2} shows that linear
repetitivity in fact holds for  subshifts associated to any
substitution provided minimality holds. Unique ergodicity is then a
direct consequence of linear repetitivity due to Theorem
\ref{Theorem-unique-ergodicity}.
\end{Remark}

Our further considerations will be based on a more careful study of
the $\tau^n (a)$. We set
$$p^{(0)}:= a\;\:\mbox{and}\;\:
 p^{(n)} :=\tau^n (a)\;\: \mbox{for}\;\: n\in \NN.$$
  A direct calculation gives
$$p^{(n+1)} = \tau^{n+1} (a) = \tau^n (axa) =\tau^n (a) \tau^n
(x)\tau^n (a) = p^{(n)} \tau^n (x) p^{(n)}.$$ Thus, the following
\textit{recursion formula} for the $p^{(n)}$
$$ (RF)\;\:\hspace{1cm}  p^{(n+1)} =  p^{(n)}  s_n p^{(n)}$$ with
$$ s_n = \tau^n (x) =\left\{ \begin{array}{ccc}  x &:& n = 3 k, k\in \NN\cup \{0\} \\ y &:& n = 3k+1, k\in \NN\cup\{0\}\\
z&:& n = 3 k + 2, k \in \NN \cup\{0\}  \end{array} \right. $$ is
valid\label{recursion}.

This recursion formula is a very powerful tool. This will become
clear in the subsequent sections. Here we first note that it implies
$$|p^{(n)}| = 2^{n+1} -1$$
for all $n\geq 0$. We will now   use it to present a formula for the
occurrences of the $x,y,z$ in $\eta$ and to introduce some special
elements in $\varOmega_\tau$.

\begin{Proposition}[Positions of $a,x,y,z$ in $\eta$]
Consider the fixed point  $\eta = \eta_1 \eta_2 \ldots...$ of $\tau
$ on $\mathcal{A}^\ZZ$. Then the following holds.

\begin{itemize}

\item The letter  $a$ occurs  exactly at the positions  $ 1 + 2 k$, $k\in \NN\cup\{0\}$ (i.e. at the odd positions).

\item The letter $x$ occurs  exactly at the positions of the form $2^{3n +1}  + k \cdot
2^{3n +2}$, $n,k\in \NN\cup \{0\}$ (i.e. at the positions of the
form $2^{3n +1} \cdot  m$ with   $m$ an odd integer and $n\in
\NN\cup\{0\}$ arbitrary).
\item The letter $y$ occurs exactly at the positions of the form $2^{3n +2}  + k\cdot
2^{3n +3}$, $n,k\in \NN\cup \{0\}$ (i.e. at the positions of the
form $2^{3n +2} \cdot  m$ with $m$ an odd integer and $n\in
\NN\cup\{0\}$ arbitrary).
\item The letter $z$ occurs exactly  at the positions of the form $2^{3n +3}  + k\cdot
2^{3n + 4}$, $n,k\in \NN\cup \{0\}$ (i.e. at the positions of the
form $2^{3n +3} \cdot m$ with $m$ an odd integer and $n\in
\NN\cup\{0\}$ arbitrary).
\end{itemize}
\end{Proposition}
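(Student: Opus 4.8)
The plan is to prove all four statements simultaneously by induction on the length $|p^{(n)}| = 2^{n+1}-1$, reading off the positions of letters inside the prefixes $p^{(n)} = \tau^n(a)$ of $\eta$ and using the recursion formula $(RF)$: $p^{(n+1)} = p^{(n)} s_n p^{(n)}$ with $s_n = x,y,z$ according to $n \bmod 3$. The statement about $a$ is the easiest: one shows by induction that $p^{(n)}$ has letter $a$ in each odd position and a non-$a$ letter in each even position. This follows immediately from $(RF)$: if $p^{(n)}$ has this property and has odd length $2^{n+1}-1$, then in $p^{(n+1)} = p^{(n)} s_n p^{(n)}$ the first block occupies positions $1,\dots,2^{n+1}-1$ (odd pattern preserved), the letter $s_n$ sits at the even position $2^{n+1}$, and the second block occupies positions $2^{n+1}+1,\dots,2^{n+2}-1$, which again puts $a$'s at the odd positions since the offset $2^{n+1}$ is even.

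Next I would pin down where the \emph{single} middle letter $s_n$ of $p^{(n+1)}$ sits: from $(RF)$ it occurs at position $2^{n+1}$, and this is the only occurrence of $s_n$ that is ``born'' at level $n+1$. More precisely, I claim the occurrences of $x$, $y$, $z$ in the infinite word $\eta$ are exactly the union over $n \geq 0$ of the middle-letter positions of all the level-$n$ blocks as they tile $\eta$. Here one uses that $\eta$, being the fixed point, decomposes as a concatenation of copies of $p^{(n)}$ interspersed with the letters $s_n$ — indeed iterating $(RF)$ shows $\eta = p^{(n)} s_n p^{(n)} s_{n+1} p^{(n)} s_n p^{(n)} s_{n+2}\cdots$, and in fact (this is the key structural fact, and the one I expect to be the main obstacle to state cleanly) the positions of the letters $s_n = \tau^n(x)$ in $\eta$ form the arithmetic progression $2^{n+1} + k\cdot 2^{n+2}$, $k \geq 0$; equivalently, $\tau^n(x)$ occupies exactly the positions $2^{n+1}\cdot m$ with $m$ odd. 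This should be proved by induction on $n$: apply $\tau$ to the level-$n$ description. Since $\tau$ triples the length of $a$-blocks but the whole $\eta$ is a fixed point, one checks that a letter at position $j$ in $\eta$ maps to a letter at position $2j$ (because $|\tau(ax a)|$... — rather, because the prefix $p^{(n)}$ of length $2^{n+1}-1$ maps to the prefix $p^{(n+1)}$ of length $2^{n+2}-1$, and more carefully the map on positions induced by $\tau$ sends the $k$-th letter of $\eta$ to position $2k$ if that letter is $a$, with the inserted $x$'s filling the gaps). Tracking this doubling of positions turns the progression $2^{n+1} + k\cdot 2^{n+2}$ for $s_n$ into $2^{n+2} + k\cdot 2^{n+3}$ for $\tau(s_n) = s_{n+1}$, which is exactly the induction step.

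Finally I would assemble the three statements about $x$, $y$, $z$: the letter $x$ is $s_n$ precisely when $n \equiv 0 \pmod 3$, i.e. $n = 3m$, giving positions $2^{3m+1} + k\cdot 2^{3m+2}$; similarly $y = s_n$ for $n = 3m+1$ giving $2^{3m+2} + k\cdot 2^{3m+3}$, and $z = s_n$ for $n = 3m+2$ giving $2^{3m+3} + k\cdot 2^{3m+4}$. One should also check these three progressions, together with the odd positions carrying $a$, exactly partition $\NN$: an even integer $j$ can be written uniquely as $2^{e}\cdot m$ with $m$ odd and $e \geq 1$, and the residue of $e \bmod 3$ determines which of $x,y,z$ it is — so every position is accounted for exactly once, confirming the ``exactly at the positions'' claims rather than mere containments. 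The base case $n=0$, $p^{(0)} = a$, $p^{(1)} = axa$, is a direct check. The main work, and the only place real care is needed, is the position-doubling bookkeeping under $\tau$ in the middle paragraph; everything else is routine.
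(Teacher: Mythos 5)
Your proposal is correct in substance and shares the paper's crucial reduction: the four families of positions are pairwise disjoint and cover $\NN$, so it suffices to prove that the stated letters \emph{occur} at the stated positions, the ``exactly'' then being automatic. Where you genuinely differ is in how the containment is obtained. The paper reads it off directly from the $(3n+1)$-decomposition: repeated use of (RF) gives $\eta = p^{(3n+1)} r_1 p^{(3n+1)} r_2 \cdots$ with $r_j\in\{x,y,z\}$, and since $p^{(3n+1)} = p^{(3n)} x p^{(3n)}$ has $x$ at its centre, $x$ occurs at $|p^{(3n)}|+1+k\bigl(|p^{(3n+1)}|+1\bigr) = 2^{3n+1}+k\cdot 2^{3n+2}$ --- two lines, no induction on the exponent. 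You instead run an induction on $n$ through the action of $\tau$ on positions: since $\tau(\eta)=\eta$, the letter at an even position $j$ is sent to position $2j$, and the $x$'s inserted inside the images $axa$ of the $a$'s occupy the positions $2m$ with $m$ odd; iterating yields that the letter at $2^{e}m$ ($m$ odd) is $s_{e-1}=\tau^{e-1}(x)$. This works, and the doubling bookkeeping you flag as the delicate point is in fact a one-line length count (the $\tau$-image of $\eta_1\cdots\eta_j$ has length $2j$ for $j$ even and $2j+1$ for $j$ odd); what your route buys is a cleaner global picture of the Toeplitz/ruler structure of $\eta$ in one statement, at the cost of more machinery than the paper's direct reading of the block decomposition.

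One sentence of yours needs repair: ``the positions of the letters $s_n=\tau^n(x)$ in $\eta$ form the arithmetic progression $2^{n+1}+k\cdot 2^{n+2}$'' is false if read as an ``exactly'' statement, because the letter $s_n$ coincides with $s_{n+3}$ (e.g. $x=s_0=s_3$ also occupies the positions $2^{4}m$, $m$ odd). What your induction actually proves, and all you need, is the containment ``the letter at every position $2^{n+1}m$, $m$ odd, is $s_n$''; your closing partition-of-$\NN$ argument then correctly upgrades this to the proposition's ``exactly''. With that rephrasing the proposal is sound.
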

\begin{proof} We first note that the given sets of positions are
pairwise disjoint and cover $\NN $. Thus, it suffices to show that
the mentioned letters occur at these positions.

The statement for $a$ is clear. The statements for $x,y,z$ can all
be proven similarly. Thus, we only discuss the statement for $x$.
Repeated application of (RF)  shows that
$$\eta = p^{(3n +1)} r_1 p^{(3n +1)}  r_2 p^{(3n +1)} r_3 ...$$
with $r_1,r_2,...\in \{x,y,z\}$.  Moreover, (RF) implies
$$p^{(3n +1)} = p^{(3n)} x p^{(3n)}.$$
Combining these formula we see that $x$ must occur at all positions
of the form
$$ |p^{(3n)}| +1 +  k (|p^{(3n+1)}| +1) = 2^{3n +1} + k \cdot 2^{3n +2}.$$
This finishes the proof.
\end{proof}

\begin{Remark} The previous proposition shows that $\eta$ is a
\textit{Toeplitz sequence} (with periods of the form  $2^l$ for
$l\in \NN$). As mentioned already the analysis of the one-sided
subshift in \cite{Vor1} is based on this property.
\end{Remark}

We now head further to use (RF) to introduce  some special two-sided
sequences. As is not hard to see from (RF), for any $n\in
\NN\cup\{0\}$ and any single letter $s\in\{x,y,z\}$ the word
$p^{(n)} s p^{(n)}$ occurs in $\eta$. Thus, for any $s\in\{x,y,z\}$
there exists a unique element $\omega^{(s)}\in\varOmega_\tau$  such
that
$$\omega^{(s)} = ... p^{(n)} s | p^{(n)}...$$ holds for all natural
numbers $n$, where the $|$ denotes the position of the origin.  The
elements $\omega^{(x)},\omega^{(y)},\omega^{(z)}\in\varOmega_\tau$
will play an important role in our subsequent analysis. They clearly
have the property that they agree on $\NN$ with $\eta$.
 Indeed, they can be shown   to be  exactly those  elements of $\varOmega_\tau$ which agree
with $\eta$ on $\NN$ (see below).  Here, we already note that these
three sequences  are different. Thus, $\varOmega_\tau$ contains
different sequences,  which agree on $\NN$. Hence, $\varOmega$  is
not periodic.

We finish this section by noting a certain reflection invariance of
our system. Recall that a non-empty  word $w = w_1\ldots
w_n\in\mathcal{A}^\ast$ with $w_j\in \mathcal{A}$ is called a
\textit{palindrome} if $w = w_n \ldots w_1$.
 An easy induction using (RF) shows that for any $n\in \NN\cup\{0\}$  the word  $p^{(n)}$ is a palindrome.  It starts and ends with $p^{(k)}$ for any $k\in \NN\cup\{0\}$ with
$k\leq n$. As each $p^{(n)}$ is a palindrome and any word belonging
to $\mbox{Sub}_\tau$ is a subword of some $p^{(n)}$ we immediately
infer that $\mbox{Sub}_\tau$ is closed under reflections in the
sense that the following proposition holds.

\begin{Proposition} \label{Prop-palindrome} Whenever $w = w_1\ldots w_n\in\mathcal{A}^\ast$
with $w_j\in \mathcal{A}$  belongs to $\mbox{Sub}_\tau$ then so does
$\widetilde{w}:= w_n\ldots w_1$.
\end{Proposition}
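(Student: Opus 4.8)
The plan is to reduce the statement about arbitrary words of $\mbox{Sub}_\tau$ to the single structural fact that each $p^{(n)}$ is a palindrome, which in turn follows by induction from the recursion formula (RF). First I would recall that, by the discussion preceding the proposition, every $w \in \mbox{Sub}_\tau$ is a subword of $p^{(n)}$ for some $n \in \NN \cup \{0\}$: indeed $\mbox{Sub}_\tau = \mbox{Sub}(\eta)$ and $\eta$ has each $p^{(n)}$ as a prefix, so any finite subword of $\eta$ lies inside some prefix $p^{(n)}$ once $n$ is large enough that $|p^{(n)}| = 2^{n+1}-1$ exceeds the position of the last letter of the occurrence. Hence it suffices to show that the reversal of a subword of a palindrome is again a subword of that palindrome, together with the claim that every $p^{(n)}$ is a palindrome.

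The second ingredient — that $p^{(n)}$ is a palindrome for all $n$ — I would prove by induction on $n$ using (RF). The base case $p^{(0)} = a$ is immediate. For the inductive step, suppose $p^{(n)}$ is a palindrome; since $s_n \in \{x,y,z\}$ is a single letter (hence trivially a palindrome) and $p^{(n+1)} = p^{(n)} s_n p^{(n)}$, the reversal of $p^{(n+1)}$ is $\widetilde{p^{(n)}}\, \widetilde{s_n}\, \widetilde{p^{(n)}} = p^{(n)} s_n p^{(n)} = p^{(n+1)}$, using the general fact that $\widetilde{uv} = \widetilde{v}\,\widetilde{u}$ and the induction hypothesis. (The remark in the text that $p^{(n+1)}$ starts and ends with $p^{(k)}$ for $k \le n+1$ is a convenient byproduct but is not strictly needed here.)

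It remains to assemble the two pieces. Given $w = w_1 \ldots w_m \in \mbox{Sub}_\tau$, pick $n$ with $w = p^{(n)}_{i+1} \ldots p^{(n)}_{i+m}$ for some offset $i$, writing $p^{(n)} = p^{(n)}_1 \ldots p^{(n)}_N$ with $N = 2^{n+1}-1$. Since $p^{(n)}$ is a palindrome, $p^{(n)}_j = p^{(n)}_{N+1-j}$ for every $j$, so the block $p^{(n)}_{N-i-m+1} \ldots p^{(n)}_{N-i}$ is precisely $w_m \ldots w_1 = \widetilde{w}$; being a subword of $p^{(n)}$, it lies in $\mbox{Sub}(p^{(n)}) \subset \mbox{Sub}_\tau$. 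This completes the argument. There is no real obstacle here: the only point requiring a moment's care is the index bookkeeping in this last step — making sure the reflected occurrence of $w$ inside the palindrome $p^{(n)}$ is located at the mirror-image position — and, more conceptually, noting that $s_n$ being a single letter is what makes the inductive step for palindromicity go through cleanly.
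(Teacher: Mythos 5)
Your proposal is correct and follows essentially the same route as the paper: one shows by induction via the recursion formula (RF) that each $p^{(n)}$ is a palindrome, notes that every word of $\mbox{Sub}_\tau$ is a subword of some $p^{(n)}$, and concludes that the reflected word occurs at the mirror-image position inside the same $p^{(n)}$. The extra details you supply (why every word sits inside some $p^{(n)}$, and the index bookkeeping) are exactly the steps the paper leaves implicit.
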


\subsection{The main ingredient for our further analysis:  $n$-partition and
$n$-decomposition}\label{Section-n-partition} As a direct
consequence of the definitions we obtain that for any
$n\in\NN\cup\{0\}$ the word $\eta$ has a (unique) decomposition as
$$ \eta = p^{(n)} r_1^{(n)} p^{(n)} r_2^{(n)}....$$
with $r_j^{(n)}\in \{x,y,z\}$. Clearly, this decomposition can be
thought of as  a way of writing $\eta$ with 'letters' from the
alphabet $\mathcal{A}_n= \{\tau^n (a), \tau^n (x),\tau^n (y),\tau^n
(z)\} = \{p^{(n)}, x,y,z\}$. Moreover, setting $r_j := r_j^{(0)}$ we
have $r_j^{(n)}= \tau^n (r_j)$ for any $j\in\NN$. This  way of
writing $\eta$  will be called the \textit{$n$-decomposition of
$\eta$}.  It turns out that an analogous  decomposition can actually
be given for any element $\omega\in \varOmega_\tau$. This will be
discussed in this section.

\medskip

Specifically, we will discuss next  that each
$\omega\in\varOmega_\tau$ admits for each $n\in\NN\cup\{0\}$ a
unique decomposition of the form
$$\omega =  ... p^{(n)} s_0 p^{(n)} s_1 p^{(n)} s_2...$$
with
\begin{itemize}
\item $s_k\in\{x,y,z\} $ for all $k\in \ZZ$,
\item the origin $\omega_0$ belongs to $s_0 p^{(n)}$.
\end{itemize}
Such a decomposition will be referred to as
\textit{$n$-decomposition} of $\omega$. A short moment's thought
reveals that if such a decomposition exists at all, then it is
uniquely determined by the position of any of the $s_j$'s in
$\omega$. Moreover, the positions of the $s_j$'s are given by $p +
2^{n+1} \ZZ$ with $p\in \{0,\ldots, 2^{n+1}-1\}$. Thus, the
positions are given by an element of $\ZZ / 2^{n+1} \ZZ$. This
suggests the following definition.

\begin{Definition}[$n$-partition] For  $n\in \NN\cup\{0\}$ we call
an element $P\in  \ZZ / 2^{n+1} \ZZ $ an  $\textit{n-partition}$ of
$\omega\in \varOmega_\tau$ if for any $q\in P$ both
\begin{itemize}
\item $\omega_{q} \in \{x,y,z\}$ and
\item $\omega_{q+1} \ldots \omega_{q+2^{n+1} -1} = p^{(n)}$
\end{itemize}
hold.
\end{Definition}
Clearly, for each $\omega \in\varOmega_\tau$, existence (uniqueness)
of an $n$-partition is equivalent to existence (uniqueness) of an
$n$-decomposition. In this sense these two concepts are equivalent.
It is not apparent that such an $n$-partition exists at all. Here is
our corresponding  result.

\begin{Theorem}[Existence and Uniqueness of $n$-partitions \cite{GLN}]
\label{theorem-n-partition}  Let $n\in\NN\cup\{0\}$ be given. Then
any $\omega\in \varOmega_\tau$ admits a unique $n$-partition $
P^{(n)} (\omega)$ and the map
$$ P^{(n)} : \varOmega_\tau \longrightarrow \ZZ / 2^{n+1} \ZZ, \; \omega \mapsto P^{(n)} (\omega),$$ is
continuous and equivariant (i.e. $P^{(n)} (T\omega) = P^{(n)}
(\omega) + 1$).
\end{Theorem}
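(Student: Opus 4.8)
I would prove existence and uniqueness of the $n$-partition by induction on $n$, using the recursion formula (RF) as the engine. The base case $n=0$ asks us to write every $\omega\in\varOmega_\tau$ as $\ldots p^{(0)}s_0 p^{(0)}s_1\ldots$ with $s_k\in\{x,y,z\}$ and $p^{(0)}=a$; equivalently, to show that every other letter of $\omega$ is an $a$ and the remaining letters lie in $\{x,y,z\}$, with a well-defined parity (element of $\ZZ/2\ZZ$) telling us which positions carry the $a$'s. This follows from the Proposition on positions of $a,x,y,z$ in $\eta$ together with minimality (so $\mathrm{Sub}(\omega)=\mathrm{Sub}_\tau$ for every $\omega$): the constraint that $aa$ and $ss'$ with $s,s'\in\{x,y,z\}$ never occur as subwords forces the letters to alternate between $a$ and $\{x,y,z\}$, which fixes the parity $P^{(0)}(\omega)$ uniquely; the continuity and equivariance of $P^{(0)}$ are then immediate from this description.

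For the inductive step, I would assume each $\omega\in\varOmega_\tau$ has a unique $n$-partition and hence a unique $n$-decomposition $\omega=\ldots p^{(n)}s_0 p^{(n)}s_1 p^{(n)}s_2\ldots$ with $s_k\in\{x,y,z\}$. The key observation is that, reading this decomposition, the bi-infinite word $\ldots s_0 s_1 s_2\ldots$ over $\{x,y,z\}$ must itself be (up to applying $\tau^n$ and stripping) a "derived" sequence lying in a subshift governed by (RF) at the next level. Concretely, (RF) gives $p^{(n+1)}=p^{(n)}s_n p^{(n)}$, so grouping the $n$-decomposition in blocks of the form $p^{(n)}\,s\,p^{(n)}$ versus the "separator" letters should exhibit an $(n+1)$-decomposition. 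The point to nail down is which of the $s_k$ in the $n$-decomposition play the role of the central letter of a $p^{(n+1)}=p^{(n)}s_n p^{(n)}$ block and which play the role of the separators $r_j^{(n+1)}=\tau^{n+1}(r_j)$: this is again a parity condition, now on the index $k$ of the $s_k$'s, and it is pinned down by the constraint (coming from $\mathrm{Sub}_\tau$) on which two-letter patterns $s_k s_{k+1}$ are allowed. I would isolate this as a lemma describing $\mathrm{Sub}_\tau\cap\{x,y,z\}^\ast$, or rather the admissible transitions, and show it forces exactly one consistent way to parenthesize, giving existence of the $(n+1)$-partition; the new partition $P^{(n+1)}(\omega)$ refines $P^{(n)}(\omega)$ in the sense that reduction $\ZZ/2^{n+2}\ZZ\to\ZZ/2^{n+1}\ZZ$ sends it to $P^{(n)}(\omega)$, which both constrains it to two possible values and, via the parity lemma, selects one.

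Uniqueness at level $n+1$ follows because any $(n+1)$-decomposition of $\omega$ visibly induces an $n$-decomposition (each $p^{(n+1)}$ contains two copies of $p^{(n)}$ by (RF)), which by the inductive hypothesis is \emph{the} $n$-decomposition; then the ambiguity in promoting the unique $n$-decomposition to an $(n+1)$-one is exactly the parity choice resolved above. For continuity and equivariance of $P^{(n+1)}$: equivariance $P^{(n+1)}(T\omega)=P^{(n+1)}(\omega)+1$ is immediate from the defining property (shifting $\omega$ by one shifts all block positions by one), and continuity follows because $P^{(n+1)}(\omega)$ is determined by a finite window of $\omega$ — indeed, knowing $\omega$ on an interval of length, say, $3\cdot 2^{n+2}$ around the origin already exhibits enough full $p^{(n+1)}$-blocks and separators to read off the partition class.

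\textbf{Main obstacle.} The delicate point is the parity/transition lemma: one must show that the finite words over $\{x,y,z\}$ arising as the "separator sequences" in $\mathrm{Sub}_\tau$ admit a \emph{unique} consistent coarse-graining into $p^{(n+1)}$-blocks plus level-$(n+1)$ separators — equivalently, that the forbidden patterns (which pairs $ss'\in\{x,y,z\}^2$ and which triples never occur in $\eta$) are rigid enough to force the block structure. This is essentially where the Toeplitz/odometer nature of $\eta$ enters, and it is the step where one genuinely uses (RF) beyond bookkeeping; everything else is induction and the finite-window argument for continuity.
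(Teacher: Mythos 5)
The paper itself contains no proof of this theorem (it is quoted from \cite{GLN}), so your argument has to stand on its own. Its skeleton is reasonable: the base case, the observation that any $(n+1)$-partition reduces modulo $2^{n+1}$ to an $n$-partition (so by induction the only possible ambiguity is the half-period shift), and equivariance are all fine. Note also that continuity is automatic once existence and uniqueness are known: the defining conditions of an $n$-partition involve only finitely many coordinates at a time and are therefore closed under pointwise convergence, so if $\omega_j\to\omega$ with $P^{(n)}(\omega_j)$ constant along a subsequence, that constant class is an $n$-partition of $\omega$ and hence equals $P^{(n)}(\omega)$. Your explicit window-size claim is thus unnecessary and, as stated, quietly presupposes the very local rigidity you have not established.

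The genuine gap is the step you yourself call the main obstacle, and it is not a routine transition lemma. The constraints you need do not live in $\mbox{Sub}_\tau\cap\{x,y,z\}^\ast$ --- that set consists of single letters only, since in every element of $\varOmega_\tau$ the letter $a$ occupies every other position --- but in the language of the separator sequence $(s_k)$, whose letters sit $2^{n+1}$ apart in $\omega$. Knowing that $s_k\,p^{(n)}\,s_{k+1}\cdots s_{k+m}$ occurs somewhere in $\eta$ does not tell you that $s_k\cdots s_{k+m}$ is a subword of the derived sequence $r^{(n)}$ of $\eta$ unless you also know that at that occurrence the $p^{(n)}$-blocks are aligned with the $n$-decomposition of $\eta$; this is precisely the content of Proposition \ref{Prop-Alignment} (together with Proposition \ref{Isolation} for the structure of $r^{(n)}$), a nontrivial rigidity statement from \cite{GLN} which your sketch neither proves nor reduces to anything simpler, so as written the induction does not close. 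The gap can in fact be bypassed: for existence, write $\omega$ as a limit of shifts of $\omega^{(x)}$, whose $n$-partitions are explicit by iterating (RF) inside the words $p^{(m)}xp^{(m)}$, pick a subsequence along which the finitely many possible classes stabilize, and pass the (closed) partition conditions to the limit; for uniqueness, your reduction shows two $n$-partitions could only differ by $2^{n}$, and in that case the block conditions force every position of both classes to carry the letter $s_{n-1}$, so that $\omega=\cdots p^{(n-1)}s_{n-1}p^{(n-1)}s_{n-1}\cdots$ would be periodic --- impossible, since the minimal subshift $\varOmega_\tau$ contains the three distinct elements $\omega^{(x)},\omega^{(y)},\omega^{(z)}$ agreeing on $\NN$ and hence has no periodic points. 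This softer route uses only (RF), minimality and aperiodicity, and avoids the derived-sequence combinatorics on which your plan hinges.
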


Based on n-partitions (and  n-decompositions) and the previous
theorem one can then study the dynamical system $(\varOmega_\tau,T)$
as well as various questions on the structure of $\mbox{Sub}_\tau$.
This is the content of the next sections.

\subsection{The maximal equicontinuous factor of the   dynamical system $(\varOmega_\tau,T)$}
In this section we use n-partitions to study the structure of the
dynamical system $(\varOmega_\tau,T)$.

\bigskip

For any $n\in \NN$ we  can consider  the cyclic group
$\TT^{(n)}:=\ZZ / 2^n \ZZ $ together with the map $A^{(n)}$, called
\textit{addition map}, which sends   $m + 2^n \ZZ $ to $m + 1+ 2^n
\ZZ$. Then, $(\TT^{(n)}, A^{(n)} )$ is a periodic minimal dynamical
system. Moreover, there are natural maps
$$\pi_n : \TT^{(n+1)} \longrightarrow \TT^{(n)}, \;  m + 2^{n+1}
\ZZ\mapsto m + 2^n \ZZ,$$ for any $n\in \NN\cup \{0\}$. These maps
allow one to construct the  topological abelian group $\ZZ_2$ as the
inverse limit of the system $(\TT^{(n+1)},\pi_n)$, $n\in \NN$.
Specifically, the elements of $\ZZ_2$ are sequences $(m_n)$ with
$m_n\in \TT^{(n)}$ and $\pi_n (m_{n+1}) = m_n$ for all $n\in \NN$.
This group is called the \textit{group of dyadic integers}. The
addition maps $A^{(n)}$ are  compatible with the inverse limit and
lift to a  map $A_2$ on $\ZZ_2$ (which is just addition by $1$ on
each member  of the sequence in question). In this way, we obtain a
dynamical system $(\ZZ_2, A_2)$. It is known as the \textit{binary
odometer}. As $A_2$ is just addition one can think of this system as
an 'adding machine'. It is well known that this dynamical system is
minimal.

Now  Theorem \ref{theorem-n-partition} can be rephrased as saying
that  the dynamical system $(\TT^{(n+1)},A^{(n+1)})$ is a factor of
$(\varOmega_\tau,T)$ via the factor map $P^{(n)}$.  Clearly,  the
factor maps $P^{(n)}$ are compatible with the natural canonical
projections $\pi_n$ in the sense that
$$\pi_n \circ P^{(n)} = P^{(n-1)}$$
holds for all $n\geq 1$. Thus, we can 'combine' the  $P^{(n)}$ for
all $n\in\NN$   to get a factor map
$$P_2 : (\varOmega_\tau, T)\longrightarrow (\ZZ_2, A_2),
\omega\mapsto (n\in \NN \mapsto P^{(n-1)}(\omega)).$$

We first use this to   study continuous eigenvalues. Let $(Y,R)$ be
a dynamical system (i.e. $Y$ is a compact space and $R$ is
 a homeomorphism). Denote the unit circle in $\CC$ by $\mathbb{S}^1$.  Then,
$k\in \CC$ is called a \textit{continuous eigenvalue} of the
dynamical system $(Y,R)$ if there exists a continuous not everywhere
vanishing function $f$ with values in $\CC$ on $Y$ satisfying
$$f (R(y)) = k f(y)$$
 for all $y\in Y$. Such a function is  called a
 \textit{continuous eigenfunction}. Then, any
 continuous eigenvalue belongs to $\mathbb{S}^1$ and
the continuous eigenvalues form a group under multiplication
whenever the underlying dynamical system is minimal. Indeed, by
minimality any continuous eigenfunction has  constant
(non-vanishing) modulus. Then, the product of two eigenfunctions is
an eigenfunction to the product of the corresponding eigenvalues.
The complex conjugate of an eigenfunction is an eigenfunction to the
inverse of the corresponding eigenvalue and the constant function is
an eigenfunction to the eigenvalue $1$. Moreover, it is not hard to
see that minimality implies that the multiplicity of each continuous
eigenvalue is one (i.e. for any two continuous eigenfunctions $f,g$
to the same eigenvalue there exists a complex number c with $f = c
g$).

\smallskip

Let now  $\mathcal{E}_{n}$ be the group of continuous eigenvalues of
$\TT^{(n)}$. This is just the subgroup of $\mathbb{S}^1$ given by
$\{e^{2 \pi i  \frac{k}{2^n}} : 0\leq k \leq 2^n-1\}$. Then, clearly
the groups $\mathcal{E}_{n}$ and  $\TT^{(n)}$ are dual to each other
via
$$\mathcal{E}_n\times \TT^{(n)}\longrightarrow \mathbb{S}^1,
(z,m + 2^n \ZZ)\mapsto z^m.$$ The dual maps to the canonical
projections $\pi_n : \TT^{(n+1)}\longrightarrow \TT^{(n)}$ are then
the canonical embeddings
$$\iota_n: \mathcal{E}_n\longrightarrow \mathcal{E}_{n+1}, z \mapsto z.$$
Thus,  $\mathcal{E}_{n}$ is a subgroup of $\mathcal{E}_{n+1}$. Let
$\mathcal{E}$ be the group arising as the union of the
$\mathcal{E}_n$, i.e.
$$\mathcal{E} :=\bigcup_n \mathcal{E}_n.$$
This group is often denoted as $\ZZ(2^\infty)$. Equip it with the
discrete topology and denote its Pontryagin dual
$\widehat{\mathcal{E}}$  by $\TT$.

\begin{Proposition}\label{prop-canonical-isom} The group $\TT$ is canonically isomorphic to the
group $\ZZ_2$ via
$$\ZZ_2\longrightarrow \TT, \; (m_n + 2^{n} \ZZ )_n \mapsto (  z \mapsto z^{m_n},\;  \mbox{whenever $z\in \mathcal{E}$ belongs to $\mathcal{E}_n$}.  )$$
\end{Proposition}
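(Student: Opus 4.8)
The plan is to verify that the stated formula defines a well-defined group homomorphism which is both injective and surjective, i.e.\ an isomorphism of topological groups; since both $\ZZ_2$ and $\TT=\widehat{\mathcal{E}}$ are compact and the map will be continuous, it then suffices to check the algebraic bijectivity together with continuity, and the inverse is automatically continuous.

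\medskip

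First I would check that the assignment is well-defined: given $(m_n+2^n\ZZ)_n\in\ZZ_2$, one must produce a character of $\mathcal{E}$, i.e.\ a group homomorphism $\mathcal{E}\to\mathbb{S}^1$. For $z\in\mathcal{E}$, pick $n$ with $z\in\mathcal{E}_n$ and set $\chi(z):=z^{m_n}$. The point requiring proof is independence of the choice of $n$: if $z\in\mathcal{E}_n\subset\mathcal{E}_{n+1}$, then one needs $z^{m_n}=z^{m_{n+1}}$. This follows because $z$ has order dividing $2^n$ in $\mathbb{S}^1$ (as $z\in\mathcal{E}_n$) while the compatibility condition $\pi_n(m_{n+1})=m_n$ precisely says $m_{n+1}\equiv m_n\pmod{2^n}$, so $z^{m_{n+1}-m_n}=1$. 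Iterating gives consistency for all larger indices. That $\chi$ is a homomorphism on $\mathcal{E}$ is then immediate from the multiplicativity of $w\mapsto w^{m_n}$ on $\mathcal{E}_n$ and the fact that any two elements of $\mathcal{E}$ lie in a common $\mathcal{E}_n$. So the map lands in $\TT=\widehat{\mathcal{E}}$.

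\medskip

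Next I would check it is a group homomorphism $\ZZ_2\to\TT$: addition $(m_n)_n+(m'_n)_n=(m_n+m'_n)_n$ in $\ZZ_2$ goes to the pointwise product of characters, because $z^{m_n+m'_n}=z^{m_n}z^{m'_n}$. For injectivity: if $(m_n)_n$ maps to the trivial character, then $z^{m_n}=1$ for every $z\in\mathcal{E}_n$ and every $n$; taking $z=e^{2\pi i/2^n}$, a generator of the cyclic group $\mathcal{E}_n$ of order $2^n$, forces $m_n\equiv0\pmod{2^n}$, i.e.\ $m_n+2^n\ZZ=0$ for all $n$, so the element is $0$ in $\ZZ_2$. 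For surjectivity: given a character $\chi\in\widehat{\mathcal{E}}$, its restriction $\chi|_{\mathcal{E}_n}$ is a character of the finite cyclic group $\mathcal{E}_n$, hence of the form $z\mapsto z^{m_n}$ for a unique $m_n\in\ZZ/2^n\ZZ$ (using that $\mathcal{E}_n$ and $\TT^{(n)}=\ZZ/2^n\ZZ$ are dual via $(z,m)\mapsto z^m$, as already noted in the excerpt); the compatibility of the restrictions under the embeddings $\iota_n$ translates into $\pi_n(m_{n+1})=m_n$, so $(m_n)_n\in\ZZ_2$ and it maps to $\chi$. Finally, continuity: with $\mathcal{E}$ discrete, the topology on $\widehat{\mathcal{E}}$ is that of pointwise convergence on $\mathcal{E}$, and for fixed $z\in\mathcal{E}_n$ the evaluation $(m_k)_k\mapsto z^{m_n}$ factors through the continuous projection $\ZZ_2\to\TT^{(n)}$, so the whole map is continuous; being a continuous bijection between compact Hausdorff groups it is a topological isomorphism.

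\medskip

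I do not expect a serious obstacle here; the one place to be careful is the well-definedness step, namely threading the compatibility condition $\pi_n(m_{n+1})=m_n$ through the order considerations so that $z\mapsto z^{m_n}$ genuinely does not depend on which $\mathcal{E}_n\ni z$ is chosen, and conversely that compatibility of characters on the chain $\mathcal{E}_1\subset\mathcal{E}_2\subset\cdots$ is exactly encoded by the inverse-limit relations defining $\ZZ_2$. Everything else is a routine application of Pontryagin duality for the finite cyclic groups together with the standard fact that duality turns direct limits into inverse limits.
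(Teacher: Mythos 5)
Your argument is correct. All the steps check out: well-definedness follows exactly as you say from $m_{n+1}\equiv m_n \pmod{2^n}$ together with $z^{2^n}=1$ for $z\in\mathcal{E}_n$; injectivity and surjectivity come down to the duality of the finite cyclic groups $\mathcal{E}_n$ and $\TT^{(n)}$ and the observation that compatibility of the restrictions $\chi|_{\mathcal{E}_n}$ is equivalent to the inverse-limit relations $\pi_n(m_{n+1})=m_n$; and the continuity plus compactness argument correctly upgrades the algebraic isomorphism to a topological one.

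Your route is, however, noticeably different in organization from the paper's. The paper first invokes the general machinery: Pontryagin duality turns the inverse limit $\ZZ_2=\varprojlim(\TT^{(n+1)},\pi_n)$ into the direct limit $\varinjlim(\mathcal{E}_n,\iota_n)=\mathcal{E}$, and then double duality identifies $\TT=\widehat{\mathcal{E}}$ with $\ZZ_2$ abstractly; only afterwards does it extract the explicit formula by constructing the \emph{inverse} map, evaluating a character $\gamma$ on the primitive roots $\varepsilon_n=e^{2\pi i/2^n}$ and reading off a compatible sequence $(m_n)$ with $\gamma(\varepsilon_n)=\varepsilon_n^{m_n}$. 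You instead take the stated forward map at face value and verify by hand that it is a well-defined, continuous, bijective homomorphism, using nothing beyond duality of finite cyclic groups and the compactness of both groups. Your surjectivity step is essentially the paper's construction of the inverse, so the computational core is shared; what your approach buys is self-containedness (no appeal to the limit-duality and double-duality theorems) and an explicit treatment of the topological side, which the paper leaves implicit in the cited general theory, while the paper's approach is shorter modulo that theory and makes the word ``canonically'' in the statement transparent.
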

\begin{proof} By construction $\ZZ_2$ comes about as inverse limit of the
$(\TT^{(n+1)},\pi_n)$, $n\geq 1$. Then, the dual group of $\ZZ_2$
arises as the  direct limit of the dual system $(\mathcal{E}_n,
\iota_n)$. This limit is just the group $\mathcal{E}$. Dualising
once more we obtain that $\TT$ is indeed canonically isomorphic to
the dual of $\ZZ_2$. To obtain the actual formula we can proceed as
follows:

Define  $\varepsilon_n:=e^{2 \pi i \frac{1}{2^n}}$. Then, each
$\varepsilon_n$ is a complex  primitive $2^n$-th root of $1$ with
$\varepsilon_{n+1}^2 = \epsilon_n$. Now, consider an arbitrary
element $\gamma \in\TT$ i.e. a character $\gamma : \mathcal{E}
\longrightarrow \mathbb{S}^1$. Then, $\gamma$ is  completely
determined by its values on the $\varepsilon_n$, $n=1,2\ldots$.
Moreover, for each $n$ we have
$$ ( \gamma (\varepsilon_{n+1}) ) ^2 =
\varepsilon_n^{m_n}$$ for a unique  $m_n \in \ZZ / 2^n \ZZ$ as
$$ \left( (\gamma (\varepsilon_{n+1})^2) \right)^{2^n} = (\gamma
(\varepsilon_{n+1}^2) )^{2^n} = (\gamma (\varepsilon_n))^{2^n} =
\gamma (1) =1.$$ It is not hard to see that $m_{n+1}$ goes to $m_n$
under the natural surjection $ \pi_n$. Thus, to each  character
$\gamma : \mathcal{E} \longrightarrow \mathbb{S}^1$ there
corresponds a sequence $(m_1,m_2,\ldots )$ with $m_n \in \ZZ / 2^n
\ZZ$ and  $\pi_n (m_{n+1}) = m_n$  for all $n\in\NN$ (and vice
versa). The set of such sequences is exactly $\ZZ_2$.
\end{proof}

As any eigenvalue belongs to $\mathbb{S}^1$, there is a canonical
embedding of groups $\mathcal{E}\longrightarrow \mathbb{S}^1$. In
fact, as things are set up here this embedding is just inclusion of
subsets of $\CC$.

By duality, this gives rise to a group homomorphism $j :
\ZZ\longrightarrow \TT$  with dense range. This homomorphism induces
then an action $A$ of $\ZZ$ on $\TT$ via
$$ A : \TT\longrightarrow \TT, A \gamma := j(1) \gamma.$$
It is not hard to see that this $A$ corresponds to $A_2$ if $\TT$ is
identified with $\ZZ_2$ according to Proposition
\ref{prop-canonical-isom}.

Disentangling definitions, we also  infer that the action is given
by
$$(A \gamma) (k) = k \; \gamma (k)$$
 (for $\gamma \in \TT$ and $k\in \mathcal{E}$). We denote the
arising dynamical system as $(\TT,A)$. It is isomorphic to the
{binary odometer}.  By its very  construction it is what is called a
\textit{rotation on a compact abelian group} (viz the action $A$
comes about  by multiplication with $j(1)$, where  $j:
\ZZ\longrightarrow \TT$ is a group homomorphism). Thus, by standard
theory (see e.g. \cite{ABKL,BS}) its group of continuous eigenvalues
is exactly given by the dual of $\TT$ i.e. by $\mathcal{E}$.

Now, obviously any eigenfunction of $\TT^{(n)}$ gives immediately
rise to an eigenfunction of $\varOmega_\tau$ for  the same
eigenvalue (by composing with the factor map). As the factor map is
continuous, we obtain in this way continuous eigenfunctions to each
of the  elements from $\mathcal{E}$.  Minimality easily shows that
(up to an overall  scaling) each of these eigenfunctions is unique.
Thus, we obtain a family of continuous eigenfunctions. At this point
it is not clear that all continuous  eigenvalues of
$(\varOmega_\tau,T)$ belongs to $\mathcal{E}$. However, this (and
more)  will be shown later.

\smallskip

We can use the preceding considerations to introduce a closed
equivalence relation $\approx$ on $\varOmega_\tau$ via
$$ \omega\approx \omega{'} :\Longleftrightarrow f(\omega) = f
(\omega{'})\;\:\mbox{for all eigenfunctions corresponding to
eigenvalues from $\mathcal{E}$}.$$ Then, $\varOmega_\tau / \approx$
is a compact topological space when equipped with the quotient
topology.

Clearly, $\omega \approx \omega{'}$ if and only if $T\omega \approx
T\omega{'}$. Thus, the relation $\approx$ is compatible with the
shift operation. Hence, the quotient $\varOmega_\tau / \approx$
becomes a dynamical system with the operation $T^\approx$ induced by
the shift.

Fix now an $\omega_0\in  \varOmega_\tau$. As discussed above
continuous eigenfunctions do not vanish anywhere and the
multiplicity of each continuous eigenvalue is one. Thus, for each
$k\in \mathcal{E}$ there exists a  unique eigenfunction $f_k$ to $k$
on $\varOmega_\tau$ with $f_k (\omega_0) =1$. Then, the arising
system of eigenfunctions will have the property that
$$f_{k_1} f_{k_2} = f_{k_1 + k_2},\: \; f_{-k} =\overline{f_k}$$
for all $k,k_1,k_2\in \mathcal{E}$. Thus, any
$\omega\in\varOmega_\tau$ will give rise to an element of $\TT =
\widehat{\mathcal{E}}$ via
$$ \mathcal{E}\longrightarrow \mathbb{S}^1, k\mapsto f_k (\omega).$$
Even more is true and  the following result holds. It is well-known
and can be found in various places in the literature. Recent
discussions are given in \cite{ABKL,BK,BLM}.

\begin{Lemma}\label{conjugate} The dynamical systems $(\varOmega_\tau/\approx, T^\approx)$
and $(\TT,A)$ are  conjugate via the map
$$[\omega] \mapsto (k\mapsto f_k (\omega)).$$
In particular, the eigenvalues of
$(\varOmega_\tau/\approx,T^\approx)$ are exactly given by the
elements of  $\mathcal{E}$.
\end{Lemma}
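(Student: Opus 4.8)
The plan is to verify directly that the map $\Phi\colon \varOmega_\tau/\approx \longrightarrow \TT$, $[\omega]\mapsto(k\mapsto f_k(\omega))$, is a well-defined homeomorphism intertwining $T^\approx$ and $A$, and then to read off the statement about eigenvalues from the general theory of rotations on compact abelian groups already quoted in the excerpt. First I would check that $\Phi$ is well-defined: by the very definition of $\approx$, two points $\omega\approx\omega'$ have $f_k(\omega)=f_k(\omega')$ for all $k\in\mathcal{E}$, so $\Phi[\omega]$ depends only on the class. Next, that $\Phi[\omega]$ really is a character of $\mathcal{E}$: this is exactly the multiplicativity relations $f_{k_1}f_{k_2}=f_{k_1+k_2}$ and $f_{-k}=\overline{f_k}$ recorded just before the statement, which force $k\mapsto f_k(\omega)$ to be a homomorphism $\mathcal{E}\to\mathbb{S}^1$, i.e.\ an element of $\widehat{\mathcal{E}}=\TT$.

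Then I would establish the intertwining property. For $k\in\mathcal{E}\subset\mathbb{S}^1$ the eigenfunction equation $f_k(T\omega)=k\,f_k(\omega)$ gives
$$\Phi[T\omega](k)=f_k(T\omega)=k\,f_k(\omega)=(A\,\Phi[\omega])(k),$$
using the explicit formula $(A\gamma)(k)=k\,\gamma(k)$ for the odometer action derived in the excerpt; hence $\Phi\circ T^\approx = A\circ\Phi$. Continuity of $\Phi$ follows because each $f_k$ is continuous on $\varOmega_\tau$ and $\TT=\widehat{\mathcal{E}}$ carries the topology of pointwise convergence on the (discrete, hence countable) group $\mathcal{E}$; injectivity is immediate from the definition of $\approx$, since $\Phi[\omega]=\Phi[\omega']$ says precisely $f_k(\omega)=f_k(\omega')$ for all $k$, i.e.\ $\omega\approx\omega'$.

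The only substantive point — and the step I expect to be the main obstacle — is surjectivity of $\Phi$, equivalently that $\TT$ carries no \emph{strictly larger} group of continuous eigenvalues than $\mathcal{E}$ once we pass to the quotient. I would argue it as follows: $\Phi$ is a continuous equivariant map from a compact system onto its image, which is therefore a closed $A$-invariant subset of $\TT$; but $(\TT,A)$ is minimal (it is the binary odometer, and minimality of odometers is classical and already invoked in the excerpt), so the image is all of $\TT$. This also shows $\Phi$ is a homeomorphism (continuous bijection of compact Hausdorff spaces), so $(\varOmega_\tau/\approx,T^\approx)$ and $(\TT,A)$ are conjugate. Finally, the group of continuous eigenvalues is a conjugacy invariant, and we computed in the excerpt — via Pontryagin duality and the standard theory of group rotations (\cite{ABKL,BS}) — that the continuous eigenvalues of $(\TT,A)$ are exactly $\mathcal{E}$; transporting this through $\Phi$ yields the "in particular" clause. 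One small bookkeeping item to double-check along the way is that the normalization $f_k(\omega_0)=1$ used to fix the $f_k$ is what makes the multiplicativity relations hold on the nose (rather than up to scalars), so that $\Phi[\omega_0]$ is the trivial character and $\Phi$ is genuinely a homomorphism of the groups in question once $\varOmega_\tau/\approx$ is given its natural group structure identifying $[\omega_0]$ with the identity.
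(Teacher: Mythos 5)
Your proof is correct, but note that the paper itself does not prove this lemma: it records it as a known fact and points to the literature on equicontinuous factors and almost automorphic systems (\cite{ABKL,BK,BLM}), the general principle being that quotienting a minimal system by the relation of agreement of all continuous eigenfunctions with eigenvalues in a group $\mathcal{E}$ yields the rotation on the dual $\widehat{\mathcal{E}}$. Your proposal replaces that citation by a direct, self-contained verification, and every step goes through: well-definedness and injectivity are immediate from the definition of $\approx$ together with the multiplicity-one remark (so agreement of all eigenfunctions to eigenvalues in $\mathcal{E}$ is the same as agreement of the normalized $f_k$); that $k\mapsto f_k(\omega)$ takes values in $\mathbb{S}^1$ uses the constant-modulus observation plus the normalization $f_k(\omega_0)=1$, which you correctly identify as what makes the relations $f_{k_1}f_{k_2}=f_{k_1+k_2}$, $f_{-k}=\overline{f_k}$ hold exactly; the intertwining is the eigenvalue equation read against the formula $(A\gamma)(k)=k\,\gamma(k)$ derived in the paper; continuity descends through the quotient topology because $\omega\mapsto(k\mapsto f_k(\omega))$ is continuous into the topology of pointwise convergence on the discrete group $\mathcal{E}$; surjectivity via compactness of the image and minimality of the odometer is a standard and sound device (alternatively, each $P^{(n)}$ is onto $\ZZ/2^{n+1}\ZZ$, being a nonempty subset invariant under adding $1$); and a continuous bijection from the compact quotient onto the Hausdorff group $\TT$ is automatically a homeomorphism. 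The eigenvalue clause then follows, as you say, from conjugacy invariance together with the paper's earlier identification of the continuous eigenvalue group of $(\TT,A)$ with $\mathcal{E}$. Two cosmetic points only: ``discrete, hence countable'' is not a valid implication (countability of $\mathcal{E}$ comes from its being the union of the finite groups $\mathcal{E}_n$, and is not actually needed), and the closing remark about a group structure on $\varOmega_\tau/\approx$ is superfluous, since conjugacy of dynamical systems requires no such structure --- only the normalization is needed, and only to make $\Phi[\omega]$ a character.
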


We now further investigate $\approx$ and provide a characterization
of $\omega \approx \omega{'}$. Here, we will again use  the special
words $\omega^{(x)},\omega^{(y)},\omega^{(z)}$ introduced above.

\begin{Proposition}[Characterization $\approx$]\label{prop-characterization-approx} For $\omega,\omega{'} \in\varOmega_\tau$ the
relation $\omega \approx \omega{'}$ holds if and only if one of the
following two properties hold:

\begin{itemize}
\item $\omega = \omega{'}$.

\item There exist $s,s{'}\in \{x,y,z\}$ with $s \neq s{'}$ and
$N\in \ZZ$ with $\omega = T^N \omega^{(s)}$ and $\omega{'}  = T^N
\omega^{(s{'})}$.
\end{itemize}
\end{Proposition}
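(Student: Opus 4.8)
The plan is to reduce, via Lemma~\ref{conjugate}, to a description of the fibres of the factor map $P_2 : \varOmega_\tau \to \ZZ_2$, which I would then read off from the $n$-decompositions of Theorem~\ref{theorem-n-partition}. For the reduction: every continuous eigenfunction to an eigenvalue in $\mathcal{E}$ is the composition of a character of some $\TT^{(n)}$ with the corresponding factor map onto $\TT^{(n)}$, and for fixed $n$ the characters of $\TT^{(n)}$ separate its points; hence $f(\omega)=f(\omega')$ for all eigenfunctions with eigenvalue in $\mathcal{E}$ is equivalent to $P^{(n)}(\omega)=P^{(n)}(\omega')$ for all $n\in\NN\cup\{0\}$, i.e.\ to $P_2(\omega)=P_2(\omega')$. (This is, of course, just the content of Lemma~\ref{conjugate} together with Proposition~\ref{prop-canonical-isom}.) So everything comes down to showing that $P_2^{-1}(\zeta)$ is a single point when $\zeta\in\ZZ_2$ is \emph{not} in the image $j(\ZZ)$ of the canonical map $j:\ZZ\to\ZZ_2$, while $P_2^{-1}(j(N))$ consists of the three distinct elements $T^N\omega^{(x)},T^N\omega^{(y)},T^N\omega^{(z)}$ for each $N\in\ZZ$.

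For the fibre analysis I would fix $\omega\in\varOmega_\tau$ and a position $m\in\ZZ$, and examine, for each $n$, the $n$-decomposition of $\omega$ provided by Theorem~\ref{theorem-n-partition}. Either $m$ is one of its spacer positions --- which, by the definition of the $n$-partition, happens exactly when $m$ lies in the coset $P^{(n)}(\omega)\in\ZZ/2^{n+1}\ZZ$ --- or $m$ lies in the interior of one of the blocks $p^{(n)}$, in which case $\omega_m$ is the corresponding letter of the \emph{fixed} word $p^{(n)}$ and is thus completely determined by $P^{(n)}(\omega)$. Because of the recursion $p^{(n+1)}=p^{(n)}s_np^{(n)}$ (RF), the spacer positions of the $(n+1)$-decomposition form a subset of those of the $n$-decomposition, so the set of $n$ at which $m$ is a spacer position is an initial segment of $\NN\cup\{0\}$. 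Hence one of two things occurs: either $m$ eventually lies in the interior of a block, and then $\omega_m$ is a function of $(P^{(n)}(\omega))_n$, hence of $P_2(\omega)$; or $m$ is a spacer position at every level, which forces $m$ to lie in $P^{(n)}(\omega)$ for all $n$, i.e.\ $P_2(\omega)=j(m)$, and by injectivity of $j$ this determines $m$ uniquely.

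The two cases of the proposition follow at once. If $P_2(\omega)\notin j(\ZZ)$, every position is of the first type, so $\omega$ is determined by $P_2(\omega)$ and the fibre is a single point; combined with the reduction, $\omega\approx\omega'$ then yields $\omega=\omega'$, the first alternative. If $P_2(\omega)=j(N)$, then $\omega$ is determined by $P_2(\omega)$ at every position except the single exceptional one, where $\omega$ takes a value in $\{x,y,z\}$; conversely, for each $s\in\{x,y,z\}$ the element $T^N\omega^{(s)}$ lies in $\varOmega_\tau$, satisfies $P_2(T^N\omega^{(s)})=j(N)$ by equivariance of $P_2$ (note $P_2(\omega^{(s)})=0$), and realizes the value $s$ at the exceptional position; these three elements are distinct because the $\omega^{(s)}$ are. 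Thus $P_2^{-1}(j(N))=\{T^N\omega^{(x)},T^N\omega^{(y)},T^N\omega^{(z)}\}$, and two distinct elements of it have the form $T^N\omega^{(s)},T^N\omega^{(s')}$ with $s\neq s'$: the second alternative. The two converse implications are immediate, since $\omega=\omega'$ certainly gives $\omega\approx\omega'$, and $T^N\omega^{(s)}\approx T^N\omega^{(s')}$ because both have $P_2$-image $j(N)$.

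The crux --- the step I expect to need the most care --- is the middle paragraph: one must argue rigorously that an element of $\varOmega_\tau$ is reconstructed from its sequence of $n$-partitions at every position save at most one, identify that exceptional position with the (unique) position that is a spacer at every scale, and verify that the spacer value there is genuinely unconstrained among $\{x,y,z\}$. The reconstruction uses only the nesting of the block decompositions coming from (RF) together with the existence and uniqueness of $n$-partitions (Theorem~\ref{theorem-n-partition}); the freedom of the exceptional value is exactly what the construction of $\omega^{(x)},\omega^{(y)},\omega^{(z)}$ recalled above supplies.
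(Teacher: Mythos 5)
Your proposal is correct and follows essentially the same route as the paper: both reduce $\omega\approx\omega'$ (via the eigenfunctions coming from $\mathcal{E}$ and their uniqueness under minimality) to $P^{(n)}(\omega)=P^{(n)}(\omega')$ for all $n$, and then use the rigidity of the $n$-decompositions to force two distinct $\approx$-equivalent sequences onto the orbits of $\omega^{(x)},\omega^{(y)},\omega^{(z)}$. The only difference is bookkeeping: the paper argues on the pair via boundedness of the representatives of $P^{(n)}$ and a shift normalization to $P^{(n)}\equiv 0$, whereas you describe all fibres of $P_2$ (singletons off $j(\ZZ)$, exactly the three points $T^N\omega^{(s)}$ over $j(N)$), which has the mild added benefit of making the converse implication and the three-to-one count used in Theorem \ref{Main-max-equicontinuous-factor} explicit.
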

\begin{proof}
 Let $\omega, \omega{'}$ with
$\omega \neq \omega{'}$ and $\omega\approx \omega{'}$ be given. By
definition of $\approx$ and the  above construction of the
eigenfunctions of $(\varOmega_\tau,T)$, we then have that
$$P^{(n)} (\omega ) = P^{(n)} (\omega{'})$$
for all $n\in \NN\cup\{0\}$. Call this quantity $P^{(n)}$. In the
remaining part of the proof we will identify such a $P^{(n)}$ with
its unique representative in $\{0,\ldots, 2^{n+1} -1\}$.

As $\omega \neq \omega{'}$ we infer that one of the sequences
$(P^{(n)})_{n\in \NN\cup\{0\} }$ or $(2^{n+1} - P^{(n)})_{n\in
\NN\cup\{0\} }$
 must be bounded. (Otherwise,
$\omega$ and $\omega{'}$ would agree on larger and larger pieces
around the origin and then had to be equal.) Assume without loss of
generality that $(P^{(n)})$ is bounded. By restricting attention to
a subsequence we can then assume without loss of generality that
$P^{(n)} = P$ for all $n$. After shifting the sequences by  $P$ to the left we
can then assume without loss of generality that $P^{(n)} =0$ for all
$n$. By definition of $P$, there exist then letters $s,s{'}\in
\{x,y,z\}$ with
$$\omega = ... s| p^{(n)}...\;\:\mbox{and}\;\: \omega{'} = ...s{'}
|p^{(n)}...$$ for all $n\geq 0$ , where $|$ denotes the position of the
origin. This gives, by definition of the  $n$-partition that in fact
$$\omega = ...p^{(n)} s| p^{(n)}\;\:\mbox{and}\;\: \omega{'} = ...p^{(n)} s{'}
|p^{(n)}...$$ for all $n\geq 0$. Thus, we obtain $\omega =
\omega^{(s)}$ and $\omega{'} = \omega^{(s{'})}$. As $\omega \neq
\omega{'}$ we infer that $s\neq s{'}$. This finishes the proof.
\end{proof}

The previous result shows that the factor map from $\varOmega_\tau$
to  $\varOmega_\tau/\approx$ is one-to-one except on three  orbits.
This has strong consequences as will be discussed next.

As $(\varOmega_\tau,T)$ is uniquely ergodic, there exists a unique
$T$-invariant probability  measure $\lambda$ on $\varOmega_\tau$.
 The operation $T$  then  induces a unitary operator $U_T $ on the
associated $L^2$-space via
$$U_T : L^2 (\varOmega,\lambda)\longrightarrow L^2
(\varOmega,\lambda),\;  U_T f = f\circ T.$$ An element $f\in L^2
(\varOmega,\lambda)$ (with $f\not\equiv 0$) is called a
\textit{measurable eigenfunction to $k\in \mathbb{S}^1$} if $U_T f =
k f$. The subshift is said to have \textit{pure point spectrum} if
there exists an orthonormal basis of measurable  eigenfunctions.
From the two  previous results we immediately infer the following.

\begin{Theorem} The dynamical system $(\varOmega_\tau,T)$ has pure
point spectrum and any measurable eigenvalue is a continuous
eigenvalue  and belongs to $\mathcal{E}$.
\end{Theorem}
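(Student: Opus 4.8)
The plan is to deduce the theorem directly from the two preceding results, Lemma~\ref{conjugate} and Proposition~\ref{prop-characterization-approx}, by showing that the factor map $\varOmega_\tau \to \varOmega_\tau/\!\approx$ is a measure-theoretic isomorphism for the unique invariant measure $\lambda$. The point is that a minimal uniquely ergodic system has pure point spectrum (with all measurable eigenvalues continuous) precisely when its maximal equicontinuous factor map is an isomorphism of measure-preserving systems, i.e. the factor map is injective off a set of $\lambda$-measure zero. So the whole proof reduces to a single measure-theoretic observation about the exceptional set.

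First I would record what Proposition~\ref{prop-characterization-approx} gives: the relation $\approx$ is trivial (i.e. $\omega \approx \omega'$ forces $\omega = \omega'$) except on the three $T$-orbits of $\omega^{(x)},\omega^{(y)},\omega^{(z)}$. Hence the set $E := \{\omega^{(s)} : s \in \{x,y,z\}\}$ and its orbit $\bigcup_{N\in\ZZ} T^N E$ is exactly the set where the factor map $q:\varOmega_\tau \to \varOmega_\tau/\!\approx$ fails to be injective. This orbit is a countable union of single points. Since $(\varOmega_\tau,T)$ is non-periodic (as noted in the excerpt, $\omega^{(x)},\omega^{(y)},\omega^{(z)}$ are distinct and agree on $\NN$, so the system is aperiodic), $\lambda$ is non-atomic — a standard fact for uniquely ergodic aperiodic systems, which also follows because the full system is minimal and infinite. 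Therefore $\lambda\bigl(\bigcup_{N} T^N E\bigr) = 0$.

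Next I would combine this with Lemma~\ref{conjugate}, which identifies $(\varOmega_\tau/\!\approx, T^\approx)$ with the binary odometer $(\TT,A)$, a compact group rotation and hence uniquely ergodic with its Haar measure as the unique invariant measure. The factor map $q$ pushes $\lambda$ forward to the unique invariant measure on $\TT$, and by the previous paragraph $q$ is a bijection off a $\lambda$-null set; consequently $q$ is a measure-theoretic isomorphism between $(\varOmega_\tau, T, \lambda)$ and $(\TT, A, \mathrm{Haar})$. A group rotation has pure point spectrum with eigenvalue group equal to its Pontryagin dual, which here is $\mathcal{E}$; transporting the orthonormal basis of characters through $q$ gives an orthonormal basis of measurable eigenfunctions of $(\varOmega_\tau,T)$, all with eigenvalues in $\mathcal{E}$. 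Finally, any measurable eigenfunction of a system with pure point spectrum lies in the closed span of this basis, so every measurable eigenvalue already appears among the $\mathcal{E}$-eigenvalues; and each such eigenvalue is realised by a continuous eigenfunction (the one pulled back from $\TT^{(n)}$, as constructed earlier in the section). This proves all three assertions: pure point spectrum, every measurable eigenvalue is continuous, and the eigenvalue group is $\mathcal{E}$.

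The only genuine obstacle is the passage from ``$q$ is injective off a countable set'' to ``$q$ is a measure isomorphism'', which requires knowing $\lambda$ is non-atomic; but this is immediate since $(\varOmega_\tau,T)$ is minimal on an infinite (Cantor) space, so no point can carry positive mass under an invariant measure. Everything else is the standard dictionary between maximal equicontinuous factors and pure point spectrum, for which I would cite the references already mentioned in the excerpt (\cite{ABKL,BK,BLM}).
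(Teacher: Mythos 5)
Your argument is correct and follows essentially the same route as the paper: identify $(\varOmega_\tau/\!\approx,T^\approx)$ with the odometer via Lemma~\ref{conjugate}, use Proposition~\ref{prop-characterization-approx} to see the factor map fails injectivity only on three countable orbits (hence a $\lambda$-null set), and conclude the $L^2$-spaces are isomorphic so the pure point spectrum with eigenvalue group $\mathcal{E}$ transfers back. Your write-up merely fills in the non-atomicity and measure-isomorphism details that the paper dispatches with ``this easily gives the desired result.''
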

\begin{proof}
The dynamical system $(\TT, A)$ has pure point spectrum with all
eigenvalues being continuous and belonging to $\mathcal{E}$ as it is
a shift on the  compact abelian group $\TT$ which is the Pontryagin
dual of $\mathcal{E}$, \cite{Wal}.  As the  dynamical system
$\varOmega_\tau / \approx$ is conjugate to $(\TT,A)$ due to Lemma
\ref{conjugate} it has also pure point spectrum with all eigenvalues
being continuous and belonging to $\mathcal{E}$.

Now, Proposition \ref{prop-characterization-approx} shows that
factor map from $\varOmega_\tau$ to $\varOmega_\tau/\approx$ is
one-to-one except on three countable orbits. This implies that in
terms of measures the associated $L^2$-spaces are isomorphic. This
easily gives the desired result.
\end{proof}

\begin{Remark} The occurrence of pure point dynamical spectrum is
a key feature in the investigation of aperiodic order. In fact,
while there is no axiomatic framework for aperiodic order a
distinctive feature is (pure) point diffraction. Now, pure point
diffraction has been shown to be equivalent to pure point dynamical
spectrum. For the case of subshifts at hand this can be inferred
(after some work) from \cite{Que}. A more general result (dealing
with uniquely ergodic  Delone systems) was then given in \cite{LMS}.
The result can even further be generalized to measure dynamical
systems and even processes \cite{BL,LM,LS}.
\end{Remark}

The previous theorem  implies that $(\TT,A)$ is exactly the
\textit{maximal equicontinuous factor} of $(\varOmega_\tau,T)$ (see
e.g. \cite{Aus} for definition). Indeed, one of the many equivalent
ways to describe this factor is as the dual group of the group of
continuous eigenvalues.   A recent discussion of this and various
related facts can be found in  \cite{ABKL}. Henceforth, we will
denote the maximal equicontinuous factor of $(\varOmega_\tau,T)$ by
$(\varOmega_\tau^{\max}, T^{\max})$ and the corresponding factor map
by $\pi^{\max}$. Then, our findings so far provide the following
theorem.

\begin{Theorem}[Factor map onto $\varOmega_\tau^{\max}$] \label{Main-max-equicontinuous-factor}
The three dynamical systems $(\varOmega_\tau/\approx$, $T^\approx)$,
$(\TT,A)$ and $(\varOmega_\tau^{\max}, T^{\max})$ are topologically
conjugate. The factor map
$$\pi^{\max} : \varOmega_\tau \longrightarrow \varOmega_\tau^{\max}$$
is one-to-one in all points except on the images of the points of
the orbits of $\omega^{(x)},\omega^{(y)},\omega^{(z)}$. In these
points it is three-to-one.
\end{Theorem}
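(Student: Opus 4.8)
The plan is to assemble the theorem directly from the three earlier results that have already been established: Lemma~\ref{conjugate} gives that $(\varOmega_\tau/\approx, T^\approx)$ is conjugate to $(\TT,A)$; the discussion following Lemma~\ref{conjugate} together with the Theorem on pure point spectrum identifies $(\TT,A)$ with the maximal equicontinuous factor $(\varOmega_\tau^{\max}, T^{\max})$; and Proposition~\ref{prop-characterization-approx} describes the fibers of the quotient map $\varOmega_\tau \to \varOmega_\tau/\approx$ explicitly. So the first step is simply to chain these conjugacies: the composition of the quotient map $\varOmega_\tau \to \varOmega_\tau/\approx$ with the conjugacy $\varOmega_\tau/\approx \;\xrightarrow{\sim}\; \TT \;\xrightarrow{\sim}\; \varOmega_\tau^{\max}$ is, by the universal property of the maximal equicontinuous factor, exactly the canonical map $\pi^{\max}$ (up to the identification of conjugate systems). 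This already yields the first sentence of the theorem.

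Next I would transfer the fiber information. Since the two conjugacies $\varOmega_\tau/\approx \to \TT \to \varOmega_\tau^{\max}$ are bijections, the fibers of $\pi^{\max}$ are in bijective correspondence with the fibers of the quotient map $q\colon \varOmega_\tau \to \varOmega_\tau/\approx$, i.e.\ with the $\approx$-equivalence classes. By Proposition~\ref{prop-characterization-approx}, every $\approx$-class is either a singleton $\{\omega\}$ or a triple $\{T^N\omega^{(x)}, T^N\omega^{(y)}, T^N\omega^{(z)}\}$ for some $N\in\ZZ$. The latter classes are precisely indexed by $N\in\ZZ$, so their union is exactly the union of the orbits of $\omega^{(x)},\omega^{(y)},\omega^{(z)}$ (one should note here that these three orbits coincide as sets, since $T^N\omega^{(x)}\approx T^N\omega^{(y)}\approx T^N\omega^{(z)}$ forces the three orbits to be literally the same subset of $\varOmega_\tau$ --- or at least that their union is this single set of non-trivial classes). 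Hence $\pi^{\max}$ is injective off this set, and on this set every fiber has exactly three elements, i.e.\ $\pi^{\max}$ is three-to-one there. A small point to check is that the three elements $\omega^{(x)},\omega^{(y)},\omega^{(z)}$ are genuinely distinct, which was already remarked in the text right after their definition (they differ in the single letter $s$ at the origin-adjacent position), and that shifts of distinct ones remain distinct, which is immediate.

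The one genuine subtlety --- and the step I expect to be the main obstacle --- is making precise that the composite conjugacy really is $\pi^{\max}$ and not merely \emph{some} factor map onto a conjugate copy of the maximal equicontinuous factor. This requires invoking the universal property: $(\varOmega_\tau^{\max}, T^{\max})$ is characterized as the (unique up to conjugacy) largest equicontinuous factor, equivalently as the Ellis/Kronecker factor, equivalently --- as noted in the excerpt --- as the dual of the group $\mathcal{E}$ of continuous eigenvalues. The map $P_2\colon \varOmega_\tau \to \TT$ built earlier from the $P^{(n)}$ is precisely the map realizing $\varOmega_\tau\to\widehat{\mathcal{E}}$ by $\omega\mapsto(k\mapsto f_k(\omega))$, and by construction it collapses exactly the relation $\approx$; uniqueness of the maximal equicontinuous factor then forces $\pi^{\max}$ to factor as $P_2$ followed by the identification $\TT\cong\varOmega_\tau^{\max}$, and conversely. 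Once this identification is pinned down, the fiber count transports verbatim and the proof is complete. I would keep this proof short, citing \cite{ABKL,Aus} for the universal property and pointing to Lemma~\ref{conjugate}, Proposition~\ref{prop-characterization-approx}, and the pure point spectrum theorem for the substantive content.
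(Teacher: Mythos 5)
Your proposal is correct and follows essentially the same route as the paper, which obtains the theorem by combining Lemma \ref{conjugate}, the identification (via the pure point spectrum theorem) of the group of continuous eigenvalues with $\mathcal{E}$ --- so that $(\TT,A)$, being the dual of $\mathcal{E}$, is exactly the maximal equicontinuous factor --- and the fiber description of $\approx$ in Proposition \ref{prop-characterization-approx}. One small correction: the orbits of $\omega^{(x)},\omega^{(y)},\omega^{(z)}$ do \emph{not} coincide as sets (they are pairwise disjoint, since $T^N\omega^{(s)}=T^M\omega^{(s')}$ with $s\neq s'$ would force $\eta$ to be eventually periodic, contradicting aperiodicity of the minimal subshift), but your hedged alternative --- that their union is precisely the set of points lying in non-trivial $\approx$-classes --- is exactly what the fiber count requires, so the argument stands.
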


\begin{Remark}
\begin{itemize}
\item Minimal systems with the property that their factor map to the
maximal equicontinuous factor is one-to-one in at least one point
are known as \textit{almost automorphic systems} (see e.g.
\cite{ABKL} for further details). Their study has attracted a lot of
attention. As the  previous result shows, $(\varOmega_\tau,T)$ is an
almost automorphic system. In fact, as $\varOmega_\tau$ is
uncountable, the factor map is one-to-one in almost every  point
with respect to the unique invariant probability measure $\lambda$
on $\varOmega_\tau$. This is sometimes expressed as  \textit{the
factor map from $(\varOmega_\tau,T)$ to its maximal equicontinuous
factor is almost-everywhere one-to-one}.

\item There is an alternative description of the relation $\approx$ for
almost automorphic systems. More specifically, define the
\textit{proximality relation} $\sim$  by
$$ \omega\sim \omega^{'} \Longleftrightarrow \inf_{n\in\ZZ} d (T^n
\omega,T^n \omega^{'})=0,$$ where $d$ is any metric on
$\varOmega_\tau$ inducing the topology. (Due to compactness of
$\varOmega_\tau$  the relation is indeed independent of the chosen
metric.) Note that the  proximality relation can be though of as
describing asymptotic agreement. Then, for almost automorphic
systems the proximality relation $\sim$ and the relation $\approx$
agree. This can be found in the book of Auslander \cite{Aus}. A
recent discussion is given in \cite{ABKL}.  In fact, this result is
even more general in that one does not need almost automorphy but
only a weaker condition called \textit{coincidence rank one}. We
refrain from further discussion of this concept and refer the reader
to e.g. \cite{BK} for further investigation. We just note that in
our situation  equality of $\sim$ and $\approx$ and the
characterization of $\approx$ in Proposition
\ref{prop-characterization-approx}  imply  that sequences which are
proximal (i.e. asymptotically equal) are in fact equal everywhere up
to one position.

\item In \cite{Vor1} Vorobets shows that the
   one-dimensional subshift associated to
$\tau$ has the binary odometer $(\TT,A)$  as a factor with the
factor map being $1:1$ in all points except three orbits. He uses
this to conclude pure point spectrum and unique ergodicity. Our
corresponding results above for the two - sided subshift can
therefore be seen as analogues  to his results.  However, our
approach is different: It is  based on $n$-partition whereas his
approach is based on Toeplitz sequences.
\end{itemize}
\end{Remark}

%\begin{Corollary} \end{Corollary}
%\begin{proof} This follows directly from the previous theorem as the
%maximal equicontinuous factor always has pure point spectrum (as it
%is just a shift on a compact abelian group).
%\end{proof}

\subsection{Powers and the index (critical exponent) of   $\mbox{Sub}_\tau$}
In this section we have a closer look at the structure of
$\mbox{Sub}_\tau$. The main focus will be on occurrences of three
blocks and the index of words (also known as critical exponent). We
will use $n$-partitions in our study in a spirit similar to
\cite{DL5,DL6}.

\bigskip

We start by investigating occurrences of almost four blocks. An easy
inspection of $\eta$ gives the following lemma.

\begin{Lemma}\label{one-three-plus-block} The word $a x a x a x a$ belongs to
$\mbox{Sub}_\tau$.
\end{Lemma}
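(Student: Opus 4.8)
The plan is to verify directly that the word $axaxaxa$ appears as a subword of some $\tau^n(a) = p^{(n)}$, since by definition $\mbox{Sub}_\tau = \mbox{Sub}(\eta) = \bigcup_n \mbox{Sub}(p^{(n)})$. The natural tool is the recursion formula (RF), namely $p^{(n+1)} = p^{(n)} s_n p^{(n)}$ with $s_n \in \{x,y,z\}$, together with the base cases $p^{(0)} = a$ and $p^{(1)} = \tau(a) = axa$. The key observation I would exploit is that each $p^{(n)}$ for $n \geq 1$ begins with $a$ and ends with $a$ (both facts follow from (RF) by an easy induction, and are in fact already noted in the palindrome discussion before Proposition \ref{Prop-palindrome}).

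First I would compute the first few iterates explicitly: $p^{(0)} = a$, $p^{(1)} = axa$, and then using (RF) with $s_1 = y$, $p^{(2)} = p^{(1)} y p^{(1)} = axa\,y\,axa = axayaxa$. This already shows $axaxaxa$ is not literally one of the very first $p^{(n)}$, so I would go one step further. Using (RF) with $s_2 = z$, we get $p^{(3)} = p^{(2)} z p^{(2)}$. This still contains a $z$ in the middle, not an $x$. So the relevant place to look is inside a block of the form $p^{(n)} x p^{(n)}$ that occurs within $\eta$: since $p^{(n)}$ ends with $axa$ (for $n\ge 1$, as $p^{(1)}=axa$ is a suffix of every later $p^{(n)}$) and begins with $axa$, the block $p^{(n)} x p^{(n)}$ contains the factor $axa\,x\,axa = axaxaxa$ straddling the central $x$. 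It therefore suffices to exhibit one $n$ for which $p^{(n)} x p^{(n)}$ occurs in $\eta$; by the recursion, $p^{(3k+1)} = p^{(3k)} x p^{(3k)}$ for every $k$, so for instance $p^{(4)} = p^{(3)} x p^{(3)}$ works, and $p^{(3)}$ ends and begins with $axa$.

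Concretely, the cleanest route: write down $p^{(3)}$ in full (it has length $2^4 - 1 = 15$: $p^{(3)} = p^{(2)} z p^{(2)} = axayaxa\,z\,axayaxa$), observe it starts with $axa$ and ends with $axa$, and then $p^{(4)} = p^{(3)} x p^{(3)}$ contains $(\ldots axa)\,x\,(axa \ldots) = \ldots a x a x a x a \ldots$. Since $p^{(4)}$ is a prefix of $\eta$, we conclude $axaxaxa \in \mbox{Sub}(\eta) = \mbox{Sub}_\tau$.

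I do not expect any real obstacle here — this is a finite verification. The only mild subtlety is making sure one uses the correct $s_n$ from the three-periodic pattern so that the central letter is genuinely an $x$ and not $y$ or $z$; picking $n = 4$ (so that the split $p^{(4)} = p^{(3)} s_3 p^{(3)}$ has $s_3 = \tau^3(x) = x$) handles this. One could phrase the whole thing even more briefly by just citing (RF) and the prefix/suffix structure of $p^{(n)}$, but displaying $p^{(3)}$ or $p^{(4)}$ explicitly makes the "easy inspection" in the lemma statement completely transparent.
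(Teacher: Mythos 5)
Your verification is correct: using (RF) you identify $p^{(4)}=p^{(3)}\,x\,p^{(3)}$ with $s_3=\tau^3(x)=x$, and since $p^{(3)}$ both begins and ends with $axa$, the prefix $p^{(4)}$ of $\eta$ contains $axaxaxa$. This is exactly the ``easy inspection of $\eta$'' the paper invokes, so your argument matches the paper's approach, just written out explicitly.
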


The previous  result deals with occurrence of a cube (and even a bit
more) of the special word $a x$.    As $\mbox{Sub}_\tau$ is
invariant under $\tau$ this  then yields the occurrence of many more
cubes. This can be used to exclude eigenvalues for Schr\"odinger
operators via the so-called Gordon argument. This is discussed in
\cite{GLN} for the case at hand. Such an application of the Gordon
arguments for  subshifts coming from substitution  goes back to
\cite{Dam0}, see \cite{Dam} for a survey as well.

Here, we turn next to showing that there are no fourth powers
occurring in $\mbox{Sub}_\tau$.

Let us recall from the considerations on $n$-partitions in Section
\ref{Section-n-partition} that there exists a sequence $r_1^{(n)}
r_2^{(n)} ...\in\{x,y,z\}^\NN$ such that the fixed point $\eta$ of
$\tau $ can be written as
$$ \eta = p^{(n)} r_1^{(n)} p^{(n)} r_2^{(n)}....$$
with $r_j^{(n)} =\tau^n (r_j)\in\{x,y,z\}$ for any $n\in\NN\cup
\{0\}$.  This way of writing $\eta$ is  referred to as the
$n$-decomposition of $\eta$. Call the sequence
$$r^{(n)} = r_1^{(n)} r_2^{(n)} \ldots
\in\{x,y,z\}^\NN$$ the \textit{$n$-th derived sequence of $\eta$}.
Note that for any natural number $n$ the combinatorial properties of
the sequence $r^{(n)}$  are exactly the same as the combinatorial
properties  of the sequence $r^{(1)}$ as $\tau^n$  is injective on
$\{x,y,z\}$ and  $r^{(n)} = \tau^{n} ( r^{(1)})$ holds.

\begin{Proposition} \cite{GLN}\label{Isolation} In the derived sequence $r=r^{(1)}$ the  letters $y$ and $z$
always occur  isolated preceded and followed by an $x$. The letter
$x$ always occurs either isolated (i.e. preceded and followed by
elements of $\{y,z\}$) or  in the form $x x x$. In particular, there
is no occurrence of $xxxx$.  The analogue statements hold for any
natural number $n$ for the sequence $r^{(n)}$  (with $x,y,z$
replaced by $\tau^n (x), \tau^n (y)$ and $\tau^n (z)$).
\end{Proposition}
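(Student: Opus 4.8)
The plan is to analyze the structure of the derived sequence $r = r^{(1)}$ directly from the recursion formula (RF), by understanding how the letters $r_j^{(1)}$ are produced. Recall that $\eta = p^{(0)} r_1^{(0)} p^{(0)} r_2^{(0)} \ldots$ is the $0$-decomposition, so $r_j^{(0)}$ are exactly the non-$a$ letters of $\eta$ read in order, and $r^{(1)} = \tau(r^{(0)})$. So it suffices to understand the sequence of non-$a$ letters of $\eta$ itself, i.e. $r^{(0)}$, and then apply $\tau$. First I would use (RF) in the form $p^{(n+1)} = p^{(n)} s_n p^{(n)}$ to set up a substitution-type description of $r^{(0)}$: since $\eta = p^{(1)} r_1^{(1)} p^{(1)} r_2^{(1)} \ldots$ refines the $0$-decomposition, and $p^{(1)} = axa = p^{(0)} x p^{(0)}$, each block $p^{(1)}$ contributes a single $x$ to $r^{(0)}$ flanked by the separators $r_j^{(1)} \in \{x,y,z\}$. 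Iterating, the cleanest route is to observe that $r^{(0)}$ is itself (a shift of) a fixed point of a substitution on $\{x,y,z\}$ induced by $\tau$, or — more elementary — to read off $r^{(0)}$ from the Toeplitz description in the Proposition on positions of $a,x,y,z$.

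Concretely, the second approach is probably fastest: by the Proposition on positions, the non-$a$ positions of $\eta$ are the even integers $2m$, and at position $2m$ the letter is $x$, $y$, or $z$ according to the $2$-adic valuation $v_2(2m) = 1 + v_2(m)$ lying in $\{1\}$, $\{2\}$, $\{3\}$ modulo $3$ — that is, writing $2m = 2^j \cdot (\text{odd})$, the letter is $x$ if $j \equiv 1$, $y$ if $j \equiv 2$, $z$ if $j \equiv 0 \pmod 3$. Hence the $m$-th letter of $r^{(0)}$ is determined by $v_2(m) \bmod 3$: it is $x$ when $v_2(m) \equiv 0$, $y$ when $v_2(m) \equiv 1$, $z$ when $v_2(m) \equiv 2 \pmod 3$. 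So $r^{(0)}$ is the fixed point of the $2$-block substitution sending a "ruler sequence" pattern through the map $0\mapsto x, 1\mapsto y, 2\mapsto z$ on valuations; in particular $y$ occurs exactly at positions $m$ with $v_2(m)=1$, i.e. $m \equiv 2 \pmod 4$, so $y$ is always flanked by letters at odd positions, which have valuation $0$ and hence are $x$; likewise $z$ occurs at $m$ with $v_2(m)=2$, i.e. $m\equiv 4\pmod 8$, again flanked by odd-position letters, which are $x$. This proves $y$ and $z$ always occur isolated, preceded and followed by $x$.

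It remains to control runs of $x$. An $x$ in $r^{(0)}$ sits at a position $m$ with $v_2(m) \equiv 0 \pmod 3$, i.e. $v_2(m) \in \{0, 3, 6, \ldots\}$. If $v_2(m) = 0$ ($m$ odd), the neighbours are at even positions $m\pm 1$ with valuations $\geq 1$, whose valuation mod $3$ is $1$ or $2$ (never $0$, since that would need valuation $\geq 3$ — impossible for both neighbours simultaneously unless... here I would just check: if $v_2(m-1)\geq 3$ and $v_2(m+1)\geq 3$ then $m-1$ and $m+1$ differ by $2$ but both divisible by $8$, contradiction), so an odd-position $x$ is isolated. If $v_2(m) = 3$, then $m = 8m'$ with $m'$ odd; its neighbours in $r^{(0)}$ are at $m \pm 1$ (odd, valuation $0$, letter $x$) — wait, that would give a run $x x x$ already, and the further neighbours $m\pm 2$ have valuation $1$, letter $y$. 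Checking more carefully: positions $8m'-2, 8m'-1, 8m', 8m'+1, 8m'+2$ have valuations $1,0,3,0,1$, hence letters $y, x, x, x, y$. So $x x x$ occurs; and the next run would need four consecutive valuations all $\equiv 0 \pmod 3$, impossible since among any four consecutive integers at most three are odd and the even one among them has valuation $\geq 1$ that must also be $\equiv 0 \pmod 3$, forcing valuation $\geq 3$, but then considering the configuration shows the pattern must be $\ldots y\,x\,x\,x\,y\ldots$, never $xxxx$. The main obstacle is this last bookkeeping — ruling out $xxxx$ and confirming $xxx$ does occur — which amounts to a short finite case analysis on $2$-adic valuations of four consecutive integers; I would organize it by noting that in any window of the $x$-positions, between two consecutive odd numbers there is exactly one even number, whose valuation mod $3$ being $0$ requires valuation $\geq 3$, and two such even numbers in a window of length $4$ cannot both be divisible by $8$. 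Finally, the statement for general $n$ is immediate: $r^{(n)} = \tau^n(r^{(1)})$ and $\tau^n$ is injective on $\{x,y,z\}$ (indeed a bijection permuting these three letters), so it carries the combinatorial structure of $r^{(1)}$ verbatim to $r^{(n)}$ with $x,y,z$ replaced by $\tau^n(x),\tau^n(y),\tau^n(z)$.
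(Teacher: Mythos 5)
The paper itself does not reproduce a proof of this proposition (it cites \cite{GLN}), so your argument has to stand on its own; your route --- reading the derived sequence off the Toeplitz description of $\eta$, so that the $m$-th letter of $r^{(0)}$ is $x$, $y$ or $z$ according to the $2$-adic valuation $v_2(m)$ being $\equiv 0,1,2 \pmod 3$ --- is legitimate (that positions proposition is proved in the paper from (RF)), and the core bookkeeping is essentially right: $y$ and $z$ sit at even positions of $r^{(0)}$, so their neighbours are odd positions, which carry $x$; an even position carrying $x$ must have $v_2\geq 3$, hence be divisible by $8$, so two such positions cannot lie within distance $2$, which yields the pattern $y\,x\,x\,x\,y$ around every such position and excludes $xxxx$. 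Some local statements need repair: $y$ occurs at positions with $v_2(m)\equiv 1 \pmod 3$, not only $v_2(m)=1$ (harmless for the isolation argument, since all such positions are even); the interim claim ``an odd-position $x$ is isolated'' is false as stated (position $7$ of $r^{(0)}$ is an odd-position $x$ inside the block $xxx$ at positions $7,8,9$) --- what is true, and what you in effect use afterwards, is that an odd-position $x$ has at most one $x$-neighbour; and the case $v_2(m)\in\{6,9,\dots\}$ must be treated alongside $v_2(m)=3$ (it behaves identically).

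The genuine gap is the final step. Your valuation analysis establishes the stated structure for $r^{(0)}$: there $y,z$ are isolated and flanked by $x$, and $x$ occurs isolated or as $xxx$. You announce ``and then apply $\tau$'' but never do it, and doing it does not produce the literal first two sentences of the Proposition: since $\tau$ acts on $\{x,y,z\}$ as the cycle $x\mapsto y\mapsto z\mapsto x$ and $r^{(1)}=\tau(r^{(0)})$ letterwise, the frequent letter of $r^{(1)}$ is $y$ (indeed $\eta=(axa)\,y\,(axa)\,z\,(axa)\,y\,(axa)\,x\cdots$, so $r^{(1)}$ begins $y\,z\,y\,x\,y\,z\,y\,y\,y\cdots$), and in $r^{(1)}$ it is $z$ and $x$ that are isolated and flanked by $y$, while $y$ occurs isolated or as $yyy$. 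This is precisely the ``analogue statement with $x,y,z$ replaced by $\tau^n(x),\tau^n(y),\tau^n(z)$'' at $n=1$, so what you have actually proved is the Proposition with the un-permuted wording attached to $r^{(0)}$; as printed, the wording for $r=r^{(1)}$ is off by one application of $\tau$, and a complete write-up must either prove the claim for $r^{(0)}$ and then invoke the last sentence for all $n$, or explicitly note and resolve this discrepancy rather than assert that the transfer is immediate. Relatedly, the identity you quote, $r^{(n)}=\tau^n(r^{(1)})$, should be $r^{(n)}=\tau^n(r^{(0)})=\tau^{n-1}(r^{(1)})$; with that correction, the passage to general $n$ via injectivity of $\tau^n$ on $\{x,y,z\}$ is indeed immediate.
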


\begin{Remark} In terms of information the derived sequences $r^{(n)}$ are as
useful as the original sequence. We will base our subsequent
investigations on the relatively simple properties of the derived
sequence $r^{(1)}$  given in the preceding proposition.   More
information should  be obtainable from a more detailed study of the
derived sequences.
\end{Remark}

% \begin{Corollary}\label{isolation} Let $n\in\NN\cup\{0\}$ be given.  Let $\omega\in\varOmega_\tau$ be given and $P\in\{0,\ldots, 2^n-1\}$ chosen with $ P\in P(\omega)$. Then, $y$
% and $z$ occur only isolated in  the sequence $R_k (\omega):=
% \omega_{P + 2^n k}$ and $x$ occurs either isolated or in the form $x
%  x x $.
%\end{Corollary}
%\begin{proof} Chose a sequence $n_k\to \infty$ with $T^{n_k}
%\omega^{(x)} \to \omega$ and use continuity of the $n$-partition.
%\end{proof}

The $n$-decomposition of $\eta$ gives a way of writing $\eta$ as a
concatenation of the words $p^{(n)}$ and elements from $\{x,y,z\}$.
For example $\eta$ can be written as
$$\eta = (a x a) y (a x a) z (a x a) y (a x \underbrace{a ) x (a}_{a x a}  x a) ...
$$
where we have put brackets around the $p^{(1)} = a x a$. Still,
$\eta$ can contain further occurrences of $p^{(1)}$ as indicated in
the preceding formula. More generally, it is not true that  a
$p^{(n)}$ occurring somewhere in $\eta$ is in fact one of the words
$p^{(n)}$ appearing in the $n$-decomposition of $\eta$. However, it
turns out that whenever $p^{(n)} s p^{(n)}$ occurs in $\eta$ then
both of its $p^{(n)}$ actually stem from the $n$-partition. In this
sense, there is some form of alignment. This is the content of the
next proposition \cite{GLN}.

\begin{Proposition}[Alignment of the $p^{(n)} s p^{(n)}$] \label{Prop-Alignment}
Consider a natural number $n$ and $s\in \{x,y,z\}$. If $p^{(n)} s
p^{(n)}$ occurs in $\eta$ at the position $l$ (i.e. $\eta_l
\eta_{l+1} \ldots \eta_{l + | p^{(n)} s p^{(n)}  | -1} = p^{(n)} s
p^{(n)}$ holds), then $l$ is of the form $1 + k 2^{n+1}$ for some
$k\in\NN\cup\{0\}$. This means that if $ p^{(n)} s p^{(n)}$ occurs
somewhere in $\eta$ then both of its words $p^{(n)}$ actually agree
with blocks $p^{(n)}$ appearing in the $n$-decomposition $\eta =
p^{(n)} r_1^{(n)} p^{(n)} r_2^{(n)} p^{(n)} ....$
\end{Proposition}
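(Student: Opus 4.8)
The plan is to argue by induction on $n$, using the recursion formula (RF) and Proposition \ref{Isolation} about the derived sequence $r = r^{(1)}$. The base case $n=0$ asks that whenever $a s a$ occurs in $\eta$ with $s\in\{x,y,z\}$, the first $a$ sits at an odd position; but we already know from the Positions Proposition that $a$ occurs exactly at the odd positions, so $asa$ (both $a$'s being letters of $\eta$ at distance $2$) forces the first one to be odd. For the inductive step, assume the statement holds for $n$ and suppose $p^{(n+1)} s p^{(n+1)}$ occurs in $\eta$ at position $l$. Writing $p^{(n+1)} = p^{(n)} s_n p^{(n)}$ via (RF), the occurrence of $p^{(n+1)} s p^{(n+1)}$ contains in particular four occurrences of $p^{(n)}$ at positions $l$, $l + 2^{n+1}$, $l + |p^{(n+1)}s| = l + 2^{n+2}$, and $l + 2^{n+2} + 2^{n+1}$, separated respectively by the single letters $s_n$, $s$, $s_n$. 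By the inductive hypothesis applied to, say, the block $p^{(n)} s_n p^{(n)}$ sitting at position $l$, we get $l \equiv 1 \pmod{2^{n+1}}$; so it only remains to upgrade this from $\pmod{2^{n+1}}$ to $\pmod{2^{n+2}}$.

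To do that, consider the $n$-decomposition $\eta = p^{(n)} r_1^{(n)} p^{(n)} r_2^{(n)} \ldots$. Since $l \equiv 1 \pmod{2^{n+1}}$ and $|p^{(n)}| + 1 = 2^{n+1}$, the occurrence of $p^{(n+1)} s p^{(n+1)}$ at position $l$ is aligned with this decomposition, i.e.\ $l = 1 + (j-1)\,2^{n+1}$ for some $j$, and the four $p^{(n)}$'s just identified are exactly the blocks number $j, j+1, j+2, j+3$ of the $n$-decomposition, with the intervening single letters being $r_j^{(n)} = s_n$, $r_{j+1}^{(n)} = s$, $r_{j+2}^{(n)} = s_n$. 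Recall $r_k^{(n)} = \tau^n(r_k)$, and that $\tau^n$ is injective on $\{x,y,z\}$; moreover $s_n = \tau^n(x)$. Hence, pulling back by $\tau^n$, in the derived sequence $r = r^{(1)}$ we have $r_j = x$ and $r_{j+2} = x$, that is, the pattern in $r$ around position $j$ is $x\, r_{j+1}\, x$ with $r_{j+1} = \tau^{-n}(s) \in \{x,y,z\}$. I want to conclude that $j$ is odd; equivalently, that the leftmost $p^{(n)}$ of the occurrence — block number $j$ — sits at a position $\equiv 1 \pmod{2^{n+2}}$.

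The key observation is now a parity statement about where the letter $x$ can appear in the derived sequence $r^{(1)}$, coming from Proposition \ref{Isolation}: $y$ and $z$ occur isolated, always flanked by $x$, and $x$ occurs either isolated or in a run $xxx$, with no $xxxx$. An easy consequence is that in $r^{(1)}$ the positions carrying a letter from $\{y,z\}$ and the "outer" $x$'s alternate in a controlled two-to-one fashion; concretely, one checks directly (again by a short induction using $\tau$, or by inspecting $r^{(1)} = \tau(\eta)$ which literally reads off the $s_n$-sequence interleaved with blocks) that reading $r^{(1)}$ one has a rigid block structure in which any occurrence of a pattern $x\,c\,x$ with $c\in\{x,y,z\}$ must start at an odd index. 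Granting this, $r_j = x$, $r_{j+1} = c$, $r_{j+2} = x$ forces $j$ odd, hence $l = 1 + (j-1)2^{n+1} \equiv 1 \pmod{2^{n+2}}$, which is exactly the claim for $n+1$. The main obstacle — and the step that deserves the most care — is precisely this parity lemma for $r^{(1)}$: one must verify that the alternation forced by Proposition \ref{Isolation} genuinely pins down the residue of the starting index mod $2$ and is not merely a statement about relative spacing; the cleanest route is probably to note that $r^{(1)}$ is itself (essentially) $\eta$ re-encoded, so that its "every other letter" structure mirrors the fact that in $\eta$ the letter $a$ sits at all odd positions, and then transport the needed parity through $\tau^n$ using injectivity on $\{x,y,z\}$.
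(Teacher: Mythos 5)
Your overall strategy is sound: the induction on $n$, the application of the inductive hypothesis to the block $p^{(n)} s_n p^{(n)}$ sitting at position $l$ so as to align the whole occurrence with the $n$-decomposition, and the pullback through $\tau^n$ (which is injective on $\{x,y,z\}$) reducing the step to a statement about the derived sequence are all correct; the base case via the Positions Proposition is also fine. (The paper itself defers the proof of this proposition to \cite{GLN}, so there is no in-text argument to compare with, but your route is exactly in the spirit of the $n$-decomposition technique the paper advertises.)

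The genuine gap is the parity lemma that you explicitly ``grant'': that any occurrence of $x\,c\,x$ in the derived sequence must start at an odd index. As you yourself observe, Proposition \ref{Isolation} cannot deliver this, since its content is invariant under shifting the sequence and hence says nothing about absolute parities; so as written the inductive step is not closed. Moreover, the identity you offer as the cleanest route, $r^{(1)}=\tau(\eta)$, is false ($\eta$ is a fixed point of $\tau$, so $\tau(\eta)=\eta$). The correct anchoring fact is that the derived sequence records the letters of $\eta$ at the even positions, i.e. $r_j=\eta_{2j}$ (equivalently, $r^{(n)}_j=\eta_{j\cdot 2^{n+1}}$), and with this the lemma does follow in two lines from the Positions Proposition: if $j$ were even and $r_j=r_{j+2}=x$, then $\eta_{2j}=\eta_{2j+4}=x$, so $2j$ and $2j+4$ are both divisible by $4$ and each is of the form $2^{3m+1}\cdot(\mbox{odd})$ for some $m$; divisibility by $4$ forces $m\geq 1$ in each case, hence both $2j$ and $2j+4$ would be divisible by $16$, which is impossible since they differ by $4$. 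Thus $j$ is odd and your step $l=1+(j-1)2^{n+1}\equiv 1 \pmod{2^{n+2}}$ goes through. So the proposal is correct in outline, but the crucial parity lemma is currently asserted rather than proved, and the justification sketched for it needs to be replaced by the argument above (or something equivalent that pins down absolute positions, not just relative spacing).
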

%\begin{proof} This follows by induction on $n$. The case $n=0$ is
%clear. Let us assume that the statement holds for $n$. Consider an
%occurrence of
%$$ w = p^{(n+1)} s p^{(n+1)} = p^{(n)} t p^{(n)} s p^{(n)} t p^{(n)}$$
%with suitable letters $s$ and $t$ in $\{x,y,z\}$. By assumption the
%$p^{(n)}$ are well aligned with the $n$-decomposition.
%\smallskip
%\textit{Case 1: $s \neq t$.} By  $s \neq t$,  both $p^{(n)} t
%%p^{(n)}$ in the above formula for $w$  will become a $p^{(n+1)}$ in
%the $(n+1)$-decomposition. This readily gives the statement.
%\smallskip
%\textit{Case 2: $s=t$.} By Proposition \ref{Isolation}, there can be
%no more than three occurrences of $s =t$ in a row. Thus, again the
%first and the last $p^{(n)} t p^{(n)}$ will become a $p^{(n+1)}$ in
%the $(n+1)$-decomposition and the desired statement follows.
%\end{proof}

If $w$ is a finite word in $\mbox{Sub}_\tau$ and $v$ is a prefix of
$w$ and $N$ is a natural number we define the \textit{index of the
word $w$ in $w^N v$} by $N +  \frac{|v|}{|w|}$ and denote it by $Ind
(w, w^N v)$. We then define the \textit{index of the word $w$}  by
$$\mbox{Ind} (w):=\max\{ \mbox{Ind}(w,w^N v) : \mbox{$v$  prefix of $w$}, N\in \NN, w^N
v\in\mbox{Sub}_\tau\}.$$ As our subshift is minimal and aperiodic
the index of every word can easily be seen to be finite.

\begin{Theorem}[Index of $\varOmega_\tau$]
(a) For every $w\in\mbox{Sub}_\tau$ the inequality $\mbox{Ind} (w) <
4 $ holds. In particular, $\eta$ does not contain a fourth power.

(b) We have $ 4 = \sup\{\mbox{Ind} (w) : w\in \mbox{Sub}_\tau\}.$
\end{Theorem}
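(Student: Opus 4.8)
The plan is to derive both statements from the structure of $n$-decompositions together with the local structure of the derived sequence $r^{(1)}$ established in Proposition \ref{Isolation} and the alignment phenomenon of Proposition \ref{Prop-Alignment}. For part (a), suppose toward a contradiction that some $w$ with $\mbox{Ind}(w)\geq 4$ occurs in $\eta$; this means $w^3 v \in \mbox{Sub}_\tau$ for $v$ a prefix of $w$, so in particular $w^4$ (if $v = w$) or at least $www$ followed by a long prefix occurs. First I would reduce to the case where $|w|$ is not too small by direct inspection: the short words ($|w| \leq 7$, say, the length range of $p^{(1)}$ and nearby) are checked by hand against the first few blocks of $\eta$, using Lemma \ref{one-three-plus-block} to see that $axax ax a$ occurs but $axaxaxax$ does not. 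For longer $w$, the key step is to locate, inside $w$, a factor of the form $p^{(n)} s p^{(n)}$ for the appropriate $n$ with $2^{n+1} \leq |w| < 2^{n+2}$; since $w^3 v$ is long it must contain such a block, and hence by Proposition \ref{Prop-Alignment} the occurrences of $p^{(n)}$ inside $w^3 v$ are all aligned with the $n$-decomposition $\eta = p^{(n)} r_1^{(n)} p^{(n)} r_2^{(n)} \ldots$. Writing everything in terms of the $n$-decomposition, a period-$|w|$ repetition in $\eta$ of length $\geq 4|w|$ forces a period-(some multiple of $2^{n+1}$) repetition in the derived sequence $r^{(n)}$, i.e. a fourth power in $r^{(n)}$. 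Since $r^{(n)} = \tau^n(r^{(1)})$ and $\tau^n$ is injective on $\{x,y,z\}$, this yields a fourth power in $r^{(1)}$.

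Now the contradiction comes from Proposition \ref{Isolation}: in $r^{(1)}$ the letters $y,z$ occur only isolated (flanked by $x$), and $x$ occurs only isolated or as $xxx$, so there is no $xxxx$; a short case analysis shows $r^{(1)}$ contains no fourth power of any word at all. Indeed any repeated block $u^4$ in $r^{(1)}$ would, looking at its letters, have to be built from the patterns $\{xyx, xzx, xxx\}$ in an overlapping way, and checking the few possibilities for $|u| \in \{1,2,3\}$ against these constraints (together with the fact that $y$ and $z$ alternate in a controlled way coming from the substitution cycle $x\mapsto y\mapsto z\mapsto x$) rules each out. The main obstacle I expect is precisely bookkeeping this reduction cleanly: making sure the period of the repetition in $\eta$, which is $|w|$ and a priori \emph{not} a multiple of $2^{n+1}$, nonetheless descends to a genuine combinatorial repetition at the level of $r^{(n)}$ — this is where the alignment Proposition \ref{Prop-Alignment} has to be applied carefully, comparing the positions $1 + k2^{n+1}$ of the aligned $p^{(n)}$-blocks with the positions $1 + j|w|$ of the copies of $w$, and arguing that the combined constraints force $|w|$ to be a multiple of $2^{n+1}$ after all (or else $w^4$ simply cannot fit). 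One should also handle the boundary subtlety that $w$ need not \emph{begin} at a block boundary; a shift of $w$ by a fixed offset does not change its index, so one may assume it does.

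For part (b), it suffices to exhibit words $w_n$ with $\mbox{Ind}(w_n) \to 4$. The natural candidates are $w_n := p^{(n)} s_n$ (length $2^{n+1}$), where $s_n = \tau^n(x)$ as in (RF). Using the recursion formula iteratively one shows that $\eta$ contains long stretches that are nearly periodic with period $p^{(n)} s_n$: concretely, since $r^{(1)}$ contains the block $xxx$ (from Lemma \ref{one-three-plus-block}, which gives $axaxaxa$, i.e. $r^{(1)}$ starts with three consecutive blocks $p^{(0)}$ separated by $x$'s), applying $\tau^{n-1}$ produces in $r^{(n-1)}$, hence in the $(n-1)$-decomposition of $\eta$, three consecutive equal separators $\tau^{n-1}(x) = s_{n-1}$; translating back via the recursion $p^{(n)} = p^{(n-1)} s_{n-1} p^{(n-1)}$ this yields an occurrence of $(p^{(n-1)} s_{n-1})^3 p^{(n-1)}$ in $\eta$. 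This word is $w_{n-1}^3 v$ with $v = p^{(n-1)}$ a prefix of $w_{n-1} = p^{(n-1)} s_{n-1}$, and $|v|/|w_{n-1}| = (2^n - 1)/2^n \to 1$, so $\mbox{Ind}(w_{n-1}) \geq 3 + (2^n-1)/2^n \to 4$. Combined with part (a) this gives the supremum is exactly $4$ and is not attained. The only care needed here is to verify the occurrence of the triple separator in each $r^{(n)}$ rigorously from Proposition \ref{Isolation} (which explicitly records that $xxx$ occurs), which is immediate.
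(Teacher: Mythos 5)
Your plan is correct and takes essentially the same route as the paper's proof: the lower bound in (b) by applying $\tau^{n}$ to the word $axaxaxa$ of Lemma \ref{one-three-plus-block}, and the upper bound in (a) by checking short words directly and then using the alignment statement of Proposition \ref{Prop-Alignment} to force $|w|=2^{n+1}$, so that a fourth power of $w$ would produce four equal consecutive separators in the derived sequence $r^{(n)}$, contradicting Proposition \ref{Isolation}. One small remark: once $|w|=2^{n+1}$ is forced, the induced repetition in the derived sequence is of a \emph{single letter}, which is exactly what Proposition \ref{Isolation} excludes, so your stronger claim that $r^{(1)}$ contains no fourth power of any word at all -- which does not follow from Proposition \ref{Isolation} alone, since e.g. $(xyxx)^4$ is locally consistent with its constraints and ruling it out needs the hierarchical (Toeplitz) structure -- is not actually needed and should be dropped or proved differently.
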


\textit{Remark.} The supremum over all the indices is sometimes
known as the critical exponent.

\begin{proof} A proof can be found in \cite{GLN}. Here, we only
sketch the idea. By Lemma \ref{one-three-plus-block} the word $w =a
x a x a x a = v^3 a$ (with $v = ax$) belongs to $\mbox{Sub}_\tau$.
For each $n\in\NN$ we then have
$$\tau^n (v^3 a) = (\tau^n (v))^3 \tau^n (a)= p^{(n)} \tau^n (x)  p^{(n)} \tau^n(x)  p^{(n)}  \tau^n (x) p{(n)}$$
and we infer that the index must be at least $4$. Thus, it suffices
to show that $\eta$ does not contain a fourth power. To do so, it
suffices to consider occurrences of powers of words $w$ in $\eta$.
For short words the statement can easily be checked. Consider now
the case $ |p^{(n)}| +1 = 2^{n+1} \leq |w|\leq |p^{(n+1)}|$ for some
$n\geq 1$. Assume that $w w w$ occurs in $\mbox{Sub}_\tau$. Then,
the Proposition  on alignment gives that the length of $w$ is given
by $|w|= |p^{(n)}| +1 = 2^{n+1}$. This then easily implies the
desired statement.
\end{proof}

\begin{Remark} The proof of the theorem shows that the length of any word
$w\in\mbox{Sub}_\tau$ whose cube $w w w$ also belongs to
$\mbox{Sub}_\tau$ is given by $2^n$ for some $n\in\NN$.
\end{Remark}

\subsection{The word complexity of $\mbox{Sub}_\tau$}
In this section we present a result on  the word complexity of
$\mbox{Sub}_\tau$. A detailed proof can be found in \cite{GLN}.

\bigskip

We define the \textit{word complexity} of the subshift
$(\varOmega_\tau,T)$ as  $$\complexity : \NN\cup
\{0\}\longrightarrow \NN\;\: \; \complexity (L) = \mbox{number of
elements of $\mbox{Sub}_\tau$ of length $L$}.$$

Recall that a word $w\in \mbox{Sub}_\tau$ is called \textit{right
special} if the set of its extensions
$$\{s\in \{a,x,y,z\}: w s\in \mbox{Sub}_\tau\}$$
has more than one element.

\begin{Theorem}[Complexity Theorem]
(a) For any $n\geq 2$  and $L = 2^n + k$ with $0\leq k < 2^{n}$ we
have
$$
\complexity (L+1) - \complexity (L)=\left\{ \begin{matrix}  3 :
0\leq k < 2^{n-1} \\ 2 :  2^{n-1} \leq k < 2^n \end{matrix}\right.$$

(b)  The complexity function $\complexity$ satisfies
$$\complexity (1) = 4, \complexity (2) = 6, \complexity (3) = 8$$
and then for any $n\geq 2$ and $L = 2^n + k$ with $0\leq k < 2^{n}$
$$\complexity (L) =\left\{ \begin{matrix}  2^{n+1} + 2^{n-1} + 3 k :   0\leq k < 2^{n-1} \\
 2^{n+1} + 2^n + 2 k  :  2^{n-1} \leq  k < 2^n \end{matrix}\right.$$

(c) Consider  $n\geq 2$  and $L = 2^n + k$ with $0\leq k < 2^{n}$.

\begin{itemize}
\item  If $0\leq k < 2^{n-1}$, then
 there exist  exactly  two right special words of length $L$.  These are given
 by the suffix of $p^{(n)}$ of length $L$ (which can be extended by
 $x,y,z$) and the suffix of $p^{(n-2)} \tau^{n-2} (x)  p^{(n-1)}$ of length $L$ (which can be
 extended by $\tau^{n-2} (x) $ and by $\tau^{n-1} (x)$).

\item  If $2^{n-1}\leq k < 2^{n-1}$, then
 there exists  exactly one  right special words of length $L$.  This is given
 by the suffix of $p^{(n)}$ of length $L$ (which can be extended by
 $x,y,z$).
\end{itemize}

\end{Theorem}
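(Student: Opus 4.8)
The plan is to establish the Complexity Theorem by combining the standard bijection between right special words and ``new'' words of the next length with the structural information about $\mathrm{Sub}_\tau$ already available, in particular the alignment of the blocks $p^{(n)} s p^{(n)}$ (Proposition \ref{Prop-Alignment}) and the description of the derived sequence $r^{(1)}$ (Proposition \ref{Isolation}). Recall the elementary identity
$$\complexity (L+1) - \complexity (L) = \sum_{w \text{ right special of length } L} \bigl( \#\{s : ws \in \mathrm{Sub}_\tau\} - 1 \bigr),$$
so part (a) follows immediately from part (c): if for $0 \le k < 2^{n-1}$ there are two right special words, one extendable in three ways (the suffix of $p^{(n)}$) and one extendable in two ways, the sum is $2 + 1 = 3$; and for $2^{n-1} \le k < 2^n$ a single right special word extendable in three ways gives $2$. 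Part (b) is then obtained from part (a) by telescoping: fix the base values $\complexity(1)=4$, $\complexity(2)=6$, $\complexity(3)=8$ by direct inspection of $\eta$ (using Proposition \ref{Prop-palindrome} for closure under reflection to cut down cases), and sum the increments from part (a) over the dyadic blocks. Thus the entire content is in part (c), and I would organize the whole proof around proving (c) first, then deducing (a), then (b).

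To prove (c) I would first identify the right special words of a given length $L = 2^n + k$ with $0 \le k < 2^n$. A word $w$ is right special precisely when there exist two occurrences of $w$ in $\eta$ followed by different letters. Using the $n$-decomposition $\eta = p^{(n)} r_1^{(n)} p^{(n)} r_2^{(n)} \ldots$ and the fact (from Proposition \ref{Isolation}, transported by $\tau^n$) that every letter of $\{x,y,z\}$, resp. every admissible pair and the triple $xxx$, appears among the $r_j^{(1)}$, one sees that the suffix of $p^{(n)}$ of length $L$ is always right special: after such a suffix $\eta$ continues with an element of $\{\tau^n(x), \tau^n(y), \tau^n(z)\} = \{x,y,z\}$, and all three values occur, giving three extensions. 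For the second candidate, I would look at the word obtained by overlapping two consecutive blocks: the alignment Proposition \ref{Prop-Alignment} says that an occurrence of $p^{(n)} s p^{(n)}$ always sits at a position $1 + k 2^{n+1}$, so the ambiguity in extending a suffix of length $L$ of a word straddling a block boundary is governed entirely by the possible letters $s$ appearing in the $n$-decomposition at that relative position. Concretely, the word $p^{(n-2)} \tau^{n-2}(x) p^{(n-1)}$ is a subword of $\eta$ (since $x x x$ occurs in $r^{(1)}$, hence $\tau^{n-2}(x)\tau^{n-2}(x)\tau^{n-2}(x)$ among the $r_j^{(n-2)}$, and $p^{(n-1)} = p^{(n-2)}\tau^{n-2}(x) p^{(n-2)}$), and its suffix of length $L$ can be continued either by $\tau^{n-2}(x)$ (if the next derived letter is $x$, continuing a short block) or by $\tau^{n-1}(x)$ (continuing into the next long block), and these two continuations are compatible with $\mathrm{Sub}_\tau$ exactly when $0 \le k < 2^{n-1}$, because that is the range of lengths for which the suffix still ``remembers'' which side of the ambiguous boundary it came from. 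For $2^{n-1} \le k < 2^n$ this second word becomes long enough that its suffix already determines the preceding block, so it is no longer special, leaving only the suffix of $p^{(n)}$. The key book-keeping lemma I would isolate and prove carefully is: for $L$ in the stated range, the only sources of right-specialness are boundaries of blocks in some $m$-decomposition, and $m = n$ (the suffix of $p^{(n)}$) and $m = n-1$ (the overlap word) are the only ones that produce words of exactly length $L$ with genuinely different continuations.

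The main obstacle I expect is the exhaustiveness of the classification of right special words — showing there are \emph{no other} right special words of length $L$ besides the one or two exhibited. This requires a clean argument that any two occurrences of a word $w$ of length $L > 2^n$ in $\eta$ that are followed by different letters must, after applying the alignment proposition, force $w$ to be a suffix of one of the two exhibited words, with no further possibilities hidden in longer-period structure. I would handle this by induction on $n$, passing from $\eta$ to its $1$-decomposition and using that $r^{(n)} = \tau^n(r^{(1)})$ has ``the same combinatorics'' as $r^{(1)}$ (the remark following Proposition \ref{Isolation}), so that the special-word count at level $L \in [2^n, 2^{n+1})$ reduces to the special-word count for the derived sequence at a proportionally smaller length, where the small cases $\complexity(1), \complexity(2), \complexity(3)$ and the explicit occurrences $axa$, $axaxa$, $axaxaxa$ from Lemma \ref{one-three-plus-block} provide the base. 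For the full details I would defer to \cite{GLN} as the excerpt already signals; here the proof would present the reduction, state the two structural inputs (alignment and the shape of $r^{(1)}$), carry out the telescoping for (a) and (b), and indicate the inductive step for the exhaustiveness claim in (c).
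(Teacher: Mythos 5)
Your overall skeleton is fine: the identity $\complexity(L+1)-\complexity(L)=\sum_{w}(\#\{s: ws\in \mbox{Sub}_\tau\}-1)$, the sum running over right special $w$ of length $L$, does reduce (a) to (c), and your identification of the two candidate words and their extensions is correct (at $L=4$ one checks by hand that $yaxa$ extends by $x,y,z$ while $xaxa$ extends only by $x,y$). But note that this is a genuinely different route from the paper's, and the step you defer is the entire content of the theorem. The paper does not classify right special words directly: it computes $\complexity(2^{n+1}-1)$ exactly for every $n$ by listing all words of length $|p^{(n)}|$ from the $n$-partition, uses the exhibited right special words only to get a \emph{lower} bound on each increment, and then observes that the telescoped lower bounds between the lengths $2^{n+1}-1$ and $2^{n+2}-1$ already add up to the exactly known difference; hence all lower bounds are equalities and the exhaustiveness statement in (c) falls out for free. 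Your plan attacks exhaustiveness head-on, and there your sketch has a genuine gap: the ``key book-keeping lemma'' that right-specialness can only arise from block boundaries of the $n$- and $(n-1)$-decompositions is asserted, not proven, and it does not follow directly from Proposition \ref{Prop-Alignment}. That proposition only localizes occurrences of words containing a full block $p^{(m)}sp^{(m)}$, which has length $2^{m+2}-1$; for a word of length $L<2^{n+1}$ this forces $m\le n-2$, so the alignment you may actually invoke lives at a strictly lower level than the two levels named in your dichotomy, and additional analysis is needed to exclude further right special words. Likewise, the proposed induction via the derived sequences is only gestured at: right special words of $\eta$ of length in $[2^n,2^{n+1})$ do not obviously correspond to right special words of $r^{(1)}$ at a proportionally smaller length, since such a window need not be aligned with a single block boundary. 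Deferring exactly this point to \cite{GLN} means the proposal does not prove (c).

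Two smaller points. First, your telescoping for (b) is short one base value: part (a) starts at $L=4$, so the step $\complexity(3)=8$ to $\complexity(4)=10$ must be checked separately; one cannot simply extend (a) downward, since at $n=1$, $k=0$ the analogous increment is $2$, not $3$. Second, if you want to keep the (c)-first architecture, the cleanest repair is to import the paper's counting device: keep your lower bound exactly as proposed, compute $\complexity(2^{n+1}-1)$ exactly via the $n$-partition, and let the resulting sandwich deliver the exhaustiveness; then (c) becomes a corollary of the count rather than its input.
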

\begin{proof} The proof relies on a detailed investigation of the
$n$-partition of $\eta$. This allows one to directly determine all
words of length $|p^{(n)}| = 2^{n+1} - 1$ in  $\mbox{Sub}_\tau$  and
this gives $\complexity (2^{n+1} - 1)$ for all $n\in \NN\cup \{0\}$.
At the same time the study of the $n$-partition allows one to obtain
a lower bound on the difference $\complexity (L+1) - \complexity (L)
$ for all $L\in\NN$. This in turn gives a lower bound on
$\complexity$. Combining the lower bound and the precise values we
obtain the statements of the theorem.
\end{proof}

\begin{Remark} As $\mbox{Sub}_\tau$ is closed under reflections by
Proposition \ref{Prop-palindrome} the above statements about right
special words easily translate on corresponding statements about
left special words. (Here,  a word $w\in \mbox{Sub}_\tau$ is called
\textit{left special} if the set $\{s\in \{a,x,y,z\}: s w \in
\mbox{Sub}_\tau\}$ has more than one element.) This shows in
particular that the words $p^{(n)}$, $n\in \NN\cup \{0\}$, are both
right special and left special (and are the only words with this
property).
\end{Remark}

\subsection{Generating the fixed point $\eta$ by an automaton} In
this section we present an automaton that  generates the fixed point
$\eta$ of the substitution $\tau$. This is well in line with general
theory on how to exhibit fixed points of substitutions by automata,
see e.g.  the monograph \cite{AS} to which we also refer for
background on automata. There are numerous applications of automatic
sequences in group theory. For a recent example and  a list of
further reference we refer the reader to \cite{GLNS}.

\smallskip

\begin{figure}[h!]
\begin{center}
\hspace*{-1cm}
\includegraphics[scale=0.5]{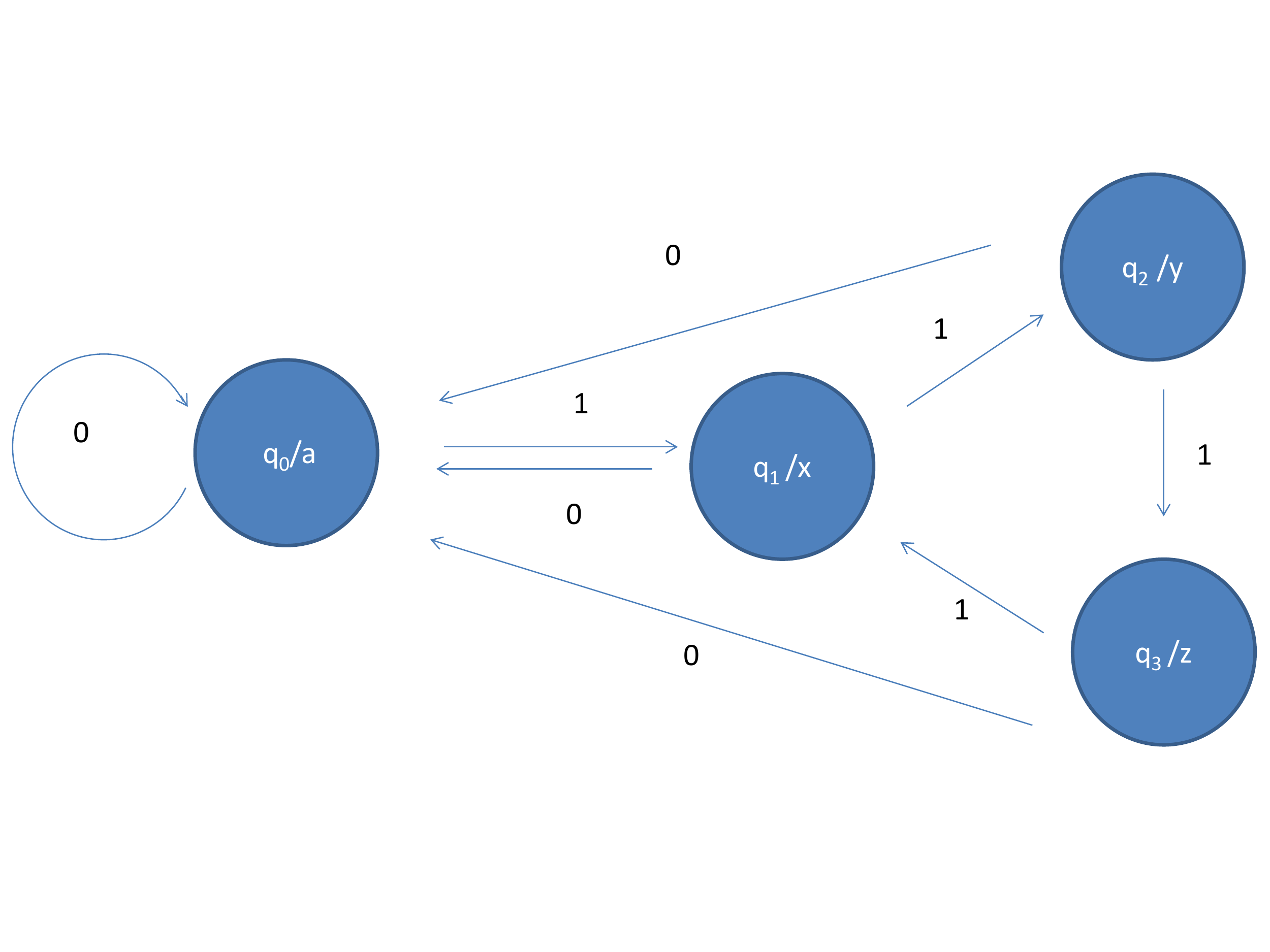}
\hspace*{-1cm}
 \caption{The automaton generating $\eta$}
\label{automaton}
\end{center}
\end{figure}

Consider the automaton $\mathcal{A}$ from figure \ref{automaton}. It
is an automaton over the alphabet $\{0,1\}$ with  four states
$q_0,q_1,q_2,q_3$, which are labeled by $a,x,y,z$ respectively.
Then, the  infinite  sequence
$$\mathcal{A}_{q_0} : \NN\cup\{0\}\longrightarrow \{a,x,y,z\}$$
generated by the automaton with initial state $q_0$ is defined as
follows: Write $n\in \NN\cup \{0\}$ in its binary expansion as
$$n = x_0 2^i + x_1 2^{i-1} + \cdots + x_{i-1} 2 + x_i$$
with $i\in \NN\cup \{0\}$ and $x_j \in \{0,1\}$, $j = 0, \ldots, i$.
Consider now the path $p_n$ in the automaton  starting in $q_0$ and
following the sequence $x_0 x_1\ldots x_i$. Then, $\mathcal{A}_{q_0}
(n)$ is defined to be the label of the state where this path ends.

\begin{Theorem} The fixed point $\eta$ of $\tau$ agrees with
$\mathcal{A}_{q_0}$ (where the fixed point is considered as a map
from $\NN\cup \{0\}$ to $\{a,x,y,z\}$).
\end{Theorem}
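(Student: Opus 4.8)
The plan is to reduce the identity to a statement about $2$-adic valuations and then read it off the ``positions of $a,x,y,z$'' proposition above. First I would fix the bookkeeping: since $\mathcal{A}_{q_0}$ is indexed by $\NN\cup\{0\}$ while $\eta=\eta_1\eta_2\ldots$ is indexed by $\NN$, the assertion to be proved is $\mathcal{A}_{q_0}(n)=\eta_{n+1}$ for every $n\geq 0$. Next I would record the two structural features of the automaton in Figure~\ref{automaton} that drive everything: every edge labelled $0$ ends at $q_0$, and the edges labelled $1$ run around the path $q_0\to q_1\to q_2\to q_3\to q_1$ (with $q_0,q_1,q_2,q_3$ carrying the labels $a,x,y,z$). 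Reading the binary word $x_0x_1\ldots x_i$ of $n$ from the most significant digit, starting at $q_0$, the first feature says that immediately after the last occurrence of the digit $0$ the run is again in $q_0$; hence the terminal state — and therefore $\mathcal{A}_{q_0}(n)$ — depends only on the length $t$ of the maximal block of $1$'s at the right end of the binary word of $n$, i.e.\ on the number of trailing ones of $n$. (This also covers the extreme cases $n=0$, where the word ``$0$'' has $t=0$, and $n=2^{k}-1$, where $t=k$ and no $0$ is read.)

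Then I would evaluate this dependence explicitly. Running $1^{t}$ from $q_0$ along the $1$-cycle, one stays at $q_0$ for $t=0$ and, for $t\geq 1$, reaches the state $q_1,q_2,q_3$ — carrying labels $x,y,z$ — according to whether $t\equiv 1,2,0\pmod 3$. So $\mathcal{A}_{q_0}(n)=a$ if $t=0$, and for $t\geq 1$ it equals $x$, $y$ or $z$ according to $t\bmod 3$ in that same correspondence. Finally, the number $t$ of trailing ones of $n$ is exactly the $2$-adic valuation of $n+1$: if $n$ ends in the block $1^{t}$ then $n+1$ ends in $0^{t}$ with a $1$ just above them (or $n+1=2^{t}$), so $v_2(n+1)=t$.

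To conclude I would invoke the Proposition on the positions of $a,x,y,z$ in $\eta$: it says that $\eta_m=a$ iff $m$ is odd, i.e.\ $v_2(m)=0$, and that $\eta_m=x,y,z$ hold iff $v_2(m)$ lies respectively in $\{1,4,7,\ldots\}$, $\{2,5,8,\ldots\}$, $\{3,6,9,\ldots\}$ — that is, iff $v_2(m)\geq 1$ and $v_2(m)\equiv 1,2,0\pmod 3$ respectively. Applying this with $m=n+1$ and $v_2(n+1)=t$ matches the four cases of the previous paragraph one for one, giving $\mathcal{A}_{q_0}(n)=\eta_{n+1}$ for all $n$. The only genuine subtleties are clerical: keeping the index shift and the boundary cases ($n=0$ and all-ones $n$) consistent between the valuation description and the automaton, and transcribing the transition function correctly from Figure~\ref{automaton}. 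As an alternative closer to the combinatorics of $\tau$, the same statement follows by induction on the number of binary digits from $(RF)$, $p^{(n+1)}=p^{(n)}s_np^{(n)}$: one checks that the automaton has the matching self-similarity $\mathcal{A}_{q_0}(2^{n+1}+j)=\mathcal{A}_{q_0}(j)$ for $0\leq j\leq 2^{n+1}-2$ (again because a $0$ resets the run to $q_0$) together with $\mathcal{A}_{q_0}(2^{n+1}-1)=s_n$, where $s_n=\tau^n(x)$ cycles through $x,y,z$ with $n\bmod 3$.
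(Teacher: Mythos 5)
Your proof is correct, but it follows a genuinely different route from the paper. The paper proves, by induction on $n$ and simultaneously for all four initial states, that the word $f^{(n+1)}(q_i)$ read off the automaton over all length-$(n+1)$ binary words in lexicographic order equals $p^{(n)}s_{n+i}$; the theorem is then immediate because $f^{(n+1)}(q_0)$ is the length-$2^{n+1}$ prefix of $\mathcal{A}_{q_0}$ while $p^{(n)}s_n$ is the corresponding prefix of $\eta$, by the recursion formula $(RF)$. You instead extract a closed-form description of the automatic sequence: since every $0$-edge returns the run to $q_0$ and the $1$-edges cycle through $q_1,q_2,q_3$, the value $\mathcal{A}_{q_0}(n)$ depends only on the number $t$ of trailing ones of $n$, i.e.\ on $v_2(n+1)$, and you then match this against the Proposition on the positions of $a,x,y,z$ in $\eta$, which characterizes $\eta_m$ exactly through $v_2(m)$ (the four position sets there partition $\NN$, so the needed ``if and only if'' is available). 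Your reading of the automaton from the figure is consistent with the transition rule the paper itself uses ($f^{(n+2)}(q_i)=f^{(n+1)}(q_0)f^{(n+1)}(q_{i+1})$ with $q_4=q_1$), and your index bookkeeping $\mathcal{A}_{q_0}(n)=\eta_{n+1}$ and the boundary cases ($n=0$, $n=2^k-1$) are handled correctly. What your route buys is an explicit formula for $\eta$ in terms of the $2$-adic valuation, and it recycles an already proved result instead of running a new induction; what the paper's route buys is a self-contained argument from $(RF)$ alone and the stronger statement describing the sequence generated from every initial state $q_i$, not just $q_0$. Your sketched alternative at the end (self-similarity $\mathcal{A}_{q_0}(2^{n+1}+j)=\mathcal{A}_{q_0}(j)$ for $j\le 2^{n+1}-2$ plus $\mathcal{A}_{q_0}(2^{n+1}-1)=s_n$) is essentially a single-state variant of the paper's induction and also works.
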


This theorem is an immediate consequence of the next proposition. To
state the proposition we will need some further pieces of notation.
For each $n\in \NN\cup \{0\}$ and each state $q$ of the automaton we
define $f^{(n)} (q)$ to be  the   word over $\{a,x,y,z\}$ of length
$2^{n}$ obtained  in the following way: Let $v_1,\ldots, v_{2^n}$ be
the list of  all words of length $n$  over $\{0,1\}$ in
lexicographic order (where $0< 1$). Consider now for each $k =
1,\ldots, 2^n$ the path  in the automaton starting at $q$ and
following the word $v_k$. Then, the $k$-th letter of $f^{(n)}(q)$ is
defined to be the label of the state where this path ends.

Define the letters
$$ s_n = \tau^n (x) =\left\{ \begin{array}{ccc}  x &:& n = 3 k, k\in \NN\cup \{0\} \\ y &:& n = 3k+1, k\in \NN\cup\{0\}\\
z&:& n = 3 k + 2, k \in \NN \cup\{0\}  \end{array} \right.$$ and
recall the recursion formula
$$  p^{(n+1)} =  p^{(n)}  s_n p^{(n)}$$
with  $p^{(n)} :=\tau^n (a)$.

\begin{Proposition} For each natural number $n$ and each $i = 0,1,2,3$ we have
$$ f^{(n+1)} (q_i) = p^{(n)} s_{n+i}.$$
\end{Proposition}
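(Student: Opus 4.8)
The plan is to prove the identity by induction on $n$, the engine being the self‑similar way in which the words $f^{(n)}(q)$ are assembled from the one‑step transitions of the automaton, together with the recursion formula (RF) and the three‑periodicity $s_{m+3}=s_m$ (which is nothing but $\tau^3(x)=x$).

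The first step is to record the elementary fact that drives the recursion. Listing the binary words of length $n+1$ lexicographically (with $0<1$), the first $2^n$ of them are the words $0w$ with $w$ running through the length‑$n$ words in lexicographic order, and the last $2^n$ are the words $1w$. Reading $0w$ (respectively $1w$) from a state $q$ means first passing to $\delta(q,0)$ (respectively $\delta(q,1)$), where $\delta$ denotes the one‑step transition map of the automaton of Figure~\ref{automaton}, and then reading $w$. Hence, for every state $q$ and every $n\in\NN\cup\{0\}$,
$$ f^{(n+1)}(q) = f^{(n)}\big(\delta(q,0)\big)\,f^{(n)}\big(\delta(q,1)\big), $$
while $f^{(0)}(q)$ is the single letter labelling $q$. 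From Figure~\ref{automaton} one reads off $\delta(q_i,0)=q_0$ for $i=0,1,2,3$, and $\delta(q_i,1)=q_{i+1}$ for $i=0,1,2$ whereas $\delta(q_3,1)=q_1$.

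Next I would treat the base case $n=0$: the recursion gives $f^{(1)}(q_i)=f^{(0)}(q_0)\,f^{(0)}\big(\delta(q_i,1)\big)=a\,\ell_i$, where $\ell_i$ is the label of $\delta(q_i,1)$; since $q_1,q_2,q_3$ are labelled $x=s_0,\ y=s_1,\ z=s_2$, one obtains $f^{(1)}(q_i)=a\,s_i=p^{(0)}s_{0+i}$ for $i=0,1,2$, and $f^{(1)}(q_3)=a\,x=a\,s_3=p^{(0)}s_{0+3}$ because $s_3=s_0$. For the inductive step, assume the identity for some $n\geq 0$. If $i\in\{0,1,2\}$, then
$$ f^{(n+2)}(q_i)=f^{(n+1)}(q_0)\,f^{(n+1)}(q_{i+1})=\big(p^{(n)}s_n\big)\big(p^{(n)}s_{n+i+1}\big)=\big(p^{(n)}s_np^{(n)}\big)s_{(n+1)+i}=p^{(n+1)}s_{(n+1)+i}, $$
the last equality being (RF). If $i=3$, then, using $\delta(q_3,1)=q_1$,
$$ f^{(n+2)}(q_3)=f^{(n+1)}(q_0)\,f^{(n+1)}(q_1)=\big(p^{(n)}s_n\big)\big(p^{(n)}s_{n+1}\big)=p^{(n+1)}s_{n+1}=p^{(n+1)}s_{(n+1)+3}, $$
where in the final step one invokes $s_{n+1}=s_{n+4}$. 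This closes the induction.

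The whole argument is essentially bookkeeping; the one place that genuinely needs attention is the \emph{wrap‑around} in the case $i=3$, where one must use $\tau^3(x)=x$ (equivalently $s_{m+3}=s_m$) to identify $f^{(n+2)}(q_3)=p^{(n+1)}s_{n+1}$ with the asserted value $p^{(n+1)}s_{n+4}$; everything else follows from the block structure $f^{(n+1)}(q)=f^{(n)}(\delta(q,0))f^{(n)}(\delta(q,1))$ and (RF). The only other thing to be careful about is transcribing the transitions from Figure~\ref{automaton} correctly: a useful consistency check is that the resulting $\eta=\mathcal{A}_{q_0}$ reproduces $\eta_1=a,\ \eta_2=x,\ \eta_3=a,\ \eta_4=y,\dots$, and more conceptually that $\delta(\cdot,0)\equiv q_0$ encodes the fact that $a$ occurs exactly at the odd positions of $\eta$.
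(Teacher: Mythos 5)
Your proof is correct and takes essentially the same route as the paper's: induction on $n$ using the block decomposition $f^{(n+2)}(q_i) = f^{(n+1)}(q_0)\, f^{(n+1)}(q_{i+1})$ (with $q_4 = q_1$) obtained from the lexicographic ordering, combined with the recursion formula (RF). Your explicit treatment of the wrap-around case $i=3$ via $s_{n+4}=s_{n+1}$, and starting the induction at $n=0$ instead of checking $n=1$ by inspection, are only minor presentational refinements of the paper's argument.
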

\begin{proof} This is proven by induction. The case  $n=1$ follows
by inspection. Assume now that the statement is true for some $n\geq
1$ and consider $n+1$. Then the lexicographic ordering of the words
of length $n+2$ over $\{0,1\}$ is given by
$$ 0 v_1,\ldots 0 v_{2^{n+1}}, 1 v_1,\ldots 1, v_{2^{n+1}},$$
where $v_1,\ldots, v_{2^{n+1}}$ is the lexicographic ordering of the
words of length $n+1$ over $\{0,1\}$. Thus, from the rules of the
automaton we obtain  for each $i = 0,1,2,3$
$$f^{(n+2)} (q_i) = f^{(n+1)} (q_0) f^{(n+1)} (q_{i+ 1}),$$
where we set $q_{4} = q_1$. From our assumption for $n$ and the
recursion  we then find
$$  f^{(n+2)} (q_i) = p^{(n)} s_{n}  p^{(n)} s_{n + 1+  i} =
p^{(n+1)} s_{n+1+i}$$  for each $i =0,1,2,3$ and this is the desired
statement.
\end{proof}

\subsection{Replacing  $\tau$ by a primitive substitution}
\label{section-recoding}  The substitution $\tau$ arises naturally
in the study of Grigorchuk groups $G$ and its Schreier graphs (see
below). From the point of view of subshifts  it has the (slight)
disadvantage of not being primitive. It turns out that it is
possible to find a primitive substitution $\xi$ with the same fixed
point - and hence the same subshift - as $\tau$.  This can then be
used to obtain alternative proofs for   the (proven above) linear
repetitivity and pure discreteness of the spectrum. This is
discussed at the end of this section. The material presented here
was pointed out to us by Fabien Durand \cite{Dur2}.

\smallskip

Consider the substitution $\zeta$ on the alphabet $\{a,x,y,z\}$ with
$$\zeta (a) = ax, \zeta(x = a y,\zeta (y) = a z, \zeta (z) = ax (= \zeta
(a)).$$ As any letter of the alphabet is contained in $\zeta^4 (s)$
for  any letter $s$, this is a primitive substitution.

To relate it to $\tau$ we  use (again) for $n\in \NN\cup \{0\}$ the
letters $ s_n = \tau^n (x)$ as well  as the recursion formula $
p^{(n+1)} =  p^{(n)}  s_n p^{(n)}$ with  $p^{(n)} :=\tau^n (a)$.

\begin{Proposition} For any natural number $n$  the equality
$$\zeta^n (a) = \tau^{n-1} (a) s_{n-1}$$
holds. In particular, the fixed point $\eta$ of $\tau$ agrees with
the fixed point of $\zeta$.
\end{Proposition}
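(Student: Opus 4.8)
The plan is to prove the identity $\zeta^n(a) = \tau^{n-1}(a)\,s_{n-1} = p^{(n-1)} s_{n-1}$ by induction on $n$, carrying along the recursion formula (RF) for $\tau$ and a companion recursion for $\zeta$. First I would verify the base case $n=1$ directly: $\zeta(a) = ax = a\cdot s_0$ since $s_0 = \tau^0(x) = x$ and $p^{(0)} = a$. For the inductive step I need to understand how $\zeta$ acts on a word of the form $p^{(n-1)} s_{n-1}$. The key observation is that $\zeta$ sends each of the four letters $a,x,y,z$ to a word of length $2$ beginning with $a$, namely $\zeta(a)=ax$, $\zeta(x)=ay$, $\zeta(y)=az$, $\zeta(z)=ax$. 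So applying $\zeta$ to any word simply "doubles" it in a controlled way; in particular $\zeta(p^{(n-1)})$ should be closely related to $p^{(n)}$, and I expect $\zeta(p^{(n-1)}) = p^{(n)}\,(\text{something})$ or, after splitting off the trailing letter, exactly $\tau^{n}(a)$ up to a single letter.

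The cleanest route is to prove two statements simultaneously by induction on $n$: (i) $\zeta^n(a) = p^{(n-1)} s_{n-1}$, and (ii) a formula for $\zeta$ applied to the building blocks, e.g. $\zeta\big(p^{(n-1)}\big) = p^{(n)}\, a$ or equivalently $\zeta\big(p^{(n-1)} s_{n-1}\big) = p^{(n)} s_n$. For (ii) I would use (RF): $p^{(n)} = p^{(n-1)} s_{n-1} p^{(n-1)}$, so $\zeta(p^{(n)}) = \zeta(p^{(n-1)})\,\zeta(s_{n-1})\,\zeta(p^{(n-1)})$, and then one plugs in the inductive description of $\zeta(p^{(n-1)})$ and the explicit value $\zeta(s_{n-1})$ (which is $a s_n$, since $\zeta(x)=ay$, $\zeta(y)=az$, $\zeta(z)=ax$ exactly mirror the cyclic shift $s_{n-1}\mapsto s_n$). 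The arithmetic on indices — checking that $s_{n-1}\in\{x,y,z\}$ cycles correctly and that $\zeta$ applied to $s_{n-1}$ yields $a\,s_n$ with the right index $n$ mod $3$ — is routine case-checking that I would not spell out in full.

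Once the identity $\zeta^n(a) = p^{(n-1)} s_{n-1}$ is established, the statement about fixed points follows immediately: since $p^{(n-1)} = \tau^{n-1}(a)$ is a prefix of $\eta$ of length $2^n - 1$ with $|p^{(n-1)}|\to\infty$, the words $\zeta^n(a)$ are longer and longer prefixes of $\eta$; as $\zeta^n(a)$ is also, by primitivity and the standard theory of substitutions, a sequence of prefixes of the unique fixed point of $\zeta$ starting with $a$, the two fixed points agree. The main obstacle I anticipate is purely bookkeeping: keeping the index $n-1$ versus $n$ consistent between the two recursions and correctly matching $\zeta$ on $\{x,y,z\}$ with the cyclic behavior of $s_n$; there is no conceptual difficulty, but an off-by-one error is easy to make, so I would state the auxiliary claim (ii) explicitly and prove it in lock-step with (i).
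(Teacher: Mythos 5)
Your strategy is correct and it closes, but it is genuinely different from the paper's. The paper proves the identity by strong induction on the exponent: it checks $n=1,2,3$ by hand, telescopes $\zeta^{n+1}(a)=\zeta^n(a)\,\zeta^{n-1}(a)\,\zeta^{n-2}(a)\,\zeta^{n-2}(a)$ (using $\zeta(x)=ay$, $\zeta(y)=az$, $\zeta(z)=ax=\zeta(a)$), substitutes the inductive hypothesis at the three indices $n,n-1,n-2$, and reassembles via (RF) together with $s_{n-3}=s_n$. You instead push a single application of $\zeta$ through the block structure: the auxiliary identity you need is $\zeta\bigl(p^{(n-1)}s_{n-1}\bigr)=p^{(n)}s_n$, which follows by ordinary one-step induction from (RF) and the observation $\zeta(s)=a\,\tau(s)$ for $s\in\{x,y,z\}$ (so $\zeta(s_{n-1})=a\,s_n$); iterating then gives $\zeta^n(a)=p^{(n-1)}s_{n-1}$ from the single base case $\zeta(a)=ax$. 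This buys a cleaner induction (one base case, hypothesis at one index only), whereas the paper avoids naming an auxiliary identity at the cost of the four-factor telescoping and three base cases. One slip to fix: your first formulation of the auxiliary claim, $\zeta(p^{(n-1)})=p^{(n)}a$, is false (e.g. $\zeta(a)=ax\neq axaa$); the correct version is $\zeta(p^{(n-1)})\,a=p^{(n)}$, which is indeed equivalent to your second displayed form $\zeta(p^{(n-1)}s_{n-1})=p^{(n)}s_n$, and with it the inductive step reads $\zeta(p^{(n)})\,a=\zeta(p^{(n-1)})\,a\,s_n\,\zeta(p^{(n-1)})\,a=p^{(n)}s_np^{(n)}=p^{(n+1)}$. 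For the final assertion about fixed points you do not need primitivity: since $\zeta(a)$ begins with $a$ and $|\zeta^n(a)|\to\infty$, the words $\zeta^n(a)$ are nested prefixes of the unique $\zeta$-fixed point starting with $a$, and by your identity they are prefixes of $\eta$ (each $p^{(n-1)}s_{n-1}$ is a prefix of $p^{(n)}$), so the two fixed points coincide.
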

\begin{proof} This is shown by induction. The cases $n =1,2,3$ are
easily checked by direct inspection. Assume now that the statement
is true for all integers up to  some  $n\geq 3$. Then, we can
compute
$$\zeta^{n+1} (a) = \zeta^{n} (a x) = \zeta^n (a) \zeta^n (x) = \zeta^n (a)
\zeta^{n-1}(ay) = ... = \zeta^n (a) \zeta^{n-1} (a) \zeta^{n-2} (a)
\zeta^{n-3} (ax).$$ By $\zeta (a) = ax $ we then find
$$ \zeta^{n+1} (a) = \zeta^n (a) \zeta^{n-1} (a) \zeta^{n-2} (a) \zeta^{n-2} (a).$$
From our assumption on $n$ we then infer
$$\zeta^{n+1} (a) = \tau^{n-1} (a) s_{n-1} \tau^{n-2} (a) s_{n-2}
\tau^{n-3} (a)  s_{n-3} \tau^{n-3} (a) s_{n-3}.$$ Successive
application of the recursion formula and the fact that $s_{n-3} =
s_n$ then give
\begin{eqnarray*}
\zeta^{n+1} (a) &= & \tau^{n-1} (a) s_{n-1} \tau^{n-2} (a) s_{n-2}
(\tau^{n-3} (a) s_{n-3} \tau^{n-3} (a)) s_{n-3}\\
&=& \tau^{n-1} (a) s_{n-1} \tau^{n-2} (a) s_{n-2} \tau^{n-2} (a)
 s_{n-3}\\
&=& \tau^{n-1} (a) s_{n-1} \tau^{n-1} (a)  s_{n-3}\\
&=& \tau^{n} (a) s_{n-3}\\
&=& \tau^{n} (a) s_{n}.
\end{eqnarray*}
This is the desired statement.
\end{proof}

\begin{Corollary} The subshift $(\varOmega_\zeta,T)$ generated by the
primitive substitution $\zeta$ agrees with the subshift
$(\varOmega_\tau,T)$.
\end{Corollary}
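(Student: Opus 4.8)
The plan is to reduce the statement to the equality of languages $\mbox{Sub}_\zeta = \mbox{Sub}_\tau$, where, in complete analogy with $\tau$, one sets $\mbox{Sub}_\zeta := \bigcup_{s\in\mathcal{A},\, n\in\NN\cup\{0\}} \mbox{Sub}(\zeta^n(s))$ and $\varOmega_\zeta := \{\omega\in\mathcal{A}^\ZZ : \mbox{Sub}(\omega)\subset\mbox{Sub}_\zeta\}$. Once the languages agree, the two subshifts agree by their very definition, being the set of bi-infinite words all of whose finite subwords lie in the common language. (Alternatively, both systems are minimal --- the first by Theorem \ref{Theorem-tau-linear-repetitive}, the second because $\zeta$ is primitive --- and a minimal subshift is determined by its language, so the same conclusion follows.) Since the paper has already shown $\mbox{Sub}_\tau = \mbox{Sub}(\eta)$, it therefore suffices to prove $\mbox{Sub}_\zeta = \mbox{Sub}(\eta)$.

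For the inclusion $\mbox{Sub}(\eta)\subset\mbox{Sub}_\zeta$, I would first observe that $\zeta(a) = ax$ begins with $a$, so that $\zeta^{k}(a) = \zeta^{k-1}(a)\,\zeta^{k-1}(x)$ begins with $\zeta^{k-1}(a)$ for every $k\geq 1$; hence the words $\zeta^k(a)$ form a nested sequence of prefixes whose limit is a one-sided fixed point of $\zeta$. By the previous proposition, $\zeta^n(a) = \tau^{n-1}(a)\,s_{n-1}$, which (by the recursion formula $p^{(n)} = p^{(n-1)} s_{n-1} p^{(n-1)}$) is a prefix of $p^{(n)} = \tau^n(a)$, which in turn is a prefix of $\eta$; so the fixed point of $\zeta$ is exactly $\eta$, and $\mbox{Sub}(\eta) = \bigcup_n \mbox{Sub}(\zeta^n(a)) \subset \mbox{Sub}_\zeta$.

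For the reverse inclusion I would invoke primitivity of $\zeta$: there is an $N$ such that every letter of $\mathcal{A}$ occurs in $\zeta^N(a)$. Given $w\in\mbox{Sub}(\zeta^n(s))$ for some $s\in\mathcal{A}$ and some $n$, the letter $s$ occurs in $\zeta^N(a)$, hence (applying $\zeta^n$ and using that substitutions are monoid homomorphisms) $\zeta^n(s)$ occurs in $\zeta^{n+N}(a)$, and therefore $w\in\mbox{Sub}(\zeta^{n+N}(a)) \subset \mbox{Sub}(\eta)$ by the previous paragraph. This yields $\mbox{Sub}_\zeta \subset \mbox{Sub}(\eta)$, so $\mbox{Sub}_\zeta = \mbox{Sub}(\eta) = \mbox{Sub}_\tau$, and hence $\varOmega_\zeta = \varOmega_\tau$.

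All of the above is genuinely routine; the only point requiring mild care is matching conventions --- namely that ``the subshift generated by the primitive substitution $\zeta$'' means the system built from $\mbox{Sub}_\zeta$ in the same manner $(\varOmega_\tau,T)$ was built from $\mbox{Sub}_\tau$, rather than, say, the orbit closure of a fixed point. For primitive substitutions these coincide, so there is no real obstacle; indeed one could bypass the argument above entirely by quoting the standard fact that a primitive substitution subshift equals the orbit closure of any of its fixed points and then applying the previous proposition directly.
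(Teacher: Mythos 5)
Your proposal is correct and takes essentially the same route as the paper: there the corollary is presented as an immediate consequence of the preceding proposition (the fixed points of $\zeta$ and $\tau$ coincide), and your argument simply fills in the routine verification that both languages equal $\mbox{Sub}(\eta)$ and hence that $\varOmega_\zeta=\varOmega_\tau$.
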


As it is well known, see e.g. \cite{Dur,DZ}, that subshifts
associated to primitive substitutions are linearly repetitive, an
immediate consequence of the previous corollary is that
$(\varOmega_\tau,T)$ is linearly repetitive. Also, as $\zeta$ has
constant length (i.e. the length of $\zeta (t)$ is the same for any
letter $t$) and $\zeta(t)$ starts with $a$ for any letter $t$ we can
apply a result of Dekking \cite{Dek} to obtain purely discrete
spectrum.

\section{Background on graphs and  dynamical
systems}\label{Section-Background-Graphs-etc} In this section we
recall some basic notions and concepts from the theory of graphs,
dynamical systems,  and Schreier graphs. In the next section we will
meet all these abstract concepts in the context of a  particular
group.

%\subsection{The space of rooted labeled graphs}\label{subsection-graphs}
Here we first  recall some terminology from the theory of graphs and
introduce the topological space of (isomorphism classes of) rooted
labeled graphs.

%Our graphs will be  unoriented and, following the standard
%convention, we will consider each non-oriented edges as a pair of
%oriented edges.
Let $\mathcal{B}$ be a finite non-empty set together with an
involution $\mathcal{B}\longrightarrow  \mathcal{B}, b\mapsto
\overline{b}$.  A \textit{graph with edges labeled by $\mathcal{B}$}
is a pair $(V,E)$ consisting of a set $V$ and a set $E\subset
V\times V \times \mathcal{B}$ such that $(v,w,b)$ belongs to $E$ if
 $(w,v,\overline{b})$ belongs to $E$.  The elements of $V$ are
called vertices and the elements of $E$ are called edges. Whenever $
e= (v,w,b)$ is an edge, then $b$ is called the \textit{label},  $v
=o(e)$ the \textit{origin} and $w = t(e)$ the \textit{terminal
vertex} of the edge. We say that \textit{there is an edge from the
vertex
 $v$ the the vertex $w$ with  label $b$} if $(v,w,b)$  belongs to $E$.
 An edge $e$ is said
to \textit{emanate} from the vertex $v$ if $v = o (e)$. The number
of edges emanating from a vertex is called the \textit{degree} of
the vertex. An edge of the form $(v,v,b)$ is called a \textit{loop}
at $v$ (with label $b$).

%If $\mathcal{B}$ has only one element we omit it from notation and
%speak about the associated graphs as \textit{non-labeled}.

%The labeled graphs we are interested in will have the additional property
%that for any vertex $v$ and any label $b$ there exists a unique edge
%with the label $b$ emanating from $v$. For the development of the
%general background in this section, this property will not be
%needed.

%A \emph{path of length $n\in \NN$} is a finite sequence of edges
%$(e_1,\ldots,e_n)$ such that $o(e_{i+1}) = t(e_i)$ for each $i =
%1,\ldots,n-1$. Two different vertices $x,y \in V$ are said to be
%\emph{connected}, if there exists a path $(e_1,\ldots,e_n),$ such
%that $o(e_1) = x$ and $t(e_n) = y$. If  the different $x,y \in V$
%are connected their \emph{combinatorial distance}, $d(x,y),$ is the
%length of the shortest path connecting them. The combinatorial
%distance of $x\in V$ to itself is defined to be zero.  If $x$ and
%$y$ are different and  not connected we set $d(x,y)~=~\infty$.  A
%graph is called \textit{connected} if the combinatorial distance
%between any of its vertices is finite. A \textit{connected
%component} of a graph $G = (V,E)$  is a graph whose vertex set is  a
%maximal set of connected vertices in $V$ together with all edges
%from $E$  between these vertices.

We will a need the \textit{combinatorial distance} on a graph given
as follows. Each vertex has distance $0$ to itself. The distance
between different vertices   $v$ and $w$ is one if and only if there
exists a label $b$ such that $(v,w,b)$   belongs to $E$. More
generally the distance between different vertices $v$ and $w$ is
then defined inductively as the smallest natural number $n$ such
that there exists a vertex $v'$ with distance $n-1$ to $v$ and
distance one to $w$. If no such $n$ exists the combinatorial
distance is defined to be $\infty$. The graph is called
\textit{connected} if the distance between any two if its vertices
is finite. Likewise the \textit{connected component} of a vertex is
the set of all vertices with finite distance to it.

A \textit{ray} in an infinite graph is an infinite sequence
$v_0,v_1,\ldots$ of pairwise different vertices with distance one
between consecutive vertices. Two rays are equivalent if there
exists a third ray containing  infinitely many vertices of  each of
the rays. An equivalence class of rays is called an \textit{end} of
the graph. For example finite graphs have $0$ ends, the Cayley
graphs of $\ZZ$, $\ZZ^2$ and $\mathbb{F}_2$ (the free group on  two
letters) have $2,1$ and infinitely many ends, respectively. In
general the Cayley graph of a group may have $0,1,2$ or infinitely
many ends.

A \textit{rooted graph} is a pair   consisting of a graph and a
vertex belonging to the vertex set of the graph. This vertex is then
called the \textit{root}.

%The graph  $G_1$ is isomorphic to the graph $G_2$   if there exists a bijective map $\beta$
% from the vertices of $G_1$ to the vertices of $G_2$ such that the vertices $x$ and $y$ in $G_1$ are connected by an edge of color $b$ if and only if
% $\beta (x)$ and $\beta(y)$ are connected by an edge of color $b$. An isomorphism from a graph to itself is called an \textit{automorphism}.

Two rooted graphs $(G_1, v_1)$ and $(G_2,v_2)$ labeled by the same
set $\mathcal{B}$ are called isomorphic if  there exists a bijective
map $\beta$ from the vertices of $G_1$ to the vertices of $G_2$
taking $v_1$ to $v_2$ such that the vertices $x$ and $y$ in $G_1$
are connected by an edge of color $b$ if and only if their images in
$V(G_2)$ are connected by an edge of color $b$. In this case we
write $(G_1,v_1)\cong (G_2,v_2)$.

%Further, \label{brx} for $r \in \NN$, we let $B_G (x,r)$ denote the
%labeled graph with root $x$ which is induced by $G$ when restricting the
%vertex set to the combinatorial $r$-ball around $x$.

Let us now consider the set  $ \mathcal G_* (\mathcal{B})$ of
isomorphism classes of connected rooted graphs labeled with elements
from $\mathcal{B}$ that we endow with the following natural metric.
The distance between the isomorphism classes of the two rooted
graphs $(Y_1,v_1)$ and $(Y_2,v_2)$ is then defined as
$$
\mbox{dist}([(Y_1,v_1)],[(Y_2,v_2)]):=\inf\left\{\frac{1}{r+1} :
B_{Y_1}(v_1,r) \cong B_{Y_2}(v_2,r) \right\}
$$
where $B_Y(v,r)$ is the (labeled) ball of radius $r$ centered in $v$
in the combinatorial metric on $Y$. If we only consider graphs of
uniformly bounded degree (as we will in this paper), the space
$\mathcal G_* (\mathcal{B})$ is compact.

\medskip

We now turn to dynamical systems.  Whenever the group $H$ acts on
the compact space $Y$ via the continuous map $\alpha : H\times
Y\longrightarrow Y$ we call $(Y,H, \alpha)$ a \textit{dynamical
system}. We will mostly suppress the $\alpha$ in the notation. In
particular, we will write $(Y,H)$ instead of $(Y,H,\alpha)$ and we
will write  $t y$ for $\alpha_t (y)$ (with $y\in Y$ and $t\in H$).
If $H$ is the infinite cyclic group $\ZZ$, then the  action of $H$
on $Y$ is determined by $T:=\alpha (1)$ and we then just write
$(Y,T)$ instead of $(Y,\ZZ)$ (and this is well in line with the
notation used in the previous sections).

Whenever  a dynamical system $(Y,H)$ and  $y\in Y$ is given we call
$$\{t y : t\in H\}$$
the \textit{orbit of $y$ (under H)}.

The dynamical system $(Y,H)$ is \textit{minimal} if every orbit is
dense in $Y$. The dynamical system $(Y,H)$ is called
\textit{uniquely ergodic} if there exists  exactly one $H$-invariant
probability measure on $Y$.

The dynamical system $(Y,H)$ is called a \textit{factor} of the
dynamical system $(Y',H)$ if there exist a continuous surjective map
$\chi : Y'\longrightarrow Y$ with $\chi (ty) = t\chi (y)$ for all
$t\in H$ and $y\in Y'$. This map $\chi$ is then called
\textit{factor map}. The dynamical system $(Y,H)$ is then also
referred to as \textit{extension} of the dynamical system $(Y',H)$.

We will deal with graphs arising from dynamical systems. More
specifically, consider a group $H$ generated by a symmetric finite
set $S$ and assume that $H$ acts on the compact space $Y$. Then any
point $y\in Y$ gives rise to the \textit{orbital  Schreier graph of
$y$} denoted by $\varGamma_y$. This is a rooted  graph labeled by
$S$, which is equipped with the involution $S\longrightarrow S,
s\mapsto s^{-1}$. The set of vertices of $\varGamma_y$ is given by
the points in the orbit of $y$. The root is given by $y$ and rhere
is an edge from $v$ to $w$ with label $s\in S$ if $s x = y$. Note
that by the required symmetry of $S$ we then have also an edge from
$w$ to $v$ with label $s^{-1}$ (as is needed according to our
definition of a labeled graph).

As an example we note the Cayley graph of a group $G$ with
generating set $S$. This Cayley graph (with the neutral element of
$G$ as the root)  is the orbital Schreier graph of $G$ corresponding
to the action of $G$ on itself via left multiplication.

If the elements of $S$ happen to all be involutions then there will
be an edge of label  $s$ from $v$ to $w$ if and only if there is an
edge of label $s$ from $w$ to $v$. This is the situation we will
encounter in the next section.

\section{Grigorchuk's group $G$,  its Schreier
graphs and the associated Laplacians}\label{Grigorchuk} In this
section we introduce the main object of our interest: the first
group of intermediate growth  $G$ and the Laplacians on the
associated Schreier graphs.  The group $G$ is  the first group with
intermediate word growth and was introduced by the first author in
\cite{Gri80, Gri84}. By now it is generally known as Grigorchuk's
group $G$ and this is how we will refer to it.\footnote{in spite of
the first author's reluctance} It is generated by four involutions
$a,b,c,d$.
%In fact, any of $b,c,d$ could even be removed from the generating set but we prefer to work with this set of generators.
With notation to be introduced presently, the group $G$ can be
viewed as a group acting by automorphisms on the full infinite
binary tree $\mathcal T$. The action extends by continuity to an
action by homeomorphisms on the boundary $\partial \mathcal T$. The
action of $G$ gives rise to Schreier graphs

The Schreier graphs arising from actions of automorphism groups of
infinite regular rooted trees  have attracted substantial attention
in recent years \cite{GNS,Nek}. Of particular interest are so-called
self-similar groups (as $G$)  whose action reflect the self-similar
structure of the tree. Their Schreier graphs also have
self-similarity features. Some are closely related to Julia sets
\cite{BGN,DDMN,Nek}, others are fractal sets close to
 e.g. the Sierpinski gasket or the Apollonian gasket \cite{BG,GS1,GS2}.
%\cite{KN}

The spectra  of the  Laplacians   on such graphs have been described
in some cases \cite{BG,GS1,GS2,GN,GZ}. The spectrum  can be a union
of intervals \cite{GN}, a Cantor set \cite{BG},  or a union of a
Cantor set together with an infinite set of isolated points that
accumulate to it,  as in the case of the so-called Hanoi tower group
\cite{GS2}.  These investigations all use the method introduced in
\cite{BG}. Here we study the spectra of the Laplacians on the
Schreier graphs by a different method via the connection to
aperiodic order. This allows us to determine the spectral type of the Laplacians for arbitrary choice of weights on the generators of the group. We thus obtain one of the rare examples where we understand how the spectrum of the Laplacian depends on the weights. In general this dependence is very poorly understood, as well as the dependence of the spectrum on the generating set in the group. Let us mention here a recent result of Grabowski and Virag who show in \cite{Gra} that there is a weighted Laplacian on the lamplighter group that has singular continous spectrum, whereas the unweighted Laplacian on the same generators is known to have pure point spectrum \cite{GZ}.

%Its  action gives rise to Schreier graphs. More precisely, there are infinite Schreier graphs associated to the action on the boundary of the tree and finite Schreier graphs associated to the action on the (levels) of the tree.
%The corresponding infinite Schreier graphs form  a dynamical system as has been studied by Vorobets \cite{Vor2}.
%At the same time there is a natural Laplacian associated with each Schreier graph.
%The spectral theory of these Laplacians is the main concern of this paper.

\subsection{Grigorchuk's group G}
\label{Group-G}

Let us denote by $\mathcal T_q$, $q\in \NN$ with $q\geq 2$, the
\textit{rooted  regular tree of degree} $q$. The vertex set of
$\mathcal T_q$ is given by $\{0,\ldots, q-1\}^\ast$, i.e.  the set
of all words over the alphabet $\{0,\ldots, q-1\}$. The root of
$\mathcal T_q$ is the empty word. There is an edge between
 $v$ and  $w$ whenever $w = v  k$  or $v = w k$ holds
for some $k\in\{0,\ldots q-1\}$. The words $w \in\{0,\ldots,
q-1\}^n$ constitute the \textit{$n$-the level} of the tree. (In the
tree, they are at combinatorial distance exactly $n$ from the root.)

%The group $G$ is defined as a group of automorphisms of the \textit{rooted binary tree} $\mathcal T_2$.

%Let $\mathcal T_q$ be the infinite regular rooted tree of degree $q$
%introduced in Section \ref{subsection-graphs}. Thus, the vertices of
%$\mathcal T_q$ is are given by the finite words over the alphabet
%$\{0,1,\ldots, q-1\}$, the root is the empty word, and a vertex $v$
%is connected to all vertices $v k$, $k\in \{0,\ldots, q-1\}$ and if
%$|v|\geq 1$ it is also connected to the unique vertex $w$ with $v =
%w l$ for some $l\in\{0,\ldots,q-1\}$. The  $n$-th level of the tree
%consists of the words of length $n$ over the alphabet $\{0,1,\ldots,
%q-1\}$.

The  boundary  $\partial \mathcal T_q$ of $\mathcal T_q$ consisting
of infinite geodesic rays in $\mathcal T_q$ emanating from the root
(i.e. infinite paths starting in the root all of whose edges are
pairwise different) can then be identified with the  set
$\{0,1,\ldots, q-1\}^\NN$   of one-sided  infinite words over
$\{0,1,\ldots, q-1\}$.  As mentioned above, the set $\{0,1,\ldots,
q-1\}^\NN$ is equipped with the product topology and is thus a
compact space homeomorphic to the Cantor set.
%The basis of open sets is the collection of all cylindrical sets $\{\textrm{$w$\{0,1,\ldots, q-1\}^\NN$;$ $w\in \{0,1,\ldots, q-1\}^\ }\}$. The space $\{0,1,\ldots, q-1\}^\NN$ is totally disconnected and homeomorphic to the Cantor set. The cylindrical sets generate a $\sigma$-algebra of Borel subsets of the space $X^{\omega}$. We shall denote by $\lambda$ the uniform measure on $X^\omega$.\\

%Let $H$ be a group of automorphisms of $\mathcal T_q$.
Any automorphism of $\mathcal T_q$ necessarily preserves the root
(which is the only vertex with degree $q$) and maps paths starting
in the root to paths starting in the root. This readily implies that
any automorphisms group action on $\mathcal T_q$ is level
preserving, i.e. maps words of length $n$ to words of length $n$.
Any such action then extends to an action of the same group by
homeomorphisms on the boundary $\partial \mathcal T_q$.

A regular rooted tree is a self-similar object. Indeed, the subtree
rooted at an arbitrary vertex of the tree is isomorphic to the whole
tree $\mathcal T_q$. The full group of automorphisms inherits this
self-similarity property in the following sense: any automorphism of
$\mathcal T_q$ is completely determined by the permutation it
induces on the $q$ branches growing from the root (an element of
$Sym(q)$) and the collection of $q$ automorphisms
$(g_0,...,g_{q-1})$ which coincide with the restrictions of $g$ on
the corresponding branches.

However, if one is interested in a subgroup $H<Aug(T_q)$ and wants
it to be self-similar, one has to impose the condition that all the
restrictions $(g_0,...,g_{q-1})$ are again elements of the same
group $H$, so that every $g\in H$ can be represented as
\begin{eqnarray*}
g=\alpha(g_0,\ldots,g_{q-1}),
\end{eqnarray*}
where $\alpha$ belongs to $\mbox{Sym}(q)$ and describes the action
of $g$ on the first level of $\mathcal T_q$ and $g_i\in G,
i=0,...,q-1$ is the restriction of $g$ on the full subtree of
$\mathcal T_q$ rooted at the vertex $i$ of the first level of
$\mathcal T_q$. This leads to the following definition.

\begin{Definition}\label{defiselfsimilar}
A group $H$ of automorphisms of $\mathcal T_q$ is {\it self-similar}
if, for all $g\in H, x\in \{0,\ldots, q-1\}$, there exist $h\in H,
y\in \{0,\ldots, q-1\}$ such that
$$
g(xw)=yh(w),
$$
for all finite words $w$ over the alphabet $\{0,\ldots, q-1\}$.
\end{Definition}

We refer the interested reader to \cite{GNS,Nek} for more
information about self-similar groups.

%\begin{Remark}\label{background-self-similar} While we do not need it in the sequel we shortly discuss here some background on self-similarity.
 %Self-similarity implies that $G$ can be embedded into the permutational wreath product $Sym(q)\wr_{\{0,\ldots, q-1\}} G$, so that any automorphism $g\in G$ can be represented as
%\begin{eqnarray*}
%g=\alpha(g_0,\ldots,g_{q-1}),
%\end{eqnarray*}
%where $\alpha$ from the permutation group in $q$ symbols, $Sym(q)$, describes the action of $g$ on the first level of $\mathcal T$ and $g_i\in G, i=0,...,q-1$ is the restriction of $g$ on the full subtree of $\mathcal T$ rooted at the vertex $i$ of the first level of $\mathcal T$ (observe that any such subtree is isomorphic to $\mathcal T$). Hence, if $x\in \{0,\ldots, q-1\}$ and $w$ is a finite word over $\{0,\ldots,q-1\}$, we have
%$$ g(xw)=\alpha(x)g_x(w).$$
%Let us also note that self-similar  groups are also known as {\it automata groups}, see \cite{Nek,GNS} for more information.\\
%\indent
%More precisely, the action of $g\in G$ on
%the $n$-th level can be represented by a permutation matrix of
%size $q$, whose entries are matrices of size $q^{n-1}$. If $g$ is
%as in \eqref{tauleftformula}, the nonzero entries of the matrix
%are at position $(i,\tau(i))$ and correspond to the action of the
%restriction $g_i$ of $g$ on the subtree of depth $n-1$ rooted at
%$i$, for each $i=0,\ldots, q-1$.\\
%\subsection{Plan of the paper}
%\end{Remark}

\medskip

We now turn our attention to one particular example of a
self-similar group that will be the central object of our study, the
Grigorchuk group $G$. It is generated by four automorphisms
$a,b,c,d$ of the rooted binary tree $\mathcal T = \mathcal T_2$ as
follows:
\begin{itemize}
\item[] $  a(0 w) = 1 w$,  $ a (1 w) = 0 w;$
\item[] $  b(0w) = 0 a (w)$, $  b (1 w) = 1 c(w);$
\item[] $  c(0w) = 0 a (w)$,  $c (1 w) = 1 d(w);$
\item[] $  d(0w) = 0 w$,  $  d(1w) = 1 b(w),$
\end{itemize}
for an arbitrary word $w$ over $\{0,1\}$. These automorphisms can
also be expressed in the self-similar form, as above:
$$
a=\epsilon(id,id), \qquad b=e(a,c), \qquad  c=e(a,d), \qquad
d=e(id,b),
$$
where $e$ and $\epsilon$ are, respectively, the trivial and the
non-trivial permutations in the group $\mbox{Sym}(2)$.

\begin{Remark}Observe that all the generators are involutions and that
$\{1,b,c,d\}$ commute and constitute a group isomorphic to   the
Klein group $\ZZ /2 \ZZ  \times \ZZ / 2\ZZ$. Let as also mention
that there are many more relations and the group is not finitely
presented.
\end{Remark}

For our subsequent discussion it will be important that $G$ acts
\textit{transitively} on each level, i.e. for arbitrary words $w,u$
over $\{0,1\}$ with the same length there exists a $g\in G$ with $g
u = w$.

\subsection{The Schreier graphs of $G$ and the dynamical system
$(X,G)$}\label{Section-Schreier}

%In this section we introduce the Schreier graphs of $G$ and discuss results of Vorobets making Schreier graphs into a dynamical system $(X,G)$.

The action of $G$ on the set $V\{0,1\}^\ast$ of vertices of the
rooted binary tree and on its boundary $\partial \mathcal T =
\{0,1\}^\NN$ induces  on this set the structure of a graph labeled
with $\{a,b,c,d\}\subset G$. Specifically, the vertex set of this
graph is given by $\{0,1\}^\ast \cup \{0,1\}^\NN $ and there is an
edge with label $s\in\{a,b,c,d\}$ and  origin  $v$ and terminal
vertex  $w$ if and only if $s v = w$ holds. Note that the set of
arising  edges has indeed the  symmetry property required in our
definition of a labeled graph as any $s\in \{a,b,c,d\}$ is an
involution. For the first three levels of the tree the resulting
graphs are shown in Figure \ref{Schreier-finite}. The connected
components of this graph correspond to the orbits of the group
action. Thus, the  rooted graphs  consisting of such a connected
component together with a root are orbital Schreier graphs (in the
sense defined above).

There are two kind of such connected  components viz finite and
infinite components. The finite ones correspond to the levels of the
tree (recall that the action of $G$ on the levels of the tree is
transitive). We will refer to them simply  as \textit{Schreier
graphs}. The infinite  ones  correspond to the orbits of the action
on the boundary.  The corresponding rooted graphs  will be referred
to as \textit{orbital Schreier graphs}.
\begin{figure}[h!]
\begin{center}
\hspace*{-.5cm}
\includegraphics[scale=0.75]{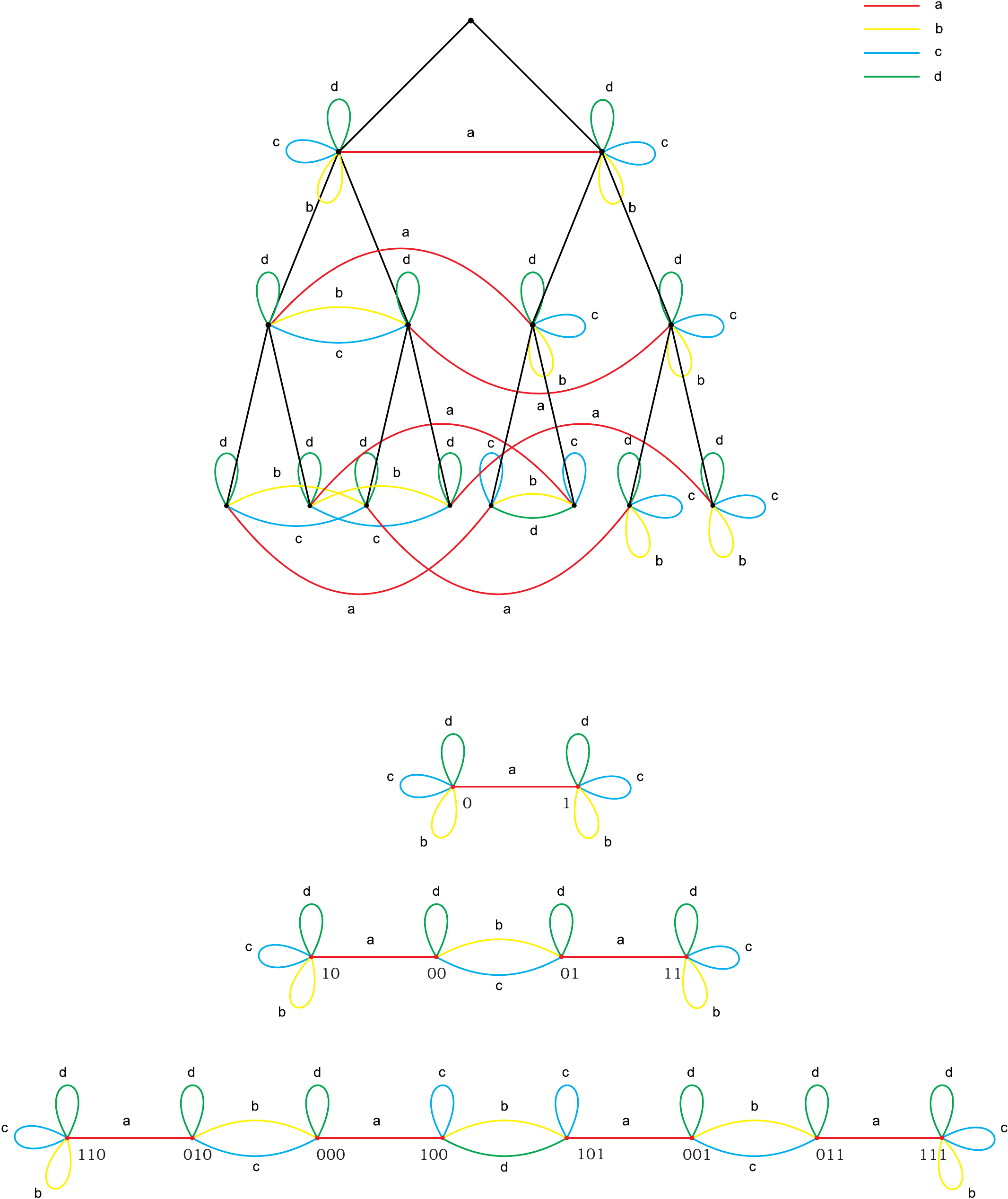}
\caption{The finite Schreier graphs of the first, second and third
level.} \label{Schreier-finite}
\end{center}
\end{figure}

%The connected components of this graph will be of particular
%interest for us. For $v\in V(\mathcal{T})\cup
%\partial \mathcal{T} = \{0,1\}^\ast \cup \{0,1\}^\NN$ we define
%$$\varGamma_v:=\mbox{connected component of $v$}.$$
%These components are known as \textit{Schreier graphs}. In fact, it
%is not hard to see that  $\varGamma_v$ is  nothing but the Schreier
%graphs corresponding to $(G,St_v (G),S)$, where $St_v (G)$ is the
%stabilizer of $v$ under the action of $G$. As we do not need this
%connection, we refrain from further discussion, see \cite{Gri11,Nek}
%for details. For a $w$ of length $n$ the corresponding $\varGamma_w$
%is also called a finite or \textit{$n$-th level  Schreier graph} and
%for an infinite ray $\xi\in\partial T$ the graph $\varGamma_\xi$ is
%known as the {\it orbital Schreier graph} (of $\xi$).

%We will also need the (isomorphism classes of) rooted version of the
%Schreier graphs.

We will need the isomorphism classes of  Schreier graphs and
introduce therefore the map
$$\mathcal{F} :V(\mathcal{T})\cup
\partial \mathcal{T}  \longrightarrow \mathcal G_*
(\{a,b,c,d\}),\;\mathcal F (v):=[ (\varGamma_v, v)],$$ where
$\varGamma_v$ is the connected component of $v$.
%i.e. $\mathcal{F} (v)$ is the isomorphism class of the Graph $\varGamma (v)$ with the root $v$.

%As is well-known (see e.g. Section 2.2 of \cite{DDMN}) finite
%Schreier graphs converge to infinite orbital Schreier graphs, as
%follows.
%\begin{Lemma}\label{F-continuous}
% For any $\xi \in \{0,1\}^\NN$, the sequence
%$(\mathcal F (\xi_1\ldots \xi_n))$ converges to  $\mathcal F (\xi)$
%in the space of isomorphism classes of rooted graphs labeled with
%$\{a,b,c,d\}$ (equipped with the topology discussed in Section
%\ref{subsection-graphs}).
%\end{Lemma}

The graphs $\varGamma_w$ and $\varGamma_v$ coincide (as non-rooted
graphs) whenever $v$ and $w$ are in the same orbit  of the action of
$G$. In particular, as $G$ acts transitively on each level of the
tree,  for $n\in\NN$ we can therefore define
$$\varGamma_n:= \varGamma_{1^n}$$
which coincides with $\varGamma_w$ for all $w\in V(\mathcal T)$ with
$|w|=n$. In general, the  graph $\Gamma_n$ has  a linear shape; it
has $2^{n-1}$ simple edges, all labeled by $a$, and $2^{n-1}-1$
cycles of length 2 whose edges are labeled by $b,c,d$. It is regular
of degree 4, with one loop at each edge. The loop contributes 1 to
the degree of the vertex because all generators are elements of
order 2, and the labeling of the loop is uniquely determined by the
labeling of the other edges around the vertex, as edges around one
vertex are labeled by $\{a,b,c,d\}$.

%We will carry out a rather careful study of the Schreier graphs
%$\varGamma_v$, $v \in V(\mathcal T) \cup \partial \mathcal{T}$, in
%later parts of the paper. Here, we already include a short summary
%of some parts of   Vorobets' study of the  (topology of the) set
%$\mathcal F(\partial \mathcal T)\subset\mathcal G_* (\{a,b,c,d\})$
%\cite{Vor2}.

The  graphs $\varGamma_\xi$, $\xi \in \partial \mathcal T$
corresponding to the action of $G$  on the boundary are infinite and
have either two ends or one end. The graph $\Gamma_{1^\infty}$
corresponding to the orbit of the rightmost infinite ray, is
one-ended (see Figure \ref{One-ended}), and so are then  all graphs
in the same orbit.

\begin{figure}[ht]
\begin{center}
\hspace*{-.5cm}
\includegraphics[scale=0.75]{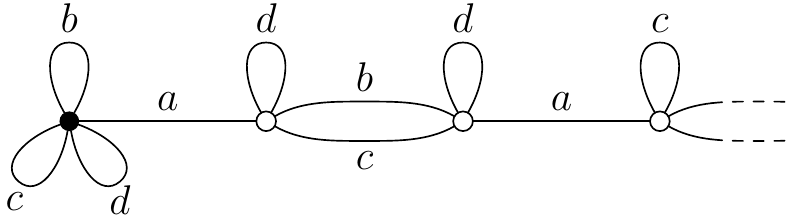} \caption{The one-ended graph $\varGamma_{1^\infty}$.} \label{One-ended}
\end{center}
\end{figure}

All  the other  graphs $\Gamma_\xi, \xi\notin G\cdot 1^\infty$, are
two-ended. They are all isomorphic as unlabeled graphs.

In \cite{Vor2}, Vorobets studied the closure $\overline{\mathcal
F(\partial \mathcal T)}$ of the space of Schreier graphs in the
space of isomorphism classes of rooted labeled graphs. We recall
some of his results next. He showed that the one-ended graphs are
exactly the isolated points of this closure $\overline{\mathcal
F(\partial \mathcal T)}$, and that the other points in
$\overline{\mathcal F(\partial \mathcal T)}$ are two ended graphs.
This suggests to consider the compact set
$$X :=
\overline{\mathcal F(\partial \mathcal T)} \setminus
\{\text{isolated points}\}.$$ The  group $G$ acts on  $X$ by
changing the root of the graph. The arising dynamical system is
denoted as $(X,G)$.  It  is minimal and uniquely ergodic and its
invariant probability measure  will be denoted as $\nu$.

A precise description of $X$  can be given as follows. The space $X$
consists of all isomorphism classes of  two-ended rooted Schreier
graphs $\{(\Gamma_\xi,\xi)  : \xi\in\partial \mathcal T \setminus
G\cdot 1^\infty\}$ and of  three additional countable families of
isomorphism classes  of two-ended graphs. These families  are
obtained by gluing two copies of the one-ended graph $\Gamma_\xi,
\xi\in G\cdot 1^\infty$,  at the root in three possible ways
corresponding to choosing a pair $(b,c)$, $(b,d)$ or $(c,d)$ and
then choosing an arbitrary vertex of the arising graph as the root.
One of these three possibilities is shown in Figure
\ref{Connecting-two-copies}. There, the chosen pair is $(c,d)$ and
to avoid confusion with other edges with the same labels,  labels at
the gluing point are denoted with a prime.

% These three new graphs are again Schreier graphs of $G$.
% but they correspond to stabilizers of small neighborhoods of points in $G\cdot 1^\infty$.

\begin{figure}[ht]
\begin{center}
\hspace*{-.5cm}
\includegraphics[scale=.75]{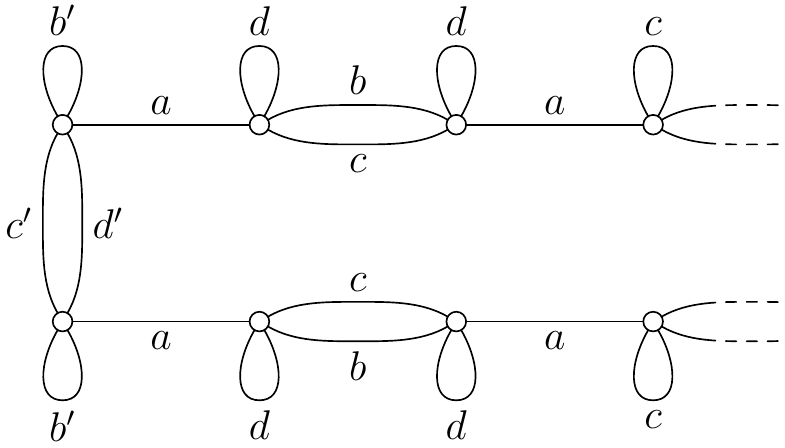}
\caption{Connecting two copies of $\Gamma_{1^\infty}$}
\label{Connecting-two-copies}
\end{center}
\end{figure}

The decomposition of $X$ into isomorphism  classes of the
$(\varGamma_\xi,\xi)$ and the three families mentioned above gives
immediately  rise to a factor map
$$ \phi: X \rightarrow
\partial \mathcal T,$$
which is one-to-one except in a countable set of points, where it is
three-to-one. In fact, the inverse map $\phi^{-1}$ exists on the
complement $\partial \mathcal T\setminus G\cdot 1^\infty $ of the
orbit of the point $1^\infty$ and agrees there with  $\mathcal F$.

Under this factor map $\phi$ the unique $G$-invariant probability
measure $\nu$ on $X$ is mapped to the  uniform Bernoulli measure
$\mu$ on $\partial \mathcal T = \{0,1\}^\NN$.
% then dynamical systems $(G,
%X,\nu)$ and $(G, \partial \mathcal T, \mu)$ are isomorphic.  Note
%that the orbit $G \cdot 1^\infty$ coincides with the cofinality
%class of $1^\infty$, i.e., it consists exactly of the sequences that
%become constant 1 starting from some point.

\subsection{Laplacians associated to the Schreier graphs of
$G$}\label{Operators}
%In our situation the group $G$ acts on several measure spaces. Each of these actions gives rise to
%a  representation of $G$ and its group ring as operators
%on a Hilbert space. This is discussed in this section.

%\medskip

Whenever a group $H$ acts on a measure space $(Y,m)$ by measure
preserving transformations, one
 obtains the \textit{Koopman   representation} $\varrho$  of $H$
on $L^2 (Y,m)$ via
$$\varrho (h) : L^2 (Y,m)\longrightarrow L^2 (Y,m), (\varrho(h) f)(y) = f(h^{-1} y),$$
(for $h\in H$). Any $\varrho (h)$ is then a unitary operator (as the
action is measure preserving).

In our situation when $H=G$,
%acting on the space $X$ with the unique $G$-invariant measure $\nu$,
we have moreover that for $s\in \{a,b,c,d\}$ the unitary operator
$\varrho (s)$ is its own inverse (as $s$ is an involution) and hence
must be selfadjoint. In particular, for any set of parameters
$t,u,v,w\in\RR$ we obtain a selfadjoint operator
$$M_\varrho (t,u,v,w) := t \varrho (a) + u \varrho (b) + v \varrho (c) + w \varrho (d).$$

\smallskip

Consider first an arbitrary  $\xi \in \partial \mathcal T$.  Then,
there is an action of $G$ on the (countable) vertex set of
$V(\varGamma_\xi)$ of $\varGamma_\xi$. Specifically, $s\in
\{a,b,c,d\}$ maps the vertex $x\in V (\varGamma_\xi)$ to the  vertex
$s x$, which is the unique vertex of $V (\varGamma_\xi)$ connected
to $v$ by an edge of label $s$. Clearly, this  action  preserves the
counting measure on $V(\varGamma_\xi)$. Thus, we obtain a
representation $\varrho_\xi$ of $G$ on $\ell^2 (V(\varGamma_\xi))$.
% Note that the set of vertices of $\varGamma_\xi$ is countable (as $G$ is countable) and hence the arising $\ell^2$ space just consists of square summable function on $V (\varGamma_\xi)$.
\begin{Definition}[Laplacian of the Schreier graph]  An operator $M_\xi
(t,u,v,w) $ defined by
$$M_\xi (t,u,v,w) := M_{\varrho_\xi} (t,u,v,w) =  t \varrho_\xi (a)  + u  \varrho_\xi (b)  + v \varrho_\xi (c) + w \varrho_\xi (d)$$
with $\xi \in \partial \mathcal T$ and $t,u,v,w\in\RR$ will be
called \textit{(weighted)  Laplacian of the Schreier graph}
$\varGamma_\xi$.
\end{Definition}

\begin{Remark}
\begin{itemize}

\item It is possible to understand $V(\varGamma_\xi)$ as  $G / G_\xi$, where $G_\xi$ is the stabilizer of $\xi$ in the action of $G$ on $\partial \mathcal T$, and then
$\varrho_\xi$ is the quasi-regular representation $\varrho_{G /
G_\xi}$   associated to $G / G_\xi$.

\item If $t,u,v,w$ are positive with $1 = t + u + v + w$ then it is
possible to interpret the operators $M_\xi$ as the Markov operators
of a random walk on the graph $\varGamma_\xi$. In the general case,
the operator $M_\xi (t,u,v,w)$ can still be seen as the natural
weighted  'adjacency matrix'  or 'Laplacian' associated to the the
graph $\varGamma_\xi$.
\end{itemize}
\end{Remark}

\smallskip

We can also equip $\partial \mathcal T =\{0,1\}^\NN$ with the
uniform Bernoulli measure $\mu$ and consider the Koopman
representation $\pi$ of $G$ on $L^2 (\partial \mathcal T,\mu)$ given
via
$$\pi (g) : L^2 (\partial \mathcal T,\mu) \longrightarrow L^2 (\partial \mathcal T,\mu),
\pi (g) f (x) = f(g^{-1} x).$$ This is a unitary representation of
$G$ and  any $\pi (s)$, $s\in \{a,b,c,c\}$, is a unitary selfadjoint
involution. For $t,u,v,w\in \RR$ we then obtain  the operator $M_\pi
(t,u,v,w) $ via
$$M_\pi (t,u,v,w) = t \pi (a) + u \pi (b)  + v \pi (c) + w \pi (d).$$

\smallskip

The following is a crucial result on the spectral theory of the
above operators.

\begin{Theorem}[Independence of spectrum (Bartholdi / Grigorchuk \cite{BG})] \label{Constancy-BG} For any given set of parameters $t,u,v,w\in\RR$ the spectrum
of $M_\xi (t,u,v,w)$ does not depend on $\xi \in \partial \mathcal
T$ and coincides with the spectrum of $M_\pi (t,u,v,w)$.
\end{Theorem}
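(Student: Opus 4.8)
The plan is to relate the operators $M_\xi(t,u,v,w)$ acting on $\ell^2(V(\varGamma_\xi))$ and the operator $M_\pi(t,u,v,w)$ acting on $L^2(\partial\mathcal T,\mu)$ by exhibiting each one, spectrally, as a member of a common coherent family over the minimal uniquely ergodic system $(X,G)$ introduced in Section~\ref{Section-Schreier}. Concretely, for each point $\mathcal X\in X$ (i.e.\ each isomorphism class of rooted two-ended Schreier graph) one has the quasi-regular representation $\varrho_{\mathcal X}$ of $G$ on $\ell^2(V(\varGamma_{\mathcal X}))$ and hence the operator $M_{\mathcal X}(t,u,v,w)=t\varrho_{\mathcal X}(a)+u\varrho_{\mathcal X}(b)+v\varrho_{\mathcal X}(c)+w\varrho_{\mathcal X}(d)$. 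First I would record the standard fact that this assignment $\mathcal X\mapsto M_{\mathcal X}$ is \emph{continuous} in the strong resolvent sense with respect to the topology on $X$: two rooted Schreier graphs that agree on a large combinatorial ball around the root give operators whose matrix coefficients $\langle\delta_u,(M_{\mathcal X}-z)^{-1}\delta_v\rangle$ are close for $u,v$ near the root, because the labelled local structure determines the finite-range operator locally and resolvents depend continuously on such local data (a Combes--Thomas / Neumann-series estimate uniform in $\mathcal X$ since all graphs have degree $4$ and the coefficients are fixed).

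Granted this continuity, the second step is the classical argument that for a minimal dynamical system a strongly-continuous covariant family of bounded self-adjoint operators has $\omega$-independent spectrum. Covariance here is the statement $\varrho_{g\mathcal X}=V_g\,\varrho_{\mathcal X}\,V_g^{-1}$ for the natural unitary $V_g:\ell^2(V(\varGamma_{\mathcal X}))\to\ell^2(V(\varGamma_{g\mathcal X}))$ induced by changing the root, whence $\sigma(M_{\mathcal X})=\sigma(M_{g\mathcal X})$ for all $g\in G$; combining this $G$-invariance of the spectrum with strong-resolvent continuity and density of orbits (minimality of $(X,G)$) and the fact that $\sigma(\cdot)$ is upper semicontinuous under strong resolvent convergence while the reverse inclusion follows along a dense orbit, one gets a fixed closed set $\Sigma'\subset\RR$ with $\sigma(M_{\mathcal X})=\Sigma'$ for every $\mathcal X\in X$. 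This is the same mechanism as Theorem~\ref{Constancy}, only with the acting group $G$ in place of $\ZZ$; amenability of $G$ is not needed for constancy of the spectrum, merely minimality.

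The third step is to identify the two operators named in the statement as special fibres of this family, or at least as spectrally equal to such fibres. For $M_\xi(t,u,v,w)$ with $\xi\in\partial\mathcal T$ this is immediate when $\xi\notin G\cdot 1^\infty$, since then $(\varGamma_\xi,\xi)$ is literally a point of $X$; for $\xi$ in the orbit of $1^\infty$ one uses that the one-ended graph is a strong-resolvent limit of two-ended graphs in $X$ (it sits in the closure $\overline{\mathcal F(\partial\mathcal T)}$, the two-ended graphs accumulate to it), so $\sigma(M_\xi)\subset\Sigma'$, and the reverse inclusion again follows by approximating any $\mathcal X\in X$ by translates that converge to a one-ended graph, using that the relevant balls around the root eventually coincide. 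For $M_\pi(t,u,v,w)$ I would use the factor map $\phi:X\to\partial\mathcal T$ of Section~\ref{Section-Schreier}, which pushes the $G$-invariant measure $\nu$ on $X$ to the Bernoulli measure $\mu$ and is one-to-one $\mu$-almost everywhere: this gives a measurable $G$-equivariant identification exhibiting the Koopman representation $\pi$ on $L^2(\partial\mathcal T,\mu)$ as (unitarily equivalent to, or a direct integral built from) the fibres $\varrho_{\mathcal X}$, so that $\sigma(M_\pi)=\Sigma'$ as well, e.g.\ by writing $M_\pi$ as the direct integral $\int_X^{\oplus} M_{\mathcal X}\,d\nu$ over the uniquely ergodic base and invoking that the spectrum of such a direct integral over a minimal system equals the common fibre spectrum.

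\textbf{Main obstacle.} The delicate point is not the abstract constancy-of-spectrum machinery but the careful handling of the orbit $G\cdot 1^\infty$: the factor map $\phi$ is genuinely three-to-one there, the one-ended graphs are isolated in $\overline{\mathcal F(\partial\mathcal T)}$ (which is why $X$ is defined by \emph{removing} them), and one must check that excising a $\nu$-null, $\mu$-null set does not change any of the three spectra --- i.e.\ that $\sigma(M_\xi)$ for $\xi$ in this exceptional orbit, and the contribution of this orbit to $M_\pi$, are both already contained in $\Sigma'$. I expect this to be dispatched by the approximation remarks above (one-ended graphs as strong-resolvent limits of two-ended ones, and conversely), but it is the step that requires genuine care rather than citation.
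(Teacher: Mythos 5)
Your first two steps (covariance plus minimality of $(X,G)$, yielding one fixed closed set $\Sigma'$ which is the spectrum of every two-ended fibre, hence of $M_\xi(t,u,v,w)$ for every $\xi\notin G\cdot 1^\infty$) are sound and can be made rigorous; inside this paper the same conclusion also follows more quickly from Proposition \ref{unitary-equivalence} combined with Theorem \ref{Constancy}. Note, however, that the paper does not prove Theorem \ref{Constancy-BG} at all: it is quoted from \cite{BG}, where the argument is representation-theoretic (weak equivalence of the quasi-regular representations $\varrho_\xi$ with each other and with the Koopman representation $\pi$, via the decomposition of $\pi$ into the finite-level representations and amenability-type arguments). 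The two places where your sketch departs from that route are exactly the places where it has genuine gaps.

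First, the orbit $G\cdot 1^\infty$. Your approximation premise contradicts the paper: Vorobets' result quoted in Section \ref{Section-Schreier} says the one-ended rooted graphs are precisely the \emph{isolated} points of $\overline{\mathcal F(\partial\mathcal T)}$, so the two-ended graphs do not accumulate to them (the end vertex carries three loops $b,c,d$, a local configuration occurring in no two-ended graph of the closure), and there is no natural strong-resolvent approximation of the half-line operator $M_{1^\infty}$ by the line operators, since near the end vertex the matrix elements differ by a coupling that does not tend to zero. What has to be excluded for these $\xi$ is spectrum outside $\Sigma'$ created by the boundary vertex; decoupling/finite-rank arguments only give equality of essential spectra, and local convergence gives only $\Sigma'\subseteq\sigma(M_\xi)$, not the reverse inclusion. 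In \cite{BG} this is handled by weak-containment (amenability) arguments, not by approximation. Second, $M_\pi$. The proposed identification of $\pi$ with $\int_X^{\oplus}\varrho_{\mathcal X}\,d\nu$ is not available: as a unitary equivalence it is false, since $\pi$ contains the trivial representation (constant functions) while a direct integral of quasi-regular representations on infinite orbits has no nonzero invariant vectors; and the weaker assertion that the Koopman operator has the same spectrum as the a.e.-constant fibre spectrum is essentially the statement to be proved, so invoking it is circular. The route actually used in \cite{BG} is the decomposition $\pi\simeq\bigoplus_n\pi_n$ (cited in the remark after Theorem \ref{ids-equal-KvNS-trace}), which gives $\sigma(M_\pi)=\overline{\bigcup_n\sigma(M_n(t,u,v,w))}$, followed by a comparison of the finite Schreier graphs with the orbital ones; both this step and the $1^\infty$-orbit step require representation-theoretic input that your proposal replaces by approximation claims that do not hold.
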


Of course, there is the  question what the spectrum is in terms of
the parameters $t,u,v,w$.  A complete answer was given in \cite{BG}
in the case $ u = v = w$. The spectrum then consists of two points
or one or two intervals, and an explicit description of the spectrum
can be given in terms of the parameters$ t$ and  $u = v = w$. In
fact, the case $u = v = w$ is the case of periodic Schr\"odinger
type operators and can easily be treated by classical means (Floquet
decomposition). It can also be treated by the  method suggested in
\cite{BG}. For related results on the Kesten-von-Neummann-Serre
trace we refer to \cite{GK}.
%This method  also works in various non-periodic situation (see discussion in the introduction).
Below we will come to a main result of \cite{GLN} which  treats  the
case, where $u = v = w$ does not hold. We will see that in this
case, the spectrum is a Cantor set of Lebesgue measure zero.

\section{The connection between  $(X,G)$ and
$(\varOmega_\tau,T)$}\label{Connecting}

We will now provide a map $\graph$  from $\varOmega_\tau$ to
(isomorphism classes of) labeled rooted graphs with labels belonging
to the alphabet $\{a,b,c,d\}$. In order to get a better
understanding of this map it will be useful to think of the letters
$x,y,z$ as encoding the pairs
$$\left(\begin{matrix} b \\ c \end{matrix}\right) ; \ \  \left(\begin{matrix} b \\ d
\end{matrix}\right) ; \  \left(\begin{matrix} c \\ d \end{matrix}\right)$$
respectively. Roughly speaking the map will convert
$\omega\in\varOmega_\tau$ into a graph with vertex set $\ZZ$ and
root $1$  and edges between $n$ and $n+1$  with labels from
$\{a,b,c,d\}$ according to the value of $\omega (n)$.  We now
provide the  precise \textbf{definition of the labeled rooted graph
$\graph(\omega)$} associated to $\omega \in\varOmega_\tau$  as
follows:

\textit{Vertices:} The set of vertices is $\ZZ$.

\textit{Root:} The number $1$  is chosen as the root.

\textit{Edges:} There are edges
between vertices $n,k$ if an only if $|n-k|\leq 1$. Specifically, edges
are assigned between $n$ and $n+1$ and
from $n$ to itself and from $n+1$ to  itself  in the following way:
\begin{itemize}
\item If $\omega (n) = a$, then there is an edge between $n$ and
$n+1$ and this edge carries the label $a$.

\item If $\omega (n) = x$, then there are two edges between $n$ and
$n+1$; one carries the label $b$ and the other carries the label
$c$. Moreover, there is an additional edge from $n$ to itself
labeled with $d$ and an additional edge from $n+1$ to itself labeled with  $d$.

\item If $\omega (n) =y$, then there are two edges between $n$ and
$n+1$; one carries the label $b$ and the other carries the label
$d$. Moreover, there is an additional edge from $n$ to itself
labeled with $c$ and an additional edge from $n+1$ to itself labeled with  $c$.

\item If $\omega (n) =z$, then there are two edges between $n$ and
$n+1$; one carries the label $c$ and the other carries the label
$d$. Moreover, there is an additional edge from $n$ to itself
labeled with $b$ and an additional edge from $n+1$ to itself labeled with  $b$.

\end{itemize}
We note that in this way any vertex has  for each label
$\{a,b,c,d\}$ exactly one edge of this color emanating from it. We
also note that the arising graphs have a 'linear structure' in a
natural sense.

\smallskip

Let   $ \mathcal G_* (\{a,b,c,d\})$ be the metric space of
isomorphism classes of connected rooted graphs labeled with elements
from $\{a,b,c,d\}$.  Then, $\graph$ gives rise to a map $\Graph$
 via
$$\Graph : \varOmega_\tau \longrightarrow \mathcal G_*(\{a,b,c,d\}), \omega \mapsto [\graph (\omega)],$$
where $[(\varGamma,v)]$ is the equivalence  class of $(\varGamma,v)$
in the space of  isomorphism classes of the rooted connected labeled
graphs.

Recall that $X$ denotes the closure of $\mathcal{F} (\mathcal{T})$
in $\mathcal G_* (\{a,b,c,d\})$  without its isolated points (see
Section \ref{Section-Schreier}). Then, it turns out that the image
of $\varOmega_\tau$ under $\Graph$ is exactly $X$. In fact, much
more is true and this is  our main result on the connection of the
subshift $(\varOmega_\tau,T)$ and the dynamical system $(X,G)$.

Define the maps $A,B,C,D$ from $\varOmega_\tau$ into itself by
\begin{itemize}
\item  $A (\omega) = ...\omega_0 \omega_1 | \omega_2 ...$ if $\omega_1 =a$ and $ A (\omega) = ....\omega_{-1}| \omega_0 \omega_1...$ if $\omega_0 =a$.
\item $ B (\omega) = ...\omega_0 \omega_1 |\omega_2...$ if $\omega_1 \in\{x,y\}$, $B  (\omega) = ....\omega_{-1} |\omega_0 \omega_1...$ if $\omega_0 \in \{x,y\}$ and $B (\omega) = \omega$ in all other cases.
\item $C(\omega)  =... \omega_0 \omega_1 |\omega_2...$ if $\omega_1 \in\{x,z\}$, $C (\omega) = ....\omega_{-1} |\omega_0 \omega_1...$ if $\omega_0 \in \{x,z\}$ and $C(\omega) = \omega$ in all other cases.
\item $D  (\omega) = ...\omega_0 \omega_1 |\omega_2...$ if $\omega_1 \in\{y,z\}$, $D (\omega) = ....\omega_{-1} |\omega_0 \omega_1...$ if $\omega_0 \in \{y,z\}$ and $D (\omega) = \omega$ in all other cases.
\end{itemize}
Then, clearly, $A,B,C,D$ are homeomorphisms and involutions. Let $H$
be the group generated by $A,B,C,D$ (within  the group of
homeomorphisms of $\varOmega_\tau$).

\begin{Theorem}[Factor theorem \cite{GLN}]
\label{Main-connection} The following statements hold:

(a) The group $G$ is isomorphic to the group $H$ via  $\varrho :
G\longrightarrow H$ with  $\varrho(a) =A$, $\varrho(b) = B$,
$\varrho(c) =C$ and $\varrho (d) =  D$.   In particular, there is a
well defined action $\alpha$ of $G$ on $\varOmega_\tau$ given by
$\alpha_g (\omega) :=\varrho (g) (\omega)$ for $g\in G$ and $\omega
\in \varOmega_\tau$ and via this action we obtain a dynamical system
$(\varOmega_\tau,G)$.

(b) The dynamical system $(X,G)$ is a factor of the dynamical system
$(\varOmega_\tau,G)$   with factor map
$$\psi : \varOmega_\tau \longrightarrow X, \omega \mapsto \Graph
(\omega),$$ which is two-to-one.

(c) For every $\omega \in \varOmega_\tau$ the orbits $\{T^n \omega :
n\in \ZZ\}$ and $\{\alpha_g (\omega) : g\in G\}$ coincide.

(d) The dynamical system $(\varOmega_\tau,G)$ is uniquely ergodic
and  the unique $T$-invariant probability measure on
$\varOmega_\tau$ coincides with the unique  $G$-invariant
probability measure on $\varOmega_\tau$.
\end{Theorem}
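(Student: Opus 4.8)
Everything is driven by one elementary compatibility: for $\omega\in\varOmega_\tau$ and $g\in\{a,b,c,d\}$,
$$\graph\big(\varrho(g)\omega\big)\cong g\cdot\graph(\omega),$$
where $g\cdot(\varGamma,v):=(\varGamma,gv)$ is change of root. This is read off the definitions: $\graph(\omega)$ is the bi-infinite linear $\{a,b,c,d\}$-labelled graph with vertex set $\ZZ$ rooted at $1$ and pattern at $n$ dictated by $\omega(n)$, while $A,B,C,D$ are by construction "move the root across the $a$-, $b$-, $c$-, $d$-edge (or stay put when that edge is a loop)" followed by re-indexing. Hence $\Graph$ intertwines the generators of $H$ with those of $G$ acting on rooted graphs, and I would deduce (a), (c), (b), (d) in that order.

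\textbf{(a).} The subtle point is that $\varrho$ is well defined. Refine $\graph$ by also marking, on the (always two-ended) graph $\graph(\omega)$, the end running towards $+\infty$, obtaining $\graph^{+}(\omega)$ in the compact space of \emph{oriented} rooted labelled graphs; $G$ still acts there by change of root (a finite root change leaves the marked end unchanged), the intertwining persists, and now $\graph^{+}$ is injective, since the only automorphism of a labelled bi-infinite path fixing the root and one end is the identity. Therefore a word $w$ in $a,b,c,d$ trivial in $G$ satisfies $\graph^{+}\circ\varrho(w)=\graph^{+}$, so $\varrho(w)=\mathrm{id}$ on $\varOmega_\tau$; thus $\varrho\colon G\to H$ is a well-defined epimorphism. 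It is injective because $G$ acts faithfully on $X$ (the map $\phi$ of Section \ref{Section-Schreier} is $G$-equivariant onto $\partial\mathcal T$, where $G$ is faithful) and, by (b), $\Graph$ is a $G$-equivariant surjection onto $X$.

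\textbf{(c) and (b).} From the formulas, each generator sends $\omega$ to $T\omega$, $T^{-1}\omega$ or $\omega$ according to $\omega_{0},\omega_{1}$, whence $\{\alpha_g\omega:g\in G\}\subseteq\{T^{n}\omega:n\in\ZZ\}$; conversely, inspecting the cases $\omega_{1},\omega_{0}\in\{a,x,y,z\}$ shows $T\omega$ and $T^{-1}\omega$ lie in $\{A\omega,B\omega,C\omega,D\omega\}$, so the orbits coincide and $(\varOmega_\tau,G)$ is minimal because $(\varOmega_\tau,T)$ is. For (b): $\Graph$ is continuous ($B_{\graph(\omega)}(1,r)$ depends only on $\omega_{-r}\ldots\omega_{r+1}$) and $G$-equivariant; since $\graph(\omega^{(x)})$ — the gluing of two copies of $\varGamma_{1^{\infty}}$ along a $(b,c)$-pair — is a point of $X$, equivariance, continuity and minimality give $\Graph(\varOmega_\tau)=\overline{\Graph(G\omega^{(x)})}\subseteq X$, and a nonempty closed $G$-invariant subset of the minimal system $(X,G)$ is all of $X$. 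Finally $\Graph$ is exactly two-to-one: the fibre over $\Graph(\omega)$ is $\{\graph^{+}(\omega),\graph^{+}(R\omega)\}$ with $R$ the reflection $R\omega(n)=\omega(1-n)$; here $R\omega\in\varOmega_\tau$ by reflection-invariance of $\mbox{Sub}_\tau$ (Proposition \ref{Prop-palindrome}), and $R\omega\neq\omega$ because no letter is ever doubled in $\mbox{Sub}_\tau$ (in $\eta$ the occurrences of $a$ are exactly the odd positions, so any two consecutive letters of a factor contain exactly one $a$). Thus $(X,G)$ is a two-to-one factor of $(\varOmega_\tau,G)$.

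\textbf{(d).} Let $\lambda$ be the unique $T$-invariant probability measure (Theorem \ref{Theorem-tau-linear-repetitive}). Each generator equals $\mathrm{id}$, $T$ or $T^{-1}$ on each atom of the clopen partition of $\varOmega_\tau$ by $(\omega_{0},\omega_{1})$, hence lies in the topological full group of $(\varOmega_\tau,T)$, and such homeomorphisms preserve $\lambda$ (change of variables over that partition); so $\lambda$ is $G$-invariant. For uniqueness let $\mu$ be any $G$-invariant probability measure and fix a word $v\in\mbox{Sub}_\tau$; write $[v]_{k}=\{\omega:\omega_{k}\ldots\omega_{k+|v|-1}=v\}$, so $T[v]_{k}=[v]_{k-1}$. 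Using that $\{\omega_{0}=a\}\sqcup\{\omega_{1}=a\}=\varOmega_\tau$, invariance of $\mu$ under $A$ applied to $[v]_{k}$ gives $\mu([v]_{k-1}\cap\{\omega_{0}=a\})+\mu([v]_{k+1}\cap\{\omega_{1}=a\})=\mu([v]_{k})$; the analogous identities for $B,C,D$ (using $B=T^{\pm1}$ on $\{\omega_{1}\in\{x,y\}\}$, $\{\omega_{0}\in\{x,y\}\}$ and $B=\mathrm{id}$ elsewhere, and similarly for $C,D$), summed over $B,C,D$ and simplified with the same partition fact, collapse to
$$\mu([v]_{k-1})+\mu([v]_{k+1})=2\,\mu([v]_{k})\qquad(k\in\ZZ).$$
Hence $k\mapsto\mu([v]_{k})$ is affine and bounded, so constant; as cylinders generate, $\mu$ is $T$-invariant, and therefore $\mu=\lambda$ by unique ergodicity of $(\varOmega_\tau,T)$. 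This gives unique ergodicity of $(\varOmega_\tau,G)$ with the same invariant measure as $(\varOmega_\tau,T)$.

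\textbf{Main obstacle.} The content-bearing steps are the well-definedness in (a) and the identification of $\Graph(\varOmega_\tau)$ in (b): both require making precise the dictionary between bi-infinite words in $\varOmega_\tau$ and the linear Schreier graphs of $G$ (effectively recovering Vorobets' description of $X$ from the substitution $\tau$), and it is the oriented refinement $\graph^{+}$ that makes (a) go through cleanly. By contrast (c) and the invariance half of (d) are formal consequences of the intertwining, and the uniqueness half of (d) — which genuinely uses the explicit shape of $A,B,C,D$, since orbit equivalence alone would not force a $G$-invariant measure to be $T$-invariant — is just the short harmonicity computation above.
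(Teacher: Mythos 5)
Your parts (c) and (d) are correct: the observation that each of $A,B,C,D$ acts on every $\omega$ as $T$, $T^{-1}$ or the identity according to $(\omega_0,\omega_1)$ gives the orbit coincidence, and your harmonicity computation (summing the $A$-identity with the identities for $B,C,D$ to get $\mu([v]_{k-1})+\mu([v]_{k+1})=2\mu([v]_k)$, hence constancy in $k$ and $T$-invariance of any $H$-invariant measure) is a clean, self-contained proof of (d). The fibre computation in (b) is also fine: an isomorphism of the linear rooted labelled graphs must be the identity or the reflection about the root, so the fibre of $\Graph$ over $\Graph(\omega)$ is $\{\omega, R\omega\}$ with $R\omega(n)=\omega(1-n)$, and $R\omega\in\varOmega_\tau$, $R\omega\neq\omega$ as you argue.

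However, there is a genuine gap at the centre, which you have in effect labelled ``Main obstacle'' rather than closed. Two assertions are used without proof: (i) that $\graph(\omega^{(x)})$ is isomorphic to the gluing of two copies of $\varGamma_{1^\infty}$ along a $(b,c)$-pair, hence a point of $X$; and (ii) in (a), that ``$G$ still acts by change of root'' on the oriented graphs $\graph^{+}(\omega)$, so that a word $w$ trivial in $G$ moves the root trivially. Assertion (ii) is not automatic: change of root on a labelled graph in which every vertex carries one edge of each label only defines an action of the free product $\ZZ/2\ast\ZZ/2\ast\ZZ/2\ast\ZZ/2$; it factors through $G$ precisely when the graph is a Schreier graph of $G$, and that is exactly what must be proved for the graphs $\graph(\omega)$, $\omega\in\varOmega_\tau$ (it fails for graphs coded by arbitrary alternating words over $\{a,x,y,z\}$, so it genuinely uses that $\varOmega_\tau$ comes from the substitution $\tau$). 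Both (i) and (ii) are instances of the same missing dictionary: one has to show, for instance by comparing the recursion $p^{(n+1)}=p^{(n)}s_np^{(n)}$ with the inductive structure of the finite Schreier graphs $\varGamma_n$ and with Vorobets' description of $\varGamma_{1^\infty}$ and of $X$, that the labelled linear graphs produced by $\graph$ from elements of $\varOmega_\tau$ are exactly (limits of) Schreier graphs of $G$, with the pattern of pairs along $\varGamma_{1^\infty}$ given by the fixed point $\eta$. This identification is the substance of the Factor Theorem; without it, (a) only produces a homomorphism of the free product into the homeomorphism group of $\varOmega_\tau$, and the inclusion $\Graph(\varOmega_\tau)\subseteq X$ in (b) is unsupported. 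Note also the resulting circularity in your write-up: injectivity of $\varrho$ in (a) appeals to (b), while (b) rests on the unproved item (i) and the well-definedness half of (a) rests on the unproved item (ii). Supplying this combinatorial correspondence is what the proof in \cite{GLN} actually does, and it is the part your proposal still has to carry out.
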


The theorem provides a factor map $\psi :
(\varOmega_\tau,G)\longrightarrow (X,G)$. In Section
\ref{Section-Schreier} we have already  encountered the factor map
$\phi : (X,G)\longrightarrow (\partial \mathcal{T}, G)$. Putting
together these factor maps  provide a tower of extensions of the
dynamical system $(\partial \mathcal{T}, G)$ via
$$(\varOmega_\tau,G)\stackrel{\psi}{\longrightarrow} (X,G)
\stackrel{\phi}{\longrightarrow}(\partial \mathcal{T}, G).$$ The
remainder of this section is devoted to providing additional
perspective on these two factor maps by discussing their respective
merits. We will see that $\phi$ resolves the non-typical points and
that $\psi$ allows one  to embed the group $G$ into a topological full group.

We will need a bit of notation first. Let  $(Y,H)$ be a dynamical
system. Let $g\in H$ be given. Then, an $y\in Y$ is called
\textit{$g$-typical} if either $g y \neq y$ or $g$ acts trivially in
some neighborhood of $y$. If $y\in Y$ is $g$-typical for any $g\in
H$ it is called \textit{typical}. Typical points have many claims to
be indeed typical. For instance, if $H$ is countable the set of
typical points has a meager complement \cite{Gri11}. Moreover, the
following is shown in \cite{GNS,Gri11}.

\begin{Proposition} Let $(Y,H)$ be a minimal dynamical system and $H$
finitely generated. Then, the Schreier graphs of typical points are
locally isomorphic (as labeled graphs). Moreover, if $x\in Y$ is
typical and $y\in Y$ is not typical, then  any ball around a vertex
of $\varGamma_x$ is isomorphic to a ball around some vertex in
$\varGamma_y$.
\end{Proposition}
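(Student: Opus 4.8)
The plan is to reduce everything to a single combinatorial encoding and then let minimality do the work. First I would fix a finite symmetric generating set $S$ of $H$ and, for $m\in\NN\cup\{0\}$, write $S^{\leq m}$ for the set of words over $S$ of length at most $m$, read also as elements of $H$. The basic observation I would make is that the rooted labeled ball $B_{\varGamma_p}(p,r)$ of radius $r$ around the root of the orbital Schreier graph of a point $p\in Y$ is completely determined by the finite \emph{coincidence pattern}
$$\mathcal P_r(p):=\big\{\,(u,v)\in S^{\leq 2r+1}\times S^{\leq 2r+1}\ :\ up=vp\,\big\}.$$
Indeed, the vertices of the ball are the points $up$ with $u\in S^{\leq r}$, two such words representing the same vertex exactly when their pair lies in $\mathcal P_r(p)$, the $s$-labeled edge out of $up$ reaches $(su)p$, and loops and internal edges are read off from $\mathcal P_r(p)$ in the same way. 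Consequently $\mathcal P_r(p)=\mathcal P_r(q)$ forces $up\mapsto uq$ to be a well-defined isomorphism of rooted labeled graphs $B_{\varGamma_p}(p,r)\cong B_{\varGamma_q}(q,r)$, so the whole task becomes: realise a prescribed pattern at points lying in the orbit of the target point.

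Next I would isolate the role of typicality: at a typical point the pattern $\mathcal P_r(\cdot)$ is constant on a neighbourhood. Fixing $r$ and a pair $(u,v)\in S^{\leq 2r+1}\times S^{\leq 2r+1}$, put $g=v^{-1}u$, so that $\{q:uq=vq\}=\{q:gq=q\}$. If $gp\neq p$ this set is closed and avoids $p$, so its complement is an open neighbourhood of $p$ on which $uq\neq vq$; if $gp=p$ then, $p$ being $g$-typical, $g$ acts trivially on a neighbourhood of $p$, on which $uq=vq$. Intersecting these finitely many neighbourhoods gives an open $O_p\ni p$ on which $\mathcal P_r$ is identically $\mathcal P_r(p)$. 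I would also check that typicality is orbit-invariant: if $p$ is typical and $q=kp$ with $k\in H$, then $gq=q\iff(k^{-1}gk)p=p$, and $(k^{-1}gk)$-typicality of $p$ together with $k$ being a homeomorphism transports the dichotomy ``$gq\neq q$ or $g$ acts trivially near $q$'' from $p$ to $q$.

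With this in hand the first assertion is quick. Every vertex of $\varGamma_x$ is of the form $hx$ ($h\in H$), which is again typical, and $B_{\varGamma_x}(hx,r)$ is the ball of radius $r$ around the root of $\varGamma_{hx}$, hence is governed by $\mathcal P_r(hx)$. By the previous step there is an open neighbourhood $O$ of $hx$ on which $\mathcal P_r$ is constant; minimality makes the orbit of the second typical point $x'$ dense, so it meets $O$ at some $h'x'$, and then $B_{\varGamma_x}(hx,r)\cong B_{\varGamma_{x'}}(h'x',r)$. Thus every ball of $\varGamma_x$ occurs in $\varGamma_{x'}$; by the symmetry of the hypotheses the converse holds too, giving the claimed local isomorphism.

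For the second assertion I would run the same argument one-sidedly, now not using non-typicality of $y$ at all: a ball of $\varGamma_x$ is $B_{\varGamma_x}(hx,r)$ around the typical point $hx$, there is an open neighbourhood $O\ni hx$ on which $\mathcal P_r$ is constant, and by minimality the orbit of $y$ meets $O$ at some $wy$, whence $B_{\varGamma_x}(hx,r)\cong B_{\varGamma_y}(wy,r)$. (One should not expect the reverse inclusion: a ball of $\varGamma_y$ centred at a non-typical vertex carries a coincidence that nearby points do not, and such a degenerate pattern need not occur anywhere in $\varGamma_x$ — which is why only one direction is asserted.) The step I expect to be the real crux is the first one — pinning down that the ball is faithfully encoded by its coincidence pattern, so that ``isomorphic balls'' becomes ``equal patterns'' — since after that the only genuine idea is the observation that typicality of $p$ is exactly what makes each condition ``$gq=q$'' locally constant near $p$, and everything else is minimality.
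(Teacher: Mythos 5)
Your argument is correct, and it is worth noting that the paper itself does not prove this proposition at all: it is quoted with a reference to \cite{GNS,Gri11}, so there is no in-paper proof to compare against. Your route is the natural self-contained one, and all the key steps check out: the rooted ball $B_{\varGamma_p}(p,r)$ is indeed faithfully encoded by the coincidence pattern $\mathcal P_r(p)$ (words of length $r+1$ already suffice for the edge relations, so taking $S^{\leq 2r+1}$ is harmless overkill); the reduction of each condition $up=vq$ to a fixed-point condition $gq=q$ with $g=v^{-1}u$, combined with closedness of fixed-point sets (which uses that $Y$ is Hausdorff, as it is throughout the paper) and the definition of $g$-typicality, correctly yields local constancy of $\mathcal P_r$ near a typical point; your verification that typicality is orbit-invariant via conjugation and the homeomorphism $k$ is exactly what is needed to apply this at an arbitrary vertex $hx$ of $\varGamma_x$; and minimality (density of every orbit) then transports the pattern to the orbit of the second point. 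You also correctly observe that the second assertion uses only typicality of $x$ and density of the orbit of $y$, never the non-typicality of $y$, which explains the one-sided formulation in the statement. The only cosmetic remark is that the ``ball'' should be fixed once and for all as the induced rooted labeled subgraph on vertices at distance at most $r$; with that convention your map $up\mapsto uq$ is well defined, bijective and label-preserving exactly as you argue.
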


In order to get a further understanding of the typical points we
will next discuss another characterization of typical points. Define
the \textit{stabilizer} of $y$ as
$$\mbox{st}_H (y) :=\{ h\in H : h y = y\}.$$
and the \textit{neighborhood stabilizer} of $y$ as
$$\mbox{st}_H^0 :=\{ g\in H : \mbox{ $h$ acts as identity on a
neighborhood of $y$}\}.$$ Then, $\mbox{st}_H^0$ is a normal subgroup
of $\mbox{st}_H$ and the quotient
$$\mbox{germ} (y):=\mbox{st}_H (y) /\mbox{st}_H^0 $$
is called the \textit{group of germs} (at $y$).

\begin{Lemma} Let $(Y,H)$ be a dynamical system. Then, the following
assertions are equivalent for $y\in Y$.
\begin{itemize}
\item[(i)] The point $y$ is  typical.
\item[(ii)] The group of germs $\mbox{germ} (y)$ is trivial.
\end{itemize}
\end{Lemma}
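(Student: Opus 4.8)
The plan is to prove the equivalence by directly unwinding the two definitions; no auxiliary constructions are needed. The key elementary observation, used in both directions, is that $\mbox{st}_H^0 (y) \subseteq \mbox{st}_H (y)$ automatically: if $g$ acts as the identity on a neighborhood of $y$, then in particular $g y = y$. Consequently the group of germs $\mbox{germ} (y) = \mbox{st}_H (y)/\mbox{st}_H^0 (y)$ is trivial precisely when $\mbox{st}_H (y) = \mbox{st}_H^0 (y)$, i.e. when every element of $H$ fixing $y$ in fact acts as the identity on some whole neighborhood of $y$.

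For the implication (i) $\Rightarrow$ (ii): assume $y$ is typical and pick any $h \in \mbox{st}_H (y)$, so that $h y = y$. Since $y$ is $h$-typical and the first alternative $h y \neq y$ in the definition of $h$-typicality is excluded, $h$ must act trivially on some neighborhood of $y$; that is, $h \in \mbox{st}_H^0 (y)$. Hence $\mbox{st}_H (y) \subseteq \mbox{st}_H^0 (y)$, and combined with the reverse inclusion noted above we obtain $\mbox{st}_H (y) = \mbox{st}_H^0 (y)$, so $\mbox{germ} (y)$ is trivial.

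For the implication (ii) $\Rightarrow$ (i): assume $\mbox{germ} (y)$ is trivial, equivalently $\mbox{st}_H (y) = \mbox{st}_H^0 (y)$, and let $g \in H$ be arbitrary. If $g y \neq y$, then $y$ is $g$-typical by the first clause of the definition. If $g y = y$, then $g \in \mbox{st}_H (y) = \mbox{st}_H^0 (y)$, so $g$ acts as the identity on a neighborhood of $y$, and $y$ is again $g$-typical. As $g$ was arbitrary, $y$ is typical.

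The argument is entirely formal, so there is no genuine obstacle; the only point requiring a moment's care is confirming the inclusion $\mbox{st}_H^0 (y) \subseteq \mbox{st}_H (y)$, which guarantees that the quotient in the definition of the group of germs is meaningful, and this is immediate from the definitions.
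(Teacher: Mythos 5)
Your proof is correct and is essentially the paper's argument: both reduce the statement to the observation that $y$ is typical exactly when $\mbox{st}_H(y)=\mbox{st}_H^0(y)$, which is precisely triviality of $\mbox{germ}(y)$; the paper merely phrases the two implications contrapositively while you argue directly. No gap.
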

\begin{proof}(i)$\Longrightarrow$ (ii): If $\mbox{germ}(y)$ is not
trivial, there exits a $g\in\mbox{st}_H \setminus\mbox{st}_H^0$.
Hence, $y$ is not typical.

\smallskip

(ii)$\Longrightarrow$ (i): If $y$ is not typical then there exists a
$g\in G$ with $g y = y$ but $g$ is not the identity on a
neighborhood of $y$. Thus, $\mbox{germ} (y)$ is not trivial (as it
contains the class of $g$).
\end{proof}

\begin{Remark} The points $y$ with non - trivial $\mbox{germ}(y)$
are sometimes called \textit{singular}.
\end{Remark}

We will also need the concept, going back to \cite{GPS},  of the
(topological) \textit{full group} $[[T]]$ of a homeomorphism $T :
Y\longrightarrow Y$ of the Cantor set $Y$. This is the group of
those homeomorphisms of $Y$ that at any point coincide locally with
powers of T.  It is a   countable group with remarkable properties.
For example it is amenable if $T$ is minimal \cite{JM}, its
commutator subgroup is simple  \cite{Matui} and it is finitely
generated if and only if $T$ is a minimal subshift over a finite
alphabet \cite{Matui}.

After these preparations we can now came back to the situation at
hand.

The dynamical system $(\partial \mathcal{T}, G)$  is minimal and
uniquely ergodic. However, it does have non-typical points. In fact,
a point in $\partial \mathcal{T}$ is non-typical if and only if it
belongs to  $G\cdot 1^\infty$ (see \cite{Vor2}).

The dynamical system  $(X,G)$   is minimal and uniquely ergodic (as
discussed in \cite{Vor1}). Moreover, all its points are typical (as
can easily be seen). Thus, from this point of view  a key merit of
$(X,G)$ is to provide an extension of of $(\partial \mathcal{T}, G)$
which resolves the non-typical points.

The dynamical system $(\varOmega_\tau,G)$  is minimal and uniquely
ergodic and all its points are typical (as can easily be seen).
Moreover, as shown in the previous theorem, the dynamical system
$(\varOmega_\tau,G)$ has the additional feature that there exists a
homeomorphism $T$ on $\varOmega_\tau$ such that the orbits of $T$
agree with the orbits of $G$. This in turn can  be seen to imply
that $G$ is a subgroup of the topological full group $[[ T]]$ of $T$. In fact,
$G$  embeds  into the topological full group of $(\Omega_\tau,T)$,
as  the action   of generators   $a,b,c,d$ on $\Omega_\tau$ can  be
represented locally  as the  action by $T^{\pm 1}$ and $T^0 = id$,
so  $G$ embeds  into $[[T]]$.

\begin{Remark} \label{Connection-Bon}
In this context we also mention a recent article of Matte Bon
\cite{MBo} showing that the group $G$ (and other groups of
intermediate growth introduced by the first author in \cite{Gri80})
embed into the topological full group $[[\phi]]$ of a minimal
subshift $\phi$ over a finite alphabet. While his approach is
different from ours  it leads to the the same
subshift for the group $G$.
\end{Remark}

We finish this section with the question whether there exists a
minimal homeomorphism $S$ on $X$ such that $G$ is a subgroup of
$[[S]]$.

\section{Spectral theory of the $M_\xi$ for  $\xi \in \partial \mathcal{T}$
}\label{Section-Spectral} In this section  we discuss spectral
theory of the operators $M_\xi (t,u,v,w)$  (defined in Section
\ref{Operators}) for arbitrary values of the parameters $t,u,v,w$.
We do so by using the results of the previous section to  transfer
the problem of spectral theory of the operators $M_\xi (t,u,v,w)$,
$\xi \in
\partial \mathcal T$, to the field of the spectral theory of
discrete Schr\"odinger operators with aperiodic order,
$(H_\omega)_{\omega\in\varOmega_\tau}$.

\bigskip

We consider the subshift $(\varOmega_\tau,T)$. In order to define
the Schr\"odinger operators  we will define specific functions $f,g$
on $\varOmega_\tau$ depending on four real parameters $t,u,v,w$.
Given these parameters we set

$$D := u + v + w$$
and define
$$ f: \varOmega_\tau\longrightarrow \RR\;\mbox{by}\;\: f(\omega) :=
\left\{ \begin{array}{ccc}  t  &:& \omega_0 = a \\   D - w &:&
\omega_0 = x\\ D - v &:& \omega_0 = y\\ D- u & : &  \omega_0 =
z\end{array} \right.$$ and
$$g : \varOmega_\tau \longrightarrow \RR\;\: \mbox{by}\;\: g(\omega) :=
\left\{ \begin{array}{c}  w : \omega_0 \omega_1 \in\{ ax, xa\} \\ v
:
\omega_0 \omega_1 \in \{ay,ya\}\\
u : \omega_0 \omega_1 \in  \{az,za\}
\end{array} \right..$$

For a given $(t,u,v,w)\in \RR^4$ and these  $f,g$  we let
$H_\omega$, $\omega \in\varOmega_\tau$, be the associated operators.

\begin{Proposition}\cite{GLN}\label{unitary-equivalence}
Let $(t,u,v,w) \in \RR^4$ be given. Let $\xi \in \partial \mathcal T
\setminus  G \cdot 1^\infty$ be arbitrary. Then, there exists an
$\omega$ in $\varOmega_\tau$  such that $H_\omega$ is unitarily
equivalent to $M_\xi (t,u,v,w)$.
\end{Proposition}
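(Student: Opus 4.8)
The plan is to realise $H_{\omega}$ and $M_{\xi}(t,u,v,w)$ as the weighted adjacency operator of one and the same labelled graph, so that unitary equivalence reduces to a change of orthonormal basis. First I would produce a suitable $\omega$. By Theorem~\ref{Main-connection}(b) the map $\psi=\Graph$ sends $\varOmega_{\tau}$ onto $X$. For $\xi\in\partial\mathcal T\setminus G\cdot 1^{\infty}$ the rooted labelled graph $(\varGamma_{\xi},\xi)$ is two-ended, hence its isomorphism class $\mathcal F(\xi)$ is not among the isolated points of $\overline{\mathcal F(\partial\mathcal T)}$ (these being exactly the one-ended graphs) and therefore lies in $X$; this is also explicit in Vorobets' description of $X$ recalled in Section~\ref{Section-Schreier}. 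So I would choose $\omega\in\varOmega_{\tau}$ with $\Graph(\omega)=\mathcal F(\xi)$, i.e.\ a label-preserving graph isomorphism $\beta$ from $\graph(\omega)$ (whose vertex set is $\ZZ$) onto $\varGamma_{\xi}$.

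Next I would promote $\beta$ to a unitary. Since $\beta$ is a bijection of vertex sets, $U_{\beta}:\ell^{2}(\ZZ)\to\ell^{2}(V(\varGamma_{\xi}))$, $U_{\beta}\delta_{n}:=\delta_{\beta(n)}$, is unitary, and because $\beta$ preserves labels and incidences it satisfies $\beta(s\cdot n)=s\cdot\beta(n)$ for every generator $s\in\{a,b,c,d\}$ and every $n$. Hence $U_{\beta}$ intertwines the permutation operator $A_{s}$ of the $s$-labelled involution of $\graph(\omega)$ on $\ell^{2}(\ZZ)$ with $\varrho_{\xi}(s)$ on $\ell^{2}(V(\varGamma_{\xi}))$, so that $U_{\beta}^{-1}M_{\xi}(t,u,v,w)U_{\beta}=tA_{a}+uA_{b}+vA_{c}+wA_{d}=:\widetilde M$. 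It therefore suffices to identify $\widetilde M$ with some $H_{\omega'}$.

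This last identification is the computational core. Every element of $\varOmega_{\tau}$ carries the letter $a$ at every other position, so for each $n\in\ZZ$ exactly one of the two bonds of $\graph(\omega)$ incident to $n$ is an $a$-bond and the other a ``double bond'' of type $x$, $y$ or $z$. Reading off the four defining cases of $\graph(\omega)$ then shows that $\widetilde M$ is tridiagonal, with hopping coefficient on the bond $\{n,n+1\}$ equal to $t$ if $\omega_{n}=a$ and to $u+v=D-w$, $u+w=D-v$, $v+w=D-u$ if $\omega_{n}$ is $x$, $y$, $z$ respectively (the $x$-bond carries a $b$-edge of weight $u$ and a $c$-edge of weight $v$, since $x$ encodes $\binom{b}{c}$, and similarly for $y,z$), and with diagonal entry at $n$ equal to the weight of the loop of the double bond incident to $n$, namely $w$, $v$ or $u$ according to whether that bond is of type $x$, $y$ or $z$. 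Comparing with the functions $f,g$ of Section~\ref{Section-Spectral} one recognises exactly the coefficients $f(T^{\,\cdot}\omega)$ and $g(T^{\,\cdot}\omega)$, up to a uniform shift of the index by one position; since such a shift is implemented by the (unitary) bilateral shift on $\ell^{2}(\ZZ)$, and since $\psi$ is onto and $\ZZ$-equivariant so that $\omega$ may anyway be taken within its $T$-orbit, one gets $\widetilde M=H_{\omega'}$ for a suitable $\omega'\in\varOmega_{\tau}$. Composing with $U_{\beta}$ then gives that $H_{\omega'}$ is unitarily equivalent to $M_{\xi}(t,u,v,w)$, as required.

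The conceptual steps — producing $\omega$ from surjectivity of $\psi$, and turning a labelled graph isomorphism into a unitary equivalence of the associated weighted Laplacians — are immediate. I expect the only real difficulty to be the bookkeeping in the last step: keeping straight which bond is incident to which vertex, the one-position shift between ``$\omega_{n}$ labels the bond $\{n,n+1\}$'' and ``$f(T^{n+1}\omega)$ is the $u(n+1)$-coefficient in row $n$ of $H_{\omega}$'', and the encoding conventions $x,y,z\leftrightarrow\binom{b}{c},\binom{b}{d},\binom{c}{d}$ together with the resulting loop weights.
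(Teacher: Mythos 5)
Your proposal is correct and follows essentially the route the paper's framework is built for (and that \cite{GLN} uses): pull $[(\varGamma_\xi,\xi)]\in X$ back through the surjective factor map $\Graph$ of Theorem \ref{Main-connection}(b), turn the labelled-graph isomorphism into a unitary intertwining the generator involutions, and check from the definition of $\graph(\omega)$ that the resulting operator $tA_a+uA_b+vA_c+wA_d$ has exactly the coefficients $f,g$ of Section \ref{Section-Spectral}. Your bookkeeping is right, including the one-position discrepancy, which is resolved exactly by taking $\omega'=T^{-1}\omega\in\varOmega_\tau$ since the subshift is shift-invariant.
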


As a consequence of the previous proposition we can translate
results on the spectral theory of Schr\"odinger operators associated
to $(\varOmega_\tau,T)$ into results on the $M_\xi$. Recall that the
spectrum of the $M_\xi (t,u,v,w)$ does not depend on $\xi\in
\partial \mathcal T$ (due to Theorem \ref{Constancy-BG}). Define
$$\mathcal{P}:=\{(t,u,v,w) \in \RR^4 : t\neq 0, u+ v \neq 0, u + w
\neq 0, v + w \neq 0\}.$$

\begin{Theorem}[Intervals vs Cantor spectrum for the $M_\xi$ \cite{GLN}] \label{Main-Laplacian-Cantor}
 Let $(t,u,v,w)\in\mathcal{P}$ be given
and let $\Sigma = \Sigma (t,u,v,w)$ be the spectrum of the
associated family of Laplacians $M_\xi (t,u,v,w)$, $\xi \in \partial
\mathcal T \setminus G \cdot 1^\infty$. Then, the following holds:

(a) If $u =v = w$ holds then $\Sigma$ consists of one or two closed
non-trivial intervals and all spectral measures are absolutely
continuous.

(b) If $u = v = w$ does not hold then $\Sigma$ is a Cantor set of
Lebesgue measure zero and no spectral measure is absolutely
continuous.
\end{Theorem}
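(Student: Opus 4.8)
The plan is to move the problem over to the Schr\"odinger operators $(H_\omega)_{\omega\in\varOmega_\tau}$ attached to the $f,g$ just defined and then to quote the abstract theorems of Section~\ref{Schroedinger}. First I would use Theorem~\ref{Constancy-BG} (the spectrum of $M_\xi$ is $\xi$-independent) together with Proposition~\ref{unitary-equivalence} (some $H_\omega$ is unitarily equivalent to $M_\xi$) and Theorem~\ref{Constancy} (the spectrum of the family $(H_\omega)$ is $\omega$-independent, as $(\varOmega_\tau,T)$ is minimal) to identify $\Sigma$ with $\sigma(H_\omega)$ for every $\omega$; since unitary equivalence preserves absolute continuity of spectral measures, it suffices to prove (a) and (b) for $(H_\omega)$. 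I would then record the three things the abstract theorems need: $(t,u,v,w)\in\mathcal{P}$ is exactly the assertion that $t,\ u+v,\ u+w,\ v+w$ are all non-zero, i.e.\ that $f$ never vanishes; $f$ and $g$ are locally constant (each depends only on the window $\omega_0\omega_1$); and $(\varOmega_\tau,T)$ is linearly repetitive by Theorem~\ref{Theorem-tau-linear-repetitive}. With that in place the whole statement reduces to deciding whether the potential $(f,g)$ is periodic.

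For part (a), when $u=v=w$ one has $D-u=D-v=D-w=2u$, so $f$ takes the value $t$ exactly at the positions where $\omega$ reads $a$ and the value $2u$ elsewhere; because in the $0$-decomposition of any $\omega\in\varOmega_\tau$ the letter $a$ and the letters of $\{x,y,z\}$ strictly alternate, $f\circ T^{2}=f$, and $g\equiv u$. Hence $(f,g)$ is periodic of period $2$, so Theorem~\ref{Theorem-periodic-abstract} applies and gives that $\Sigma$ is a union of at most two non-degenerate closed intervals with all spectral measures absolutely continuous; non-emptiness of the spectrum then forces $\Sigma$ to be exactly one or two intervals.

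For part (b) I would show that if $u=v=w$ fails then $(f,g)$ is non-periodic; Theorem~\ref{Theorem-aperiodic-abstract} and Corollary~\ref{Corollary-aperiodic-abstract} then deliver the Cantor set of Lebesgue measure zero and the absence of absolutely continuous spectral measures. To get non-periodicity I would look at the values of $\Phi:=(f,g)$ on the six admissible windows $ax,ay,az,xa,ya,za$ and check by direct computation that, whenever $u,v,w$ are not all equal, at least one letter $s\in\{x,y,z\}$ has the property that the $\Phi$-values on its windows $as,sa$ occur on no window not containing $s$. Then $\{\,n\in\ZZ:\Phi(T^{n}\omega)\text{ equals one of those values}\,\}$ equals $Z_s\cup(Z_s-1)$, where $Z_s$ is the set of positions at which $\omega$ reads $s$; these two pieces have opposite parities, so periodicity of the potential would force $Z_s$ to be a periodic subset of $\ZZ$. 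But by the Proposition describing the positions of $a,x,y,z$ in $\eta$, $Z_s$ is, up to a shift, of the form $\{\,2^{3n+j}m:\ n\geq 0,\ m\text{ odd}\,\}$ for some $j\in\{1,2,3\}$, and a short argument with the $2$-adic valuation shows no such set is periodic --- a contradiction.

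I expect the last step to be the real obstacle. The reduction in the first paragraph and the computation for the isotropic case are essentially bookkeeping, whereas proving non-periodicity of $(f,g)$ in the anisotropic regime genuinely needs the case distinction according to which (if any) of $u,v,w$ coincide: in the degenerate sub-cases $\Phi$ no longer separates all three of $x,y,z$, and one has to fall back on the arithmetic (Toeplitz) structure of $\eta$ recorded in that proposition.
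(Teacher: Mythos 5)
Your proposal is correct and follows essentially the same route as the paper: transfer the problem to the operators $(H_\omega)$ via Proposition \ref{unitary-equivalence} and constancy of the spectrum, use linear repetitivity (Theorem \ref{Theorem-tau-linear-repetitive}), and apply Theorem \ref{Theorem-periodic-abstract} in the isotropic case and Theorem \ref{Theorem-aperiodic-abstract} with Corollary \ref{Corollary-aperiodic-abstract} in the anisotropic case; the only step the paper leaves implicit (deferring to \cite{GLN}) is the periodicity/non-periodicity check for $(f,g)$, which you supply explicitly. Your check is sound; just anchor the final $2$-adic argument at a concrete element such as $\omega^{(x)}$, whose restriction to $\NN$ equals $\eta$, so that the proposition on the positions of $x,y,z$ applies verbatim and periodicity of $Z_s$ in $\ZZ$ contradicts the fact that the set of positions with $2$-adic valuation in a fixed congruence class mod $3$ is not even eventually periodic.
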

\begin{proof} Note that
$(\varOmega_\tau,T)$ is linearly repetitive due to Theorem
\ref{Theorem-tau-linear-repetitive}.  Now, the  theorem is a direct
consequence of the preceding proposition and Theorem
\ref{Theorem-periodic-abstract} and Theorem
\ref{Theorem-aperiodic-abstract}.
\end{proof}

\begin{Remark} The case $u = v = w$ has already be treated in \cite{BG} and an
explicit description of the spectrum (in terms of the value of $u$)
has been given there.
\end{Remark}

We can also use the above considerations to translate results  on
absence of  eigenvalues from the $(H_\omega)$ to the $M_\xi$. Recall
that $\mu$ denotes the uniform Bernoulli measure on $\partial
\mathcal T = \{0,1\}^\NN$.

\begin{Theorem}[Absence of eigenvalues \cite{GLN}] \label{Main-Laplacian-Absence-eigenvalues} Let $(t,u,v,w)\in\mathcal{P}$ be given
and assume that $u = v = w$ does not hold.

(a) For $\mu$-almost every $\xi \in \partial \mathcal T$ the
operator $M_\xi$ does not have eigenvalues.

(b) For any $x\in \phi^{-1} (G 1^\infty)$ the  operator $M_x$ does
not have eigenvalues.
\end{Theorem}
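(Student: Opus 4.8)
The plan is to transport the question to the family of Schr\"odinger operators $(H_{\omega})_{\omega\in\varOmega_\tau}$ and there run a Gordon-type argument whose combinatorial fuel is the occurrence of cubes in $\mbox{Sub}_\tau$ recorded in Lemma \ref{one-three-plus-block}. Since $(t,u,v,w)\in\mathcal{P}$, the off-diagonal function $f$ takes only the nonzero values $t,\,u+v,\,u+w,\,v+w$, so the transfer matrices of $H_{\omega}$ are well defined and have unit determinant over any stretch on which $(f,g)$ is periodic; by the classical Gordon lemma, $H_{\omega}$ then has no eigenvalue whenever $(f,g)$ is, for a sequence of scales tending to infinity, three-periodic on a block of length three times the period that is suitably anchored at the origin. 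The relevant periodic blocks are produced by applying $\tau^{m}$ to the word $axaxaxa$ of Lemma \ref{one-three-plus-block} and using the recursion (RF): $\tau^{m}(axaxaxa)=(p^{(m)}s_{m})^{3}p^{(m)}$, so the cube $(p^{(m)}s_{m})^{3}$ of a word of length $2^{m+1}$ lies in $\mbox{Sub}_\tau$ for every $m$; by Theorem \ref{Theorem-tau-linear-repetitive} it occurs with bounded gaps in every element of $\varOmega_\tau$.

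Part (b) follows from this with no probabilistic input. By Theorem \ref{Main-connection}(c) and Theorem \ref{Main-max-equicontinuous-factor}, the preimage of the orbit $G\cdot 1^{\infty}$ under the tower $(\varOmega_\tau,G)\xrightarrow{\ \psi\ }(X,G)\xrightarrow{\ \phi\ }(\partial\mathcal{T},G)$ is exactly the union of the $T$-orbits of $\omega^{(x)},\omega^{(y)},\omega^{(z)}$; and for $x\in\phi^{-1}(G\cdot 1^{\infty})$ the operator $M_{x}$ is, by the computation underlying Proposition \ref{unitary-equivalence} (which depends only on the labelled graph $\graph(\omega)$), unitarily equivalent to $H_{\omega}$ for some $\omega$ in such an orbit, hence to $H_{\omega^{(s)}}$ for some $s\in\{x,y,z\}$. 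Since $\omega^{(s)}=\ldots p^{(n)}\,s\,|\,p^{(n)}\ldots$ for all $n$, using (RF) at level $n+1$ one reads off that whenever $s_{n}=s$ --- i.e.\ for $n$ in a fixed residue class modulo $3$ --- the pair $(f,g)$ is $2^{n+1}$-periodic on a block of length $3\cdot 2^{n+1}$ around the origin with the origin sitting at a copy boundary. Letting $n\to\infty$ along that progression, Gordon's lemma gives that $H_{\omega^{(s)}}$, and therefore $M_{x}$, has no eigenvalue.

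For part (a) I would first discard the $\mu$-null set $G\cdot 1^{\infty}$ and apply Proposition \ref{unitary-equivalence}: for $\xi\in\partial\mathcal{T}\setminus G\cdot 1^{\infty}$ the operator $M_{\xi}$ is unitarily equivalent to $H_{\omega}$ for each of the two (reflection-related) elements $\omega$ of the fibre $(\phi\circ\psi)^{-1}(\xi)$. It then suffices to show that $N:=\{\omega\in\varOmega_\tau : H_{\omega}\text{ has an eigenvalue}\}$ is $\lambda$-null and to push this forward. The push-forward is bookkeeping: $\psi$ and $\phi$ are factor maps, so $(\phi\circ\psi)_{\ast}\lambda=\mu$ (using unique ergodicity of $(\varOmega_\tau,G)$ from Theorem \ref{Main-connection}(d) and $\phi_{\ast}\nu=\mu$ from Section \ref{Section-Schreier}); the set $N$ is invariant under the shift and under reflection (the latter because $H_{\omega}$ is unitarily conjugate to its reflection and $g$ is symmetric in the pair $\omega_{0}\omega_{1}$), and the fibres of $\phi\circ\psi$ over $\partial\mathcal{T}\setminus G\cdot 1^{\infty}$ are single reflection pairs, so $(\phi\circ\psi)^{-1}\big((\phi\circ\psi)(N)\big)=N$ off $G\cdot 1^{\infty}$ and hence $\mu\big((\phi\circ\psi)(N)\big)=\lambda(N)=0$. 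That $\lambda(N)=0$ is the Gordon argument of \cite{GLN}: for $\lambda$-a.e.\ $\omega$ the cubes $(p^{(m)}s_{m})^{3}$ can, at infinitely many scales $m$, be located close enough to the origin to defeat the $\ell^{2}$-decay of a putative eigenfunction, the required positioning being drawn from the frequency estimates of linear repetitivity together with the description of occurrences of the $p^{(n)}$-blocks via the $n$-partitions of Section \ref{Section-n-partition} and the absence of fourth powers (which controls overlaps).

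The crux is precisely this last point. In part (b) the long periodic block sits rigidly around the origin by construction; for a generic $\omega$ the cube at scale $2^{m}$ occurs at an $m$-dependent location, and one must show that at infinitely many scales it lands where Gordon's lemma still bites --- the delicate step carried out in \cite{GLN}. Everything else above --- the unitary equivalences, the identification of $\phi^{-1}(G\cdot 1^{\infty})$, the explicit periodicity of $(f,g)$ near the origin for $\omega^{(s)}$, and the transfer of null sets through the factor maps --- is routine.
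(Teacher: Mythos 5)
The survey does not actually prove this theorem: it is imported from \cite{GLN}, and the only methodological hint in the present text is the remark after Lemma \ref{one-three-plus-block} that the cube of $ax$ feeds a Gordon argument. Your route is exactly that one (transfer by Proposition \ref{unitary-equivalence}, cubes $\tau^{m}(axaxaxa)=(p^{(m)}s_{m})^{3}p^{(m)}$ from (RF), Gordon's lemma), so in terms of approach you coincide with what the paper indicates. Your part (b) is essentially a complete argument: for $\omega^{(s)}$ and $n$ with $s_{n}=s$, the window $[-2^{n+2}+1,\,2^{n+2}-1]$ reads $(p^{(n)}s_{n})^{3}p^{(n)}$, so the three identical blocks of length $q=2^{n+1}$ occupy $[-2q+1,\,q]$ with the origin at the right end of the middle block; note that this is the mirror image of the usual normalisation $[-q+1,\,2q]$, so you need the reflected (two-sided) form of Gordon's lemma -- harmless, since a putative $\ell^{2}$ eigenfunction decays at both ends -- and that $f,g$ depend on the two-letter window $\omega_{0}\omega_{1}$, so word-periodicity yields periodicity of the operator data on a window one site shorter, again harmless. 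The identification of $\phi^{-1}(G\cdot 1^{\infty})$ with the $\psi$-images of the orbits of $\omega^{(x)},\omega^{(y)},\omega^{(z)}$ deserves one explicit line (compute $\graph(\omega^{(s)})$ and compare with the description of the three glued families in Section \ref{Section-Schreier}), but this is straightforward.

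For part (a), however, what you wrote is not a proof, and you say so yourself: the decisive step is that for $\lambda$-almost every $\omega$ the period-$2^{m+1}$ cube sits, for infinitely many $m$, in a position relative to the origin where Gordon's lemma bites. Bounded-gap occurrence (minimality, linear repetitivity) cannot give this for every $\omega$ -- that is precisely why the conclusion is only almost sure -- and you defer the anchoring argument to \cite{GLN}. The natural way to fill it is through the $n$-partition maps $P^{(n)}$ of Theorem \ref{theorem-n-partition} (equivalently, the odometer coordinates of $\pi^{\max}(\omega)$): one shows by a Borel--Cantelli or ergodicity argument on $(\ZZ_2,A_2)$ that for $\lambda$-a.e.\ $\omega$ the position $P^{(m)}(\omega)$ and the derived letters $r^{(m)}$ near the origin produce the three-block configuration at infinitely many scales, after which the computation you did in (b) applies verbatim. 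Since the paper itself only cites \cite{GLN} here, your proposal is at the same level of completeness as the text it accompanies; but judged as a standalone proof, this anchoring step is the one genuine gap, while the remaining ingredients (the unitary equivalences, the push-forward of the null set via $(\phi\circ\psi)_{*}\lambda=\mu$, measurability of the eigenvalue set) are indeed routine.
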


\begin{Remark} The considerations of this section are concerned with the case $(t,u,v,w)\in \mathcal{P}$.
For  $(t,u,v,w)\notin\mathcal{P}$ the operators in question can be
decomposed as a sum of finitely many different finite dimensional
operators each appearing with infinite multiplicity. Thus, the
spectrum is pure point with finitely many eigenvalues each with
infinite multiplicity.
\end{Remark}

\section{Integrated density of states  and  Kesten-von-Neumann-Serre spectral measure} \label{IDS-Schreier}
In this section we  show that the Kesten-von-Neumann-Serre spectral
measure actually agrees with the integrated density of states.  This
result is folklore and certainly known to experts. Also, a recent
discussion of a somewhat more general setting can also be found in
\cite{ATV}. Here, we give a direct reasoning for the case at hand
based on our approach.

\bigskip

There is one more representation studied in the context of spectral
approximation. This representation comes from the action of $G$ on
the $n$-th level of the tree. This action clearly preserves the
counting measure on the finite set of vertices of the $n$-th level.
It hence gives rise to a representation $\varrho_n$ of $G$ on the
finite dimensional
 $\ell^2 (V(\varGamma_n))$. For $t,u,v,w\in \RR$ we then set
 $$M_n (t,u,v,w) := M_{\varrho_n} (t,u,v,w) =t \varrho_n (a)  + u  \varrho_n (b)  + v \varrho_n (c) + w \varrho_n (d).$$

To each such $M_n (t,u,v,w)$ we associate the \textit{spectral
distribution} which is the measure $\mu_n (t,u,v,w)$ on $\RR$ given
by
$$\mu_n (t,u,v,w) :=\frac{1}{|V(\varGamma_n)|} \sum_{E} \delta_E,$$
where the sum runs over  eigenvalues $E$ of $M_n (t,u,v,w)$ counted
with multiplicities. In the case $ u = v = w$ it  is shown in
\cite{GZ3} that the measures $\mu_n$, $n\in\NN$, converge weakly and
the limiting measure
$$\mu_\infty(t,u,v,w) = \lim_{n\to \infty}  \mu_n (t,u,v,w)$$
is called the \textit{Kesten-von-Neumann-Serre spectral measure}
there. Next we will show that the limiting measure exists for any
values of the parameters $t,u,v,w$ and coincides with the so-called
\textit{integrated density of states} of the associated
Schr\"odinger operators.

For given $(t,u,v,w)\in\RR^4$ we chose  the functions $f,g$ as in
the previous sections and let $H_\omega$, $\omega\in\varOmega_\tau$,
be the associated Schr\"odinger operators.

\begin{Theorem}\label{ids-equal-KvNS-trace} Let $(t,u,v,w)\in\RR^4$ be given.
For $n\in\NN$, let $M_n (t,u,v,w)$ be  the corresponding operator on
$\varGamma_n$ and $\mu_n$ its spectral distribution. Then, the
sequence of measures  $(\mu_n (t,u,v,w))_n$ converges weakly towards
the integrated density of states $k$ of $(H_\omega)$. In particular,
the integrated density of states of the $(H_\omega)$ agrees with the
Kesten-von-Neumann-Serre spectral measure.
\end{Theorem}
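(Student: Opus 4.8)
The plan is to realise each finite operator $M_n(t,u,v,w)$ as a perturbation of rank bounded independently of $n$ of a finite truncation of one of the Schr\"odinger operators $H_\omega$, $\omega\in\varOmega_\tau$, and then to feed this into the finite--volume approximation results already recorded in Section~\ref{basic-schroedinger} --- namely Theorem~\ref{theorem-convergence-ids} and Corollary~\ref{perturbation-ids} --- which are applicable because $(\varOmega_\tau,T)$ is minimal and uniquely ergodic by Theorem~\ref{Theorem-tau-linear-repetitive}.

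The first and main step is a combinatorial identification. Recall that $\varGamma_n$ has $2^n$ vertices, that $G$ acts transitively on the $n$-th level, and that $\varGamma_n$ has a linear (path) shape. Reading its labels along the path and using the encoding $x\leftrightarrow(b,c)$, $y\leftrightarrow(b,d)$, $z\leftrightarrow(c,d)$ fixed before Theorem~\ref{Main-connection}, the word of consecutive edge--labels of $\varGamma_n$ is exactly $p^{(n-1)}=\tau^{n-1}(a)$, a word of length $2^n-1$ in $\mbox{Sub}_\tau$. Since every non-$a$ letter of $\eta$ is isolated and surrounded by $a$'s, each interior vertex of $\varGamma_n$ is incident to exactly one $a$-edge and carries exactly one loop, so --- by the very construction behind Proposition~\ref{unitary-equivalence} --- the matrix of $M_n(t,u,v,w)$ in the path basis agrees in the bulk with that of $H_{\omega}$ for the functions $f,g$ of Section~\ref{Section-Spectral}; the graph $\varGamma_n$ differs from the corresponding window of $\graph(\omega)$ only in that its two extremal vertices carry a few extra loops (so that every vertex still has exactly one edge of each colour). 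Because $p^{(n-1)}$ coincides with $\omega^{(x)}_1\cdots\omega^{(x)}_{2^n-1}$ for the fixed special sequence $\omega^{(x)}\in\varOmega_\tau$ and for every $n$, one may use one and the same base point $\omega^{*}:=\omega^{(x)}$ (or a fixed shift of it) for all $n$ and obtain a unitary $U_n:\ell^2(V(\varGamma_n))\to\ell^2(\{1,\dots,2^n\})$ with
$$U_n\, M_n(t,u,v,w)\, U_n^{*}=H_{\omega^{*}}^{2^n}+C_n,$$
where $H_{\omega^{*}}^{2^n}$ is the truncation from Section~\ref{basic-schroedinger} and $C_n$ is a selfadjoint operator supported in an $n$-independent neighbourhood of the two endpoints of the path; in particular $\rank(C_n)\le c_0$ for an absolute constant $c_0$, so $\frac{1}{2^n}\rank(C_n)\to0$.

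Granting this, the conclusion follows at once. By the diagonalisation/trace identity recalled in Section~\ref{basic-schroedinger}, the distribution function of $\mu_n(t,u,v,w)$ equals $E\mapsto\frac{1}{2^n}\,\#\{\text{eigenvalues of }U_nM_nU_n^{*}\le E\}$, that is, $\mu_n(t,u,v,w)$ is exactly the measure $\widetilde{k}^{2^n}_{\omega^{*}}$ attached to $\widetilde{H}^{2^n}_{\omega^{*}}=H^{2^n}_{\omega^{*}}+C_n$ (extending the definition by $C_N:=0$ for $N$ not a power of two, so that the hypothesis $\frac{1}{N}\rank(C_N)\to0$ of Corollary~\ref{perturbation-ids} holds). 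Applying Corollary~\ref{perturbation-ids} with the base point $\omega^{*}$ and letting $N=2^n\to\infty$ gives that $\widetilde{k}^{2^n}_{\omega^{*}}$ converges weakly to the integrated density of states $k$ of $(H_\omega)$. Hence $\mu_n(t,u,v,w)\to k$ weakly, for arbitrary $(t,u,v,w)\in\RR^4$. For the ``in particular'': when $u=v=w$ the weak limit of $(\mu_n)$ is by definition the Kesten--von-Neumann--Serre spectral measure of \cite{GZ3}, so that measure is $k$; for general parameters the argument shows the limit exists and equals $k$, so one takes this common limit as the definition of the Kesten--von-Neumann--Serre measure in the anisotropic case.

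I expect the genuine work to lie entirely in the first step: writing $U_n$ down explicitly (matching the path order and orientation of $\varGamma_n$ with $\{1,\dots,2^n\}$, and handling the one-step index shift inherent in the convention $(H_\omega u)(n)=f(T^n\omega)u(n-1)+f(T^{n+1}\omega)u(n+1)+g(T^n\omega)u(n)$), identifying precisely which few matrix entries near the two ends of the path must be corrected, and thereby certifying the uniform bound $\rank(C_n)\le c_0$. Everything after that is a direct appeal to Theorem~\ref{theorem-convergence-ids} and Corollary~\ref{perturbation-ids}; note in particular that no non-vanishing hypothesis on $f$ enters here, so the statement indeed holds for every $(t,u,v,w)\in\RR^4$.
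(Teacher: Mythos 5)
Your proposal is correct and follows essentially the same route as the paper: the paper likewise realises $M_n(t,u,v,w)$, up to a finite-rank (at most $6$) boundary perturbation, as the restriction of $H_{\omega^{(x)}}$ to the window $[1,|\tau^n(a)|]$ of $\graph(\omega^{(x)})$, and then invokes Corollary \ref{perturbation-ids}. Your additional bookkeeping (explicit unitary $U_n$, setting $C_N=0$ for $N$ not of the form $2^n$, and the remark on the anisotropic definition of the Kesten--von-Neumann--Serre measure) only spells out details the paper leaves implicit.
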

\begin{proof}
The graph arising from restricting $\graph (\omega^{(x)})$ to the
vertex set $[1,|\tau^n (a)|]$ and the graph $\varGamma_n$ differ at
most in $6$ loops at the ends (as is clear from the definitions).
Thus, a simple variant of the argument in Proposition
\ref{unitary-equivalence} shows  that the restriction of the
operator $H_{\omega^{(x)}}$ to $[1,|\tau^n (a)|]$ is a perturbation
of $M_n (t,u,v,w)$  of rank at most $6$. Now, Corollary
\ref{perturbation-ids} gives the convergence of the  $\mu_n
(t,u,v,w)$ towards the integrated density of states of $(H_\omega)$.
\end{proof}

A few comments on this result are in order. They are gathered in the
next remark.

\begin{Remark}
\begin{itemize}

\item Denote the $\xi$-independent spectrum of the operators $M_\xi
(t,u,v,w)$ by $\Sigma (t,u,v,w)$  and the spectrum of $M_n
(t,u,v,w)$ by $\Sigma_n (t,u,v,w)$. Clearly,  the support of $\mu_n$
is given by $\Sigma_n (t,u,v,w)$. By Theorem
\ref{theorem-support-and-atomfree} support of the integrated density
of states $k$ is given by $\Sigma (t,u,v,w)$. Thus, the previous
theorem gives in a certain and very weak sense the convergence of
the $\Sigma_n (t,u,v,w)$ towards $\Sigma (t,u,v,w)$. In particular,
there is an inclusion of spectra as shown in   Corollary
\ref{Cor-inclusion}.

In the case at hand, however, convergence of the $\Sigma_n
(t,u,v,w)$ towards $\Sigma (t,u,v,w)$ holds in a much stronger
sense. More precisely, the results of \cite{BG} give the following:
\begin{itemize}
\item[(a)] $\Sigma_n (t,u,v,w)\subset \Sigma_{n+1} (t,u,v,w)$ for all $n\in\NN$.
\item[(b)] $\Sigma (t,u,v,w) =\overline{\cup_n \Sigma_n (t,u,v,w)}$.
\end{itemize}
These are rather remarkable features and not at all true for
approximation of the integrated density of states of Schr\"odinger
operators in other cases.

Let us note, however, that there are recent results on approximation
of spectra of minimal subshifts  with respect to the hausdorff
distance by suitable periodic approximations \cite{BBdN}.

\item It is also worth pointing out that the approximation of the
spectra is done 'from below' i.e. via unions. This is in contrast to
other approximation schemes used in the investigations of
Schr\"odinger operators, where the approximation is done 'from
above' i.e. via intersections.

\item The feature presented in the preceding two points also explain
that the spectrum of $M_\pi (t,u,v,w)$ agrees with $\Sigma
(t,u,v,w)$. The reason is simply  that the representation $\pi$
decomposes as a sum of the representations $\pi_n$ (as shown in
\cite{BG}). Accordingly, $M_\pi (t,u,v,w)$ is a sum of the finite
dimensional operators $M_n (t,u,v,w)$, $n\in \NN$, and its spectrum
is then given as the closure of the union of the spectra of the $M_n
(t,u,v,w)$.

\end{itemize}
\end{Remark}

\end{document}